\pdfoutput=1

\documentclass[a4paper,11pt,titlepage=false]{scrartcl}

\usepackage{microtype}
\usepackage[utf8]{inputenc}
\usepackage[backend=bibtex,giveninits=true,urldate=long,maxbibnames=99]{biblatex}
\usepackage{mathtools}
\usepackage{amsthm}
\usepackage{amssymb}
\usepackage{tikz-cd}
\usepackage[colorlinks=true,linkcolor=blue]{hyperref}
\usepackage{cleveref}
\usepackage{comment}
\usepackage{stmaryrd}
\usepackage[new]{old-arrows}
\usepackage[automark]{scrlayer-scrpage}
\usepackage{csquotes}

\emergencystretch=1em
\binoppenalty = \maxdimen
\relpenalty = \maxdimen

% no unplaned surprises with \cite
\DeclareFieldFormat{postnote}{#1}
\DeclareFieldFormat{multipostnote}{#1}

\DeclareFieldFormat[article,unpublished,incollection,thesis]{citetitle}{\mkbibitalic{#1}\isdot}
\DeclareFieldFormat[article,unpublished,incollection,thesis]{title}{\mkbibitalic{#1}\isdot}
\DeclareFieldFormat[article,book,collection,incollection]{number}{\mkbibbold{#1}}
\DeclareFieldFormat[article,collection,incollection]{volume}{\mkbibbold{#1}}
\DeclareFieldFormat[book]{pagetotal}{#1~\ppno}

\hyphenation{orbi-fold orbi-folds}

\newtheorem{lemma}{Lemma}[section]
\newtheorem{proposition}[lemma]{Proposition}
\newtheorem{theorem}[lemma]{Theorem}

\newtheorem{corollary}[lemma]{Corollary}
\theoremstyle{definition}
\newtheorem{construction}[lemma]{Construction}
\newtheorem{definition}[lemma]{Definition}
\newtheorem{notation}[lemma]{Notation}
\newtheorem{remark}[lemma]{Remark}
\newtheorem{example}[lemma]{Example}

\DeclareMathOperator{\Ab}{\mathbf{Ab}}
\DeclareMathOperator{\Cell}{Ch}
\DeclareMathOperator{\coeq}{coeq}
\DeclareMathOperator{\cof}{cof}
\DeclareMathOperator*{\colim}{colim}

\DeclareMathOperator{\ev}{ev}
\DeclareMathOperator{\fib}{fib}
\DeclareMathOperator{\Fr}{Fr}
\DeclareMathOperator{\Fun}{Fun}

\DeclareMathOperator{\ho}{ho}
\DeclareMathOperator*{\holim}{holim}
\DeclareMathOperator{\Hom}{Hom}
\DeclareMathOperator{\LieGrpd}{\mathbf{LieGrpd}}
\DeclareMathOperator{\id}{id}
\DeclareMathOperator{\incl}{incl}
\DeclareMathOperator{\im}{im}
\DeclareMathOperator{\KT}{\mathbf{K}}
\DeclareMathOperator{\KU}{\mathbf{KU}}
\DeclareMathOperator{\map}{map}

\DeclareMathOperator{\Mfld}{\textbf{Mfld}}
\DeclareMathOperator{\nerve}{N}

\DeclareMathOperator{\Orb}{\mathbf{Orb}}
\DeclareMathOperator{\Orbfld}{\mathbf{Orbfld}}
\DeclareMathOperator{\op}{op}
\DeclareMathOperator{\Repl}{R}
\DeclareMathOperator{\RU}{\mathbf{RU}}
\DeclareMathOperator{\sh}{sh}

\DeclareMathOperator{\SL}{SL}
\DeclareMathOperator{\Sp}{\mathbf{Sp}}
\DeclareMathOperator{\spa}{span}
\DeclareMathOperator{\Spc}{\mathbf{Spc}}

\DeclareMathOperator{\Top}{\mathbf{T}}
\DeclareMathOperator{\TopGrpd}{\mathbf{TGrpd}}

\frenchspacing

\newcommand{\dash}{\textrm{-}}
\newcommand{\defset}[2]{\left\{#1 \mid #2 \right\}}
\newcommand{\B}{\mathbb{B}}
\newcommand{\Cent}{C}

% without subscript
\newcommand{\dlimwithout}{\lim\nolimits^1} 
% with subscript, text style
\newcommand{\dlimwithinline}[2]{ \lim\nolimits^1_{#2} }
% with subscript, display style
\newcommand{\dlimwithdisplay}[2]{ \mathop{\lim\nolimits^1}_{#2 \phantom{1}} } 
% with subscript, automatically choose right style
\newcommand{\dlimwith}[2]
{
	\mathchoice
	{\dlimwithdisplay_{#2}} % display style
	{\dlimwithinline_{#2}}  % text style
	{\dlimwithinline_{#2}}  % script style
	{\dlimwithinline_{#2}}  % scriptscript style
}
% general, automatically chose if subscript or not
\makeatletter
\newcommand{\dlim}{
	\@ifnextchar{_}
	{\dlimwith}
	{\dlimwithout}
}
\makeatother

\newcommand{\EC}{E}
\newcommand{\fin}{\mathcal{F}in}
\newcommand{\G}{\mathbb{G}}
\newcommand{\Leftrightarrows}{%
	\mathrel{%
		\offinterlineskip
		\vcenter{%
			\hbox{$\longrightarrow$}
			\vskip 0.1ex 
			\hbox{$\longleftarrow$}
		}
	}
}
\newcommand{\longrightrightarrows}{%
	\mathrel{%
		\offinterlineskip
		\vcenter{%
			\hbox{$\longrightarrow$}
			\vskip 0.1ex 
			\hbox{$\longrightarrow$}
		}
	}
}
\newcommand{\GH}{\mathcal{GH}}
\newcommand{\gmap}{\mathbb{M}\mathrm{ap}}
\newcommand{\gq}{\hspace{-2.5pt}\sslash\hspace{-1.5pt}}
\newcommand{\HB}{\mathbb{H}}

\newcommand{\LD}{\mathbb{L}}

\newcommand{\LI}{\mathcal L}
\newcommand{\effli}{{\mathcal{L}}_{\operatorname{eff}}}
\newcommand{\lieli}{{\mathcal{L}}_{\operatorname{Lie}}}
\newcommand{\LB}{\mathbf{L}}

\newcommand{\noloc}{%
	\nobreak
	\mskip6mu plus1mu
	\mathpunct{}%
	\nonscript
	\mkern-\thinmuskip 
	{:}%
	\mskip2mu
	\relax
}
\newcommand{\OC}{\mathbf{O}}
\newcommand{\OG}{O}
\newcommand{\Ogl}{{\mathbf{O}_{\operatorname{gl}}}}
\newcommand{\Or}[1]{\mathcal O_{#1}}
\newcommand{\Q}{\mathbb{Q}}
\newcommand{\pr}{\mathrm{pr}}
\newcommand{\R}{\mathbb R}
\newcommand{\RD}{\mathbb{R}}
\newcommand{\setdef}[2]{\left\{#1 \,\middle|\, #2\right\}}
\newcommand{\SH}{\mathcal{SH}}

\newcommand{\SO}{SO}
\newcommand{\SMC}{\mathcal M}
\newcommand{\T}{T}
\newcommand{\tg}{e}
\newcommand{\todo}[1]{\marginpar{#1}}
\newcommand{\UF}{U}
\newcommand{\Z}{\mathbb{Z}}

\newcommand{\defn}[1]{\emph{#1}}
\newcommand\res[2]{{% we make the whole thing an ordinary symbol
		\left.\kern-\nulldelimiterspace % automatically resize the bar with \right
		#1 % the function
		\vphantom{\big|} % pretend it's a little taller at normal size
		\right|_{#2} % this is the delimiter
}}

\newcommand{\LLO}{L}
\newcommand{\RLO}{R}
\bibliography{references.bib}

\setuptoc{toc}{leveldown}

\title{Orbifolds, Orbispaces and \linebreak Global Homotopy Theory}
\author{Branko Juran}

\begin{document}
\maketitle
\begin{abstract}
	Given an orbifold, we construct an orthogonal spectrum representing its stable global homotopy type. Orthogonal spectra now represent orbifold cohomology theories which automatically satisfy certain properties as additivity and the existence of Mayer-Vietoris sequences. Moreover, the value at a global quotient orbifold $M \gq G$ can be identified with the $G$-equivariant cohomology of the manifold $M$.
	Examples of orbifold cohomology theories which are represented by an orthogonal spectrum include Borel and Bredon cohomology theories and orbifold $K$-theory.
	This also implies that these cohomology groups are independent of the presentation of an orbifold as a global quotient orbifold. 
\end{abstract}
\tableofcontents
\section{Introduction}
\setcounter{secnumdepth}{2}
The purpose of this paper is to use global homotopy theory for studying the homotopy theory of orbifolds as suggested by Schwede in \cite[Preface]{sch18}.

Orbifolds are objects from geometric topology which model spaces that locally arise as quotients of smooth manifolds under smooth group actions. Special cases are \defn{global quotient orbifolds} $M \gq G$ of a smooth manifold $M$ under a smooth action of a compact Lie group~$G$. 

Orbifolds were first defined by Satake \cite{sat56} who called them $V$-manifolds. 
The definition is geometric and generalizes the definition of a manifold by endowing the euclidean charts with a finite group action. This notion was later used and popularized by Thurston \cite{thu79}, who started calling them orbifolds and used them to study three-manifolds. 
In a more general approach, one defines orbifolds as special kind of Lie groupoids. 
Connections between groupoids and orbifold were first studied by Haefliger \cite{hae82}. The definition which is used nowadays is due to Moerdijk and Pronk \cite{mp97}. 

It is essential that an orbifold carries more information than just the underlying topological quotient space: It also keeps track of the group actions involved. The homotopy type of a global quotient orbifold~$M \gq G$ should relate to the $G$-equivariant homotopy type of the underlying manifold $M$. For example, the cohomology of $M \gq G$ is usually defined as the $G$-equivariant cohomology of $M$. However, the group $G$ may vary for different orbifolds and two quotients can represent the same orbifold, e.g for $H$ a subgroup of $G$, the quotients $M \gq H$ and $(G \times_H M) \gq G$ are equivalent as orbifolds.
The presentation of an orbifold $X$ as a global quotient orbifold $M \gq G$ hence is non-instrinsic data of the orbifold $X$. However, an orbifold cohomology theory should capture the $G$-equivariant behaviour of the orbifold $X$. 
A conceptual treatment of \defn{orbifold cohomology theories} must necessarily pay attention to the interplay between different equivariant cohomology theories. This leads us to global homotopy theory. 

Global homotopy theory studies homotopy types with simultaneous, compatible group actions of all compact Lie groups. A particular framework for stable global homotopy theory was developed by Schwede in his book \cite{sch18}. He uses the category of orthogonal spectra together with \defn{global equivalences}. The notion of global equivalences is much more restrictive than the usual one of stable equivalences between orthogonal spectra. Two orthogonal spectra are globally equivalent if they represent the same \defn{global homotopy type} and not just the same non-equivariant homotopy type. Localizing the category of orthogonal spectra at the global equivalences yields the \defn{stable global homotopy category}~$\GH$. There is a functor~$\GH \to G\dash\SH$ from the stable global homotopy category to the $G$-equivariant stable homotopy category for any Lie group $G$ which assigns the \defn{underlying $G$-equivariant homotopy type} to a global homotopy type. 

Similarly, the category of orthogonal spaces together with global equivalences of orthogonal spaces models unstable global homotopy types. Orthogonal spectra, i.e. stable global homotopy types, represent cohomology theories on these orthogonal spaces.
We define a functor from orbifolds to orthogonal spaces.
Using this functor, an orthogonal spectrum does also represent a cohomology theory on orbifolds.

This functor sends a global quotient orbifold $M \gq G$ to the so-called semifree orthogonal space $\LB_{G,V}M$ which satisfies the desired property: For $E$ any orthogonal spectrum, the cohomology group $E^k(\LB_{G,V}M)$ actually bijects with $E_G^k(M)$, the $G$-equivariant cohomology of $M$ represented by the underlying $G$-equivariant homotopy type $E_G$ of~$E$. Moreover, our construction sends equivalences of orbifolds to global equivalences of orthogonal spaces. It immediately follows that orbifold cohomology groups are independent of the presentation of the orbifold as a global quotient orbifold. We also verify other properties of these cohomology theories including  additivity and the existence of Mayer-Vietoris sequences and Atiyah-Hirzebruch spectral sequences. 

Examples of equivariant cohomology theories which are represented by orthogonal spectra in the global sense include equivariant $K$-theory, Borel cohomology theories and Bredon cohomology theories. More precisely, the underlying $G$-equivariant homotopy types of these spectra represent the respective $G$-equivariant cohomology for all compact Lie groups $G$. These cohomology theories hence are examples of \defn{orbifold cohomology theories} in our sense. Particularly, we show that Bredon cohomology extends to a well-behaved cohomology theory on all orbifolds. These examples are discussed in greater detail in \Cref{htorb}.
\addtocontents{toc}{\protect\setcounter{tocdepth}{1}}
\subsection*{Organization of the paper}
In \Cref{orbifolds} we recall two definitions of orbifolds, \defn{effective orbifolds} as defined by Satake and the more general concept of an orbifold as a Lie groupoid. In \Cref{orbispaces}, we recall two definitions of model categories of orbispaces: the category $\LI\dash\Top$ as defined by Schwede \cite{sch19} and the category $\Top_{\Orb}$ which is due to Gepner and Henriques \cite{gh07}. Moreover, we recall a result by Körschgen \cite{kor16} establishing a zig-zag of Quillen equivalences between these two model categories. We establish two constructions for associating an orbispace to an orbifold: In \Cref{effli}, we use effective orbifolds and the the category $\LI\dash\Top$ for a non-functorial but easily computable construction. In \Cref{grouporborb}, we define a functor from orbifolds to the category $\Top_{\Orb}$. We will use the zig-zag of Quillen equivalences to extend this functor to a functor to the category $\LI\dash\Top$ in \Cref{comparison2}. Finally, we associate an orthogonal space to an orbifold and define what an \defn{orbifold cohomology theory} is in \Cref{SGHB}. 
\subsection*{Acknowledgment} This paper arose from the author’s Bachelor's thesis supervised by Stefan Schwede at the University of Bonn. I would like to thank Stefan Schwede for the suggestion of the topic and his advice and support while and after writing it. I am grateful to Benjamin Böhme for co-correcting my thesis. Moreover, I wish to thank Thorben Kastenholz for various helpful discussions on the topics of this paper. I am indebted to Urs Flock, Jan Jendrysiak, Christian Kremer, Sebastian Meyer and Jendrik Stelzner for proofreading an earlier version of this paper. 
\subsection*{Conventions}
Throughout this paper, $\Top$ denotes the category of compactly generated, weak Hausdorff spaces together with continuous maps, as defined in \cite{str09}. This definition was first used by McCord in his paper \cite{mcc69}. Other sources for details on this topic are \cite[Sec. 7.9]{tom08}, \cite[App.~A]{lew78} and \cite[App.~A]{sch18}. A \emph{space} is always assumed to be an object of $\Top$. 
Moreover, $\Top_\ast$ denotes the category of based, compactly generated, weak Hausdorff spaces together with based continuous maps.

Let $G$ be a topological group, i.e. a group object in $\Top$.
We write~$G\dash\Top$ for the category of (left) $G$-spaces together with $G$-equivariant continuous maps. Moreover,~$G\dash\Top_\ast$ denotes the category of based $G$-spaces together with based~$G$-equivariant continuous maps.

A \defn{manifold} is always assumed to be a smooth manifold which is second-countable and Hausdorff. The category of smooth manifold together with smooth maps is denoted by $\Mfld$. We require all occurring group actions of Lie groups on manifolds to be smooth.
\addtocontents{toc}{\protect\setcounter{tocdepth}{2}}
\section{Orbifolds} \label{orbifolds}
Firstly, we recall the geometric definition of an effective orbifold in \Cref{efforb}. Afterwards, we explain the definition of an orbifold via Lie groupoids in \Cref{grouporb}.

In \Cref{grpdtoeff} we explain how orbifolds can be seen as a generalization of effective orbifolds. Nevertheless, we want to warn the reader that we usually do not consider \enquote{effective} as a property of an orbifold; \enquote{effective orbifolds} should be seen as a different kind of objects than \enquote{orbifolds}.
\subsection{Effective Orbifolds} \label{efforb}
The notion of an effective orbifold historically was the very first definition of orbifolds and is due to Satake \cite{sat56}.
In the following section we discuss some classical constructions and results. 

Recall that we require manifolds to be smooth manifolds and Lie group actions on manifolds to be smooth.
The following definitions are taken from \cite[Sec. 1.1]{alr07} where the interested reader may also find more details on this topic. 

We need to restrict the class of allowed group actions: The action of the group should have a geometric impact on the quotient space, e.g. we don't want to allow a trivial action of a non-trivial group.
\begin{definition}[Effective]
	A group action of a group $G$ on a space $A$ is called \defn{effective} or \defn{faithful} if $g.x=x$ for all points $x$ in $A$ implies that $g$ is the neutral element of $G$. 
\end{definition}
\begin{definition}[Orbifold chart, orbifold atlas, refinement, equivalent charts] \label{definitionefforbifold} Let $A$ be a space and let $n$ be a non-negative integer. 
	
	An \defn{$n$-dimensional orbifold chart} on $A$ is a triple $(U,G,\varphi)$ where $G$ is a finite group, $U$ is a connected open subset of $\R^n$ together with a smooth, effective $G$-action on $U$ and~$\varphi \colon U \to A$ is a $G$-invariant continuous map such that the induced map $U/G \to A$ is a homeomorphism onto an open subset of $A$.
	
	An \defn{embedding} $\lambda \colon (U_1,G_1,\varphi_1) \hookrightarrow (U_2,G_2,\varphi_2)$ between two $n$-dimensional orbifold charts is a smooth embedding $\lambda \colon U_1 \to U_2$ such that $\varphi_2 \lambda = \varphi_1$.
	
	An $n$-dimensional \defn{orbifold atlas} on a space $A$ is a set $\mathcal U$ of $n$-dimensional orbifold charts on $A$ covering $A$ such that the following holds: For every pair of orbifold charts~$(U_1,G_1,\varphi_1)$ and $(U_2,G_2,\varphi_2)$ in $\mathcal U$ and every point $x$ in $\varphi_1(U_1) \cap \varphi_2(U_2)$, there is a third orbifold chart $(U_3,G_3,\varphi_3)$ in $\mathcal U$ such that $\varphi_3(U_3)$ contains $x$ and there are em\-bed\-dings~$(U_3,G_3,\varphi_3) \hookrightarrow (U_1,G_1,\varphi_1)$ and~$(U_3,G_3,\varphi_3) \hookrightarrow (U_2,G_2,\varphi_2)$.
	
	An orbifold atlas $\mathcal U$ on $A$ is a \defn{refinement} of another orbifold atlas $\mathcal V$ on $A$ if for every orbifold chart $(U,G,\varphi)$ in  $\mathcal U$ there is an orbifold chart in $(V,H,\psi)$ in $\mathcal V$ and an embedding~$(U, G,\varphi) \hookrightarrow (V,H,\psi)$.
	
	Two orbifold charts are said to be \defn{equivalent} if they admit a common refinement.
\end{definition}
\begin{definition}[Effective orbifold]
	An \defn{effective orbifold} of dimension $n$ is a tuple~$\chi=(A,[\mathcal U])$ where $A$ is a second-countable, Hausdorff space and $[\mathcal U]$ is an equivalence class of $n$-dimensional orbifold atlases on $A$.
	We write $|\chi|=A$ for the \defn{underlying space} of $\chi$.
\end{definition}
We don't need to require that the underlying space of an effective orbifold is compactly generated and weak Hausdorff. This is automatically true because the underlying space is locally compact. We moreover assumed the underlying space to be second-countable and Hausdorff. This implies that the underlying space is also paracompact.

Every orbifold atlas is contained in a unique maximal orbifold atlas. As for manifolds, we tend to work with maximal orbifold atlases in proofs.
\begin{definition}[Effective orbifold map, diffeomorphism] \label{diffeoeff}
	Let $\chi_1$ and $\chi_2$ be two effective orbifolds with maximal orbifold atlases $\mathcal U_1$ and $\mathcal U_2$.
	A \defn{map} between $\chi_1$ and $\chi_2$ is a continuous map $f \colon |\chi_1| \to |\chi_2|$ between their underlying spaces such that for all points~$x$ in~$|\chi_1|$, there is an orbifold chart $(U_1,G_1,\varphi_1)$ in $\mathcal U_1$ around $x$ and an orbifold chart~$(U_2,G_2,\varphi_2)$ in $\mathcal U_2$ around $f(x)$  such that $f$ lifts to a smooth map $\tilde f \colon U_1 \to U_2$, that is~$\varphi_2 \tilde f = f \varphi_1$.
	
	A \defn{diffeomorphism} of effective orbifolds is a map of effective orbifolds admitting an inverse map of effective orbifolds. Two effective orbifolds are said to be \defn{diffeomorphic} if there is a diffeomorphism between them.
\end{definition}
\begin{example} \label{exampleorbifolds}
	We briefly discuss two examples of effective orbifolds. 
	\begin{itemize}
		\item Every manifold can be seen as an effective orbifold. The charts are the same, but endowed with the trivial group action.
		\item More generally, let $G$ be a finite group which acts effectively on a connected manifold $M$. Then $M/G$ admits an orbifold atlas: For every point $x$ in $M$ we can choose a $G$-invariant euclidean neighborhood of $x$. The pullback of the $G$-action to the euclidean space gives rise to an orbifold chart around $[x] \in M/G$. The resulting effective orbifold is denoted by $M \gq G$.
	\end{itemize}
\end{example}
\begin{comment}
We want to exhibit two technical tools which are helpful when working with orbifold charts; the first being the fact that every embedding between orbifold charts induces an injective group homomorphism. 
\begin{lemma}[{\cite[Prop. A.1]{mp97}}] \label{injgroup2}
Let \[\lambda, \mu \colon (U_1,G,\varphi_1) \longrightarrow (U_2,H,\varphi_2) \] be two embeddings of orbifold charts. There exists a unique $h \in H$ such that $\lambda = h \circ \mu$.
\end{lemma}
\begin{corollary} \label{injgroup} \todo{into 1?}
For every embedding \[\lambda \colon (U_1,G,\varphi_1) \longhookrightarrow (U_2,H,\varphi_2)\] of orbifold charts and every $g \in G$ there exists a unique element $\overline \lambda(g)$ in $H$ such that~$\lambda \circ g = \overline \lambda(g) \circ \lambda$. The assignment $g \mapsto \overline \lambda(g)$ defines an injective group ho\-mo\-mor\-phism~$G \hookrightarrow H$.
\end{corollary}
\begin{proof}
The maps $\lambda \circ g$ and $\lambda$ are two embeddings of orbifold charts and we may apply \Cref{injgroup2} to obtain the element $\overline \lambda(g)$. It is straightforward to see that this defines a group homomorphism. The injectivity follows from the effectiveness condition.
\end{proof}
Secondly, we may choose the orbifold charts to be linear in the following sense.
\begin{definition}[Orthogonal group action]
We say that a group $G$ \defn{acts orthogonally} on $\R^n$ if it acts via elements of $\OG(n)$ on $\R^n$.
\end{definition}
\end{comment}
We want to generalize this construction to quotients of manifolds under compact Lie group actions. We therefore need to study the local behavior of these actions. 
\begin{definition}[Slice, linear slice] \label{defslice}
	Let $G$ be a compact Lie group acting on a manifold $M$ and let $x$ be a point in $M$. Let $G_x$ denote the stabilizer group of $x$. A $G_x$-invariant subset $S$ of $M$ containing $x$ is called a \defn{slice} at~$x$ if the map \begin{align*}
	G \times_{G_x} S & \longrightarrow M \,, \\
	[g,s] & \longmapsto g.s
	\end{align*} is a diffeomorphism onto an open neighborhood of $G.x$ in $M$. 
	
	We say that this slice is \emph{linear} if the $G_x$-space $S$ is $G_x$-equivariantly homeomorphic to an orthogonal $G_x$-space, that is $S \cong \R^n$ via a $G_x$-equivariant homeomorphism where the~$G_x$-action on $\R^n$ is orthogonal.
\end{definition} 
For $M$ a manifold together with an action of a compact Lie group $G$, there exists a linear slice at any point $x$ in $M$, see \cite[Thm. VI.2.4]{bre72}. This implies that we can assume all orbifold charts to be linear, that is an orbifold chart $(U,G,\varphi)$ where $G$ acts orthogonally on $U = \R^n$.

Before defining an effective orbifold structure on a quotient space, we need to restrict the class of allowed compact Lie group actions.
\begin{definition}[Almost free, effective on slices]
	Let $G$ be a group acting on a space~$A$. We say that $G$ acts \defn{almost freely} if the stabilizer group $G_x$ is finite for all $x$ in $A$. 
	
	Moreover, an action of a compact Lie group on a manifold is \defn{effective on slices} if the action of $G_x$ on any linear slice $S$ at any point $x$ in $A$ is effective.
\end{definition}
Group actions which are effectively on slices are in particular effective. Moreover, all linear slices at a point $x$ are $G_x$-equivariantly homeomorphic. Therefore, it is enough to check that the action of $G_x$ on one particular slice is effective.
\begin{construction}[Effective global quotient orbifold] \label{globalquotient}
	Let $M$ be a manifold and let $G$ be a compact Lie group which acts almost freely and effectively on slices on $M$. We define an effective orbifold structure on $M/G$ as follows:
	
	Let $x$ be a point in $M$ and let $S$ be a linear slice at $x$. There is a~$G_x$-equivariant diffeomorphism $\R^n \cong S$ such that $G_x$ acts orthogonally on $\R^n$. Then $(\R^n,G_x,\varphi)$ is an orbifold chart on $M/G$ where~$\varphi \colon \R^n \to M/G$ is induced by the inclusion of $S \cong \R^n$ into $M$. It is proven in \cite[Prop. II.4.7]{bre72} that the induced map $\R^n/G_x \cong S/G_x \to M/G$ is actually a homeomorphism onto the open $G$-invariant subset $G(S)$.
	
	Let $\mathcal U$ be the the set of all these orbifold charts. We call $(M/G,[\mathcal U])$ an \defn{effective global quotient orbifold} and denote it by $M \gq G$.
\end{construction}
We omit the verification that $\mathcal U$ actually is an orbifold atlas on $M/G$. The author is not aware of an explicit reference with a proof of this well-known fact, but one can consult \cite[Sec. 2.4]{mm03} for a slightly different construction using foliation theory.
\begin{remark}
	Although it is sometimes stated differently in the literature, we need to require that the group acts effectively on slices. It is not enough to ask for an effective group action. Despite trivial counterexamples where the group action is trivial on one connected component, there are also more interesting examples: Let $S_3$ be the third permutation group and let $H$ be an order $2$ subgroup of $S_3$. Then the action of~$S_3$ on~$S_3/H$ is almost free and effective. The orbit space of this action is just a single point, but all stabilizers have order $2$. 
	
	One can also construct an example for a connected Lie group by replacing $S_3$ by~$\SO(3)$.
\end{remark}
In fact, every effective orbifold is diffeomorphic to an effective global quotient orbifold, see \Cref{efforbaregq}. We recall an explicit construction for this by defining the manifold of orthonormal frames of an effective orbifold in \Cref{constrframebundle}. %In \Cref{choiceofmetric} we check that this construction is independent of the choice of a Riemannian metric. Finally, we give the proof of the promised result, that is any effective orbifold is diffeomorphic to an effective global quotient orbifold, see \Cref{efforbaregq}.
\begin{definition}[Riemannian metric]
	Let $\chi$ be an effective orbifold with maximal orbifold atlas $\mathcal U = (U_i,G_i,\varphi_i)_{i \in I}$. A \defn{Riemannian metric} on $\chi$ is a collection $(\rho_i)_{i \in I}$ of Riemannian metrics $\rho_i$ on $U_i$ such that every embedding of orbifold charts becomes an isometry.  
\end{definition}
Every effective orbifold admits a Riemannian metric, see \cite[Prop. 2.20]{mm03}. It follows from the $G_i$-invariance of the map $\varphi_i \colon U_i \to |\chi|$ that the Riemannian metric $\rho_i$ on $U_i$ is $G_i$-invariant for all $i \in I$.
%Let $(\rho_i)_{i \in I}$ be a Riemannian metric on an effective orbifold $\chi$ with maximal orbifold atlas $(U_i,G_i,\varphi_i)_{i \in I}$.
%The multiplication with an element $g$ of $G_i$ is a map~$U_i \to U_i$ which commutes with $\varphi_i$ because $\varphi_i$ is $G_i$-invariant. This multiplication map is in particular a self-embedding of $(U_i,G_i,\varphi_i)$. We conclude that the Riemannian metric~$\rho_i$ on~$U_i$ is $G_i$-invariant.
\begin{construction}[Manifold of orthonormal frames] \label{constrframebundle}
	Let $\chi$ be an effective orbifold of dimension $n$ with maximal orbifold atlas $(U_i,G_i,\varphi_i)_{i \in I}$ together with a Riemannian metric on $\chi$.
	We will recall the essential ideas of the construction of the manifold of orthonormal frames from \cite[Sec. 2.4]{mm03}. We just slightly change the definition to obtain a left $\OG(n)$-space, i.e. an orthonormal frame is defined as a linear isometry $\T_xM \to \R^n$ instead of a linear isometry $\R^n \to \T_xM$. 
	
	The manifold of orthonormal frames $\Fr(U_i)$ admits an $(\OG(n) \times G_i)$-action via  \[(A,g).(x,e)=\left(g.x,A \circ e \circ \left(\T_xg \right)^{-1}\right) \] for $g \in G_i$ and $A \in \OG(n)$. 
	%The map $T_xg$ uniquely determines the map $g$ in a neighborhood of $x$ by the exponential map. The effectiveness of the $G_i$-action then implies that~$\T_xg= \id_{\T_xU_i}$ if and only if $g$ is the neutral element of $G$, see \cite[Lem. 2.10]{mm03} for more details. This implies that 
	The restricted $G_i$-action on $\Fr(U_i)$ is free. The quotient~$\Fr(U_i)/G_i$ hence canonically admits the structure of a smooth $\OG(n)$-manifold.
	
	%Let $g \in G$ such that $g.(x,e)=(x,e)$, i.e. $g.x=x$ and $\T_xg = \id_{\T_xM}$. The set of points $y$ in $U_i$ where $\T_yg = \id_{\T_yM}$ and $g.y=y$ is closed. But this set is also open because the differential of the isometry $g$ at the point $y$ uniquely determines the isometry $g$ in a neighborhood of $y$ by the exponential map. We assumed that $U$ is connected. We conclude that $g.x=x$ for all $x \in U$. This implies that $g$ is the neutral element because the $G$-action on $U_i$ is effective.
	
	Every embedding of orbifold charts $\lambda \colon (U_i ,G_i,\varphi_i) \hookrightarrow (U_j,G_j,\varphi_j)$ gives an isometry~$\lambda \colon U_i \hookrightarrow U_j$ which induces a smooth embedding $\hat \lambda \colon \Fr(U_i) \hookrightarrow \Fr(U_j)$ by \[\hat \lambda (x,e) = \left(\lambda(x),e \circ \left(\T_x \lambda\right)^{-1}\right) \] for a frame $(x,e)$ at a point $x$ in $U_i$. 
	\begin{comment}
	Let $g$ be an element in $G_i$. Then \begin{align*}
	\hat \lambda (g.(x,e)) & = \hat \lambda\left(g.x,e \circ \left(\T_{x}g\right)^{-1}\right) \\
	& = \left(\lambda(g.x),e \circ (\T_xg)^{-1} \circ (\T_{g.x}\lambda)^{-1}\right) \\
	& = \left( (\lambda \circ g\circ\lambda^{-1})\lambda(x), e \circ (\T_x\lambda)^{-1} \circ  (\T_{\lambda(x)}(\lambda \circ g \circ \lambda^{-1}))^{-1} \right) \\
	&= (\overline \lambda(g)).\left( \hat \lambda(x,e) \right)
	\end{align*}
	for $\overline \lambda \colon G_i \to G_j$ as defined in \Cref{injgroup}. 
	\end{comment}
	The map $\hat \lambda$ induces an $\OG(n)$-equivariant smooth embedding \[\lambda_\ast \colon \Fr(U_i)/G_i \longrightarrow \Fr(U_j)/G_j \,. \] 
	Gluing together all quotients $Fr(U_i)/G_i$ along these maps yields the \defn{manifold of orthonormal frames} $\Fr(\chi)$ of $\chi$. %by \[\Fr(\chi) = \colim\limits_{i \in I} \Fr(U_i)/G_i \] where $U_i$ runs through all orbifold charts of the maximal orbifold atlas on $\chi$. By the previous discussion, we may see this as a colimit over a poset. Note that $\Fr(U_i)/G_i \hookrightarrow \Fr(\chi)$ is an open inclusion. This endows $\Fr(\chi)$ with the structure of a smooth manifold. 
	%The space $\Fr(\chi)$ is second-countable and Hausdorff. We omit the verifications, but both can be shown by using the same property for $|\chi|$.
	%The $\OG(n)$-actions on $\Fr(U_i)/G_i$ commute with the inclusion maps. This gives rise to a smooth $\OG(n)$-action on $\Fr(\chi)$.
	%Using the homeomorphism of spaces \[\OG(n) \times \colim\limits_{i \in I} \Fr(U_i)/G_i \cong \colim\limits_{i \in I} \left( \OG(n) \times \Fr(U_i)/G_i \right) \] these actions give rise to an $\OG(n)$-action on $\Fr(\chi)$.
\end{construction}
\begin{example}[Manifold of orthonormal frames of $M \gq G$] \label{framebundleofMG}
	Let $M$ be an $m$-dimensional manifold together with an almost free action of a compact $n$-dimensional Lie group $G$ on $M$ which is effective on slices.  
	It is claimed in \cite[Prop. 1.25]{alr07} that~$\Fr(M \gq G)$ is diffeomorphic to $\Fr(M)/G$.
	This does not seem to be fully correct if $G$ is not discrete because the dimensions would not agree. The orbifold structure on $M/G$ just uses slices, that are transversals to the orbits of the group action. We must therefore also replace $\Fr(M)$ by the manifold $\Fr_G(M)$ of orthonormal $(m-n)$-frames which are orthogonal to the orbits to obtain an $\OG(n)$-equivariant diffeomorphism $\Fr(M \gq G) \cong \Fr_G(M)/G$, i.e. there is a commutative diagram: \[\begin{tikzcd}[sep=large]
	\Fr_G(M) \ar[r,"/\OG(m-n)"] \ar[d,"/G"] & M \ar[d,"/G"] \\
	\Fr(M \gq G)  \ar[r,"/\OG(m-n)"] & M/G
	\end{tikzcd} \]
\end{example}
The homotopy type of an effective orbifold $\chi$ often is defined as the $\OG(n)$-equivariant homotopy type of $\Fr(\chi)$ and we will follow this idea in \Cref{effli}. One should hence verify that $\Fr(\chi)$ does not depend on the choice of the Riemannian metric up to $\OG(n)$-equivariant diffeomorphism. The author is not aware of a proof in the literature. Therefore, we explain why the construction for manifolds described in \cite{bg92} behaves well under an additional group action and hence extends to effective orbifolds. 
%\begin{definition}
Let $V$ be a finite-dimensional vector space together with two inner products $\rho$ and~$\sigma$ on $V$. Let $\hat \rho \colon V \to V^\ast$ and $\hat \sigma \colon V \to V^\ast$ denote the isomorphisms induced by $\rho$ and $\sigma$, respectively. We write \[b_\sigma^\rho = (\hat \rho)^{-1} \circ \hat \sigma \colon V \longrightarrow V \,.\]
%\end{definition}
Recall from linear algebra that a symmetric positive definite matrix $A$ has a unique symmetric positive definite square root which will be denoted by $A^{1/2}$. %, see \cite[Thm. 7.2.6]{hj13}. 
This square root is already uniquely determined by the fact that all eigenvalues are positive as can easily be seen by diagonalizing $A$.

We omit the proof of the following linear algebra fact. 
\begin{lemma} \label{forvs}
	Let $V$ be a finite-dimensional vector space together with two inner products $\rho$ and~$\sigma$ on $V$. 
	\begin{comment}
	There exists a unique isomorphism \[b_{\sigma}^{\rho} \colon V \longrightarrow V \] such that \[\rho\left(b_\sigma^\rho v,w \right)=\sigma(v,w)  \] for all $v,w \in V$.
	\end{comment}
	The linear map $b_\sigma^\rho \colon V \to V$ is self-adjoint with respect to $\rho$ and has positive eigenvalues only. Its unique square root with the same properties $(b_{\sigma}^\rho)^{1/2}$ is an isometry from $(V,\sigma)$ to $(V,\rho)$.
\end{lemma}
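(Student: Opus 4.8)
The plan is to reduce everything to the defining identity of $b := b_\sigma^\rho$, namely $\rho(bv,w) = \sigma(v,w)$ for all $v,w \in V$, which is immediate from $b = (\hat\rho)^{-1}\circ\hat\sigma$ together with the definitions of $\hat\rho$ and $\hat\sigma$. First I would observe that $b$ is self-adjoint with respect to $\rho$: for all $v,w \in V$,
\[
\rho(bv,w) = \sigma(v,w) = \sigma(w,v) = \rho(bw,v) = \rho(v,bw),
\]
using the symmetry of $\rho$ and of $\sigma$. The spectral theorem for self-adjoint endomorphisms of a finite-dimensional real inner product space then provides a $\rho$-orthonormal basis of eigenvectors of $b$, with real eigenvalues.

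Second, I would check positivity of these eigenvalues: if $bv = \lambda v$ with $v \neq 0$, then $\lambda\,\rho(v,v) = \rho(bv,v) = \sigma(v,v) > 0$, and since $\rho(v,v) > 0$ this forces $\lambda > 0$. With $\rho$-self-adjointness and positivity in hand, I would construct the square root by passing to coordinates in a $\rho$-orthonormal basis: there $b$ is represented by a symmetric positive definite matrix, so the paragraph preceding the lemma supplies a unique symmetric positive definite square root, which transported back defines an endomorphism $c := (b_\sigma^\rho)^{1/2}$ that is $\rho$-self-adjoint, has positive eigenvalues, and satisfies $c^2 = b$; uniqueness among endomorphisms with these properties follows from the matrix-level uniqueness in the same basis.

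Finally, the isometry statement is a one-line consequence of the $\rho$-self-adjointness of $c$: for all $v,w \in V$,
\[
\rho(cv,cw) = \rho(c^2 v,w) = \rho(bv,w) = \sigma(v,w),
\]
so $c \colon (V,\sigma) \to (V,\rho)$ preserves the inner products; being injective (it preserves a positive definite form) and an endomorphism of a finite-dimensional space, it is an isomorphism, hence an isometry of inner product spaces. I do not expect a genuine obstacle here — it is standard linear algebra — but the one point deserving care is the translation between the coordinate-free endomorphism $b$ and its matrix relative to a $\rho$-orthonormal basis, since it is precisely $\rho$-orthonormality that converts "$\rho$-self-adjoint" into "symmetric matrix" and legitimizes invoking uniqueness of the positive definite square root; and it is precisely the "same properties" hypothesis (that the square root is again $\rho$-self-adjoint) that yields $c^\ast c = c^2$ in the last display.
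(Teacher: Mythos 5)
Your proof is correct and follows essentially the same route as the paper's (the paper omits the argument in the compiled text, but its intended proof proceeds identically: derive the defining identity $\rho(b_\sigma^\rho v,w)=\sigma(v,w)$, deduce $\rho$-self-adjointness and positivity of eigenvalues from symmetry and positive definiteness of $\sigma$, take the unique positive self-adjoint square root, and verify the isometry by the same one-line computation). No gaps.
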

\begin{proposition} \label{formanifolds}
Let $\rho$ and $\sigma$ be two $G$-invariant Riemannian metrics on an open and connected subset $U$ of $\R^n$ together with an effective action of a finite group $G$ on $U$. For a point $x$ in $M$, we write $b_x$ for $b_{\sigma_x}^{\rho_x}$. The map \begin{align*}
B^\rho_\sigma \colon \Fr^\rho(U) & \longrightarrow \Fr^\sigma(U) \\
(x,e) & \longmapsto (x,e \circ b_x^{1/2})
\end{align*}
for $(x,e)$ an orthonormal frame at $x$ with respect to $\rho$ is an $(\OG(n) \times G)$-equivariant diffeomorphism.
\end{proposition}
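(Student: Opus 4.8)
The plan is to check in turn that $B^\rho_\sigma$ is \emph{well-defined}, \emph{smooth}, \emph{bijective with smooth inverse}, and \emph{$(\OG(n)\times G)$-equivariant}; the only inputs that are not purely formal are \Cref{forvs} and the smoothness of the square-root map on positive definite symmetric matrices. For well-definedness, a point of $\Fr^\rho(U)$ over $x$ is by definition a linear isometry $e\colon(\T_xU,\rho_x)\to\R^n$; since \Cref{forvs} says $b_x^{1/2}\colon(\T_xU,\sigma_x)\to(\T_xU,\rho_x)$ is an isometry, the composite $e\circ b_x^{1/2}$ is an isometry $(\T_xU,\sigma_x)\to\R^n$, i.e.\ a point of $\Fr^\sigma(U)$ over $x$, so $B^\rho_\sigma$ is a well-defined bundle map covering $\id_U$.

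For smoothness I would work in a coordinate chart on $U$, where the Gram matrices of $\rho$ and $\sigma$ vary smoothly with $x$; hence $x\mapsto b_x$, which in the notation introduced before \Cref{forvs} is $\hat\rho_x^{-1}\circ\hat\sigma_x$, is smooth with values in the open cone of ($\rho_x$-)self-adjoint positive definite operators. One then invokes the fact that $A\mapsto A^{1/2}$ is smooth --- indeed real-analytic --- on the open set of positive definite symmetric matrices; this follows e.g.\ from the implicit function theorem applied to $B\mapsto B^2$, whose differential $H\mapsto BH+HB$ is invertible when $B$ is positive definite, or from holomorphic functional calculus. Composing with the standard smooth local trivializations of the two frame bundles then shows $B^\rho_\sigma$ is smooth.

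To see $B^\rho_\sigma$ is a diffeomorphism I would exhibit $B^\sigma_\rho$ as its inverse. The defining relations $\rho_x(b_{\sigma_x}^{\rho_x}v,w)=\sigma_x(v,w)$ and $\sigma_x(b_{\rho_x}^{\sigma_x}v,w)=\rho_x(v,w)$ give $b_{\rho_x}^{\sigma_x}=(b_x)^{-1}$, and because the positive square root of an operator is its unique square root with positive spectrum (equivalently, a polynomial in the operator), $(b_{\rho_x}^{\sigma_x})^{1/2}=(b_x^{-1})^{1/2}=(b_x^{1/2})^{-1}$. Hence $B^\sigma_\rho(B^\rho_\sigma(x,e))=(x,\,e\circ b_x^{1/2}\circ(b_x^{1/2})^{-1})=(x,e)$ and symmetrically $B^\rho_\sigma\circ B^\sigma_\rho=\id$; both composites are smooth by the previous paragraph, so $B^\rho_\sigma$ is a diffeomorphism.

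Finally, equivariance. The $\OG(n)$-action is by post-composition with an element of $\OG(n)$ in the frame slot, which commutes with pre-composition by $b_x^{1/2}$, so $\OG(n)$-equivariance is immediate. For the $G$-action $g.(x,e)=(g.x,e\circ(\T_xg)^{-1})$, the point is that $G$-invariance of $\rho$ and $\sigma$ makes $\T_xg$ simultaneously an isometry $(\T_xU,\rho_x)\to(\T_{g.x}U,\rho_{g.x})$ and $(\T_xU,\sigma_x)\to(\T_{g.x}U,\sigma_{g.x})$; feeding this into the defining relation of $b$ yields $(\T_xg)\circ b_x=b_{g.x}\circ(\T_xg)$, whence $(\T_xg)\,b_x^{1/2}\,(\T_xg)^{-1}$ is a positive square root of $b_{g.x}$ and thus equals $b_{g.x}^{1/2}$ by uniqueness. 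Substituting, $B^\rho_\sigma(g.(x,e))=(g.x,\,e\circ(\T_xg)^{-1}\circ b_{g.x}^{1/2})=(g.x,\,e\circ b_x^{1/2}\circ(\T_xg)^{-1})=g.(B^\rho_\sigma(x,e))$, and together with the $\OG(n)$-case this gives $(\OG(n)\times G)$-equivariance. I expect the only real obstacle to be making the smoothness step precise --- that $A\mapsto A^{1/2}$ is smooth and that the pointwise maps $b_x^{1/2}$ assemble into a smooth bundle map --- while the remaining subtleties (the identity $b_{\rho_x}^{\sigma_x}=b_x^{-1}$, uniqueness of the positive square root, and its behaviour under conjugation) are routine linear algebra.
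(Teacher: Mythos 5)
Your proposal is correct and follows essentially the same route as the paper: well-definedness via \Cref{forvs}, $G$-equivariance via the conjugation identity $(\T_xg)^{-1}\circ b_{g.x}\circ \T_xg=b_x$ combined with uniqueness of the positive square root, and smoothness via smooth dependence of $A^{1/2}$ on $A$ in local frames. The only difference is that you additionally spell out the explicit inverse $B^\sigma_\rho$ (using $b_{\rho_x}^{\sigma_x}=b_x^{-1}$), a point the paper leaves implicit.
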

\begin{proof}
The map $B^\rho_\sigma$ is well-defined by \Cref{forvs}. Moreover, the map obviously is $\OG(n)$-equivariant. We verify that it is also $G$-equivariant. %Let $v$ and $w$ be tangent vectors in $\T_xM$. 
First note that the invariance of the metric $\rho$ under the action map of an element $g$ of $G$ at a point $x$ in $U$ can be written as \[ \T^\ast_xg \circ \hat \rho_{g.x} \circ \T_xg = \hat \rho_x \] where $\T^\ast_{x}g \colon \T_{g.x}^\ast U \to \T_{x}^\ast U$ is induced by $\T_xg \colon \T_xU \to \T_{g.x}U$. An analogous statement holds for $\sigma$.
\begin{comment}
\begin{align*} \rho_m\left(\left(\left(\T_{x}g\right)^{-1} \circ b_{g.m} \circ \T_xg\right) v,w\right)={}&\rho_{g.m}\left(\left(b_{g.m} \circ \T_xg) v,(\T_xg)w \right)\right) \\
={}& \sigma_{g.m}\left( (\T_xg)v,(\T_xg)w \right) \\ ={}&\sigma_m(v,w) \\ ={}& \rho_m(b_xv,w) \,. \end{align*}
\end{comment}
We conclude that \begin{align*}\left(\T_{x}g\right)^{-1} \circ b_{g.x} \circ \T_xg ={}& b_x \shortintertext{and hence} \left(\T_{x}g\right)^{-1} \circ b_{g.x}^{1/2} \circ \T_xg ={}& b_x^{1/2} \end{align*} because both sides are square roots of $b_x$ with positive eigenvalues only.
We can finally verify that \begin{align*} g.B_\sigma^\rho(x,e) ={}& \left(g.x,e \circ b_x^{1/2} \circ \left(\T_{x}g\right)^{-1}\right) \\ ={}& \left(g.x,e \circ \left(T_{x}g\right)^{-1} \circ b_{g.x}^{1/2}\right) \\ ={}& B_\sigma^\rho(g.(x,e))  \end{align*} holds true for any frame $(x,e)$ in $\Fr_x(U)$.

One can verify that the map is smooth by choosing local orthonormal frames and using the fact that the unique symmetric positive definite square root depends smoothly on the symmetric positive definite matrix. 
%We check that the map $B_\sigma^\rho$ is smooth, i.e. $b_x^{1/2}$ depends smoothly on $x$. Choose a smooth orthonormal frame $(X^1,\dots , X^n)$ on $U$ with respect to $\rho$ as described in \cite[Cor. 13.8]{lee13}. Let $x$ be a point in $U$. The tangent space $\T_xU$ is now endowed with a $\rho$-orthonormal basis $(X^1_x, \dots X^n_x)$ depending smoothly on $x$. The bilinear form $\sigma_x$ depends smoothly on~$x$. Let $\Sigma_x$ be the matrix of the bilinear form $\sigma_x$ with respect to the orthonormal basis~$(X^1_x, \dots X^n_x)$. Then $\Sigma_x$ also depends smoothly on $x$. The matrix $\Sigma_x$ represents $b_x$ with respect to the basis~$(X^1_x, \dots X^n_x)$. Finally, the unique positive definite square root $( \Sigma_x)^{1/2}$ representing $b_x^{1/2}$ depends smoothly on $\Sigma_x$.
%The map \[B_\rho^\sigma \colon \Fr^\sigma(M)\to \Fr^\rho(M) \] is an inverse diffeomorphism to $B^\rho_\sigma$ because $b_{\sigma_x}^{\rho_x}=(b_{\rho_x}^{\sigma_x})^{-1}$ for any point $x$ in $M$. The inverse of the map~$(b_{\rho_x}^{\sigma_x})^{1/2}$ is a square root of $b_{\sigma_x}^{\rho_x}$ with positive eigenvalues only. We conclude that $\left((b_{\rho_x}^{\sigma_x})^{1/2}\right)^{-1} = (b_{\sigma_x}^{\rho_x})^{1/2}$.
\end{proof}
\begin{theorem} \label{choiceofmetric}
Let $\rho$ and $\sigma$ be two Riemannian metrics on an effective orbifold $\chi$. Let $\Fr^\rho(\chi)$ and $\Fr^\sigma(\chi)$ denote the manifold of orthonormal frames with respect to the metric $\rho$ and $\sigma$, respectively. Then there exists an~$\OG(n)$-equivariant diffeomorphism between $\Fr^\rho(\chi)$ and $\Fr^\sigma(\chi)$.
\end{theorem}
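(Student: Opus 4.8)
The plan is to glue together the fiberwise diffeomorphisms supplied by \Cref{formanifolds}. Fix a maximal orbifold atlas $(U_i,G_i,\varphi_i)_{i\in I}$ of $\chi$ and write $\rho_i,\sigma_i$ for the $G_i$-invariant Riemannian metrics on $U_i$ which are the components of $\rho$ and $\sigma$. For each $i$, \Cref{formanifolds} gives an $(\OG(n)\times G_i)$-equivariant diffeomorphism
\[ B^{\rho_i}_{\sigma_i}\colon \Fr^{\rho_i}(U_i) \longrightarrow \Fr^{\sigma_i}(U_i), \qquad (x,e)\longmapsto \bigl(x,\,e\circ b_x^{1/2}\bigr), \]
where $b_x = b^{\rho_{i,x}}_{\sigma_{i,x}}$. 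Being $G_i$-equivariant, it descends to an $\OG(n)$-equivariant diffeomorphism $\overline{B}_i\colon \Fr^{\rho_i}(U_i)/G_i \to \Fr^{\sigma_i}(U_i)/G_i$ between the building blocks out of which $\Fr^\rho(\chi)$ and $\Fr^\sigma(\chi)$ are assembled in \Cref{constrframebundle}.

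The key step is to verify that the $\overline{B}_i$ are compatible with the transition maps, so that they glue. Let $\lambda\colon (U_i,G_i,\varphi_i)\hookrightarrow (U_j,G_j,\varphi_j)$ be an embedding of orbifold charts. By definition of a Riemannian metric on $\chi$, $\lambda$ is an isometry simultaneously for $\rho_i,\rho_j$ and for $\sigma_i,\sigma_j$, so for each $x\in U_i$ the differential $\T_x\lambda$ intertwines the dual-metric isomorphisms $\hat\rho_{i,x}$ with $\hat\rho_{j,\lambda(x)}$ and $\hat\sigma_{i,x}$ with $\hat\sigma_{j,\lambda(x)}$, exactly as in the $G$-invariance identity used in the proof of \Cref{formanifolds}. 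Hence $(\T_x\lambda)^{-1}\circ b_{\lambda(x)}\circ \T_x\lambda = b_x$, and the same holds for the positive-definite square roots by the uniqueness clause of \Cref{forvs} (conjugation by the isometry $\T_x\lambda$ preserves $\rho$-self-adjointness and positivity of eigenvalues). Substituting into the formula for $\hat\lambda$ from \Cref{constrframebundle} gives
\[ \hat\lambda\bigl(B^{\rho_i}_{\sigma_i}(x,e)\bigr) = \bigl(\lambda(x),\,e\circ b_x^{1/2}\circ (\T_x\lambda)^{-1}\bigr) = \bigl(\lambda(x),\,e\circ (\T_x\lambda)^{-1}\circ b_{\lambda(x)}^{1/2}\bigr) = B^{\rho_j}_{\sigma_j}\bigl(\hat\lambda(x,e)\bigr), \]
i.e. $\hat\lambda\circ B^{\rho_i}_{\sigma_i} = B^{\rho_j}_{\sigma_j}\circ\hat\lambda$. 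Passing to $G_i$- and $G_j$-quotients, this says $\overline{B}_j\circ\lambda_\ast = \lambda_\ast\circ\overline{B}_i$, which is precisely the compatibility with the gluing data.

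It follows that the $\overline{B}_i$ glue to a single $\OG(n)$-equivariant smooth map $B\colon \Fr^\rho(\chi)\to\Fr^\sigma(\chi)$. Exchanging the roles of $\rho$ and $\sigma$ produces an $\OG(n)$-equivariant smooth map in the other direction, and since $b^{\sigma_{i,x}}_{\rho_{i,x}} = \bigl(b^{\rho_{i,x}}_{\sigma_{i,x}}\bigr)^{-1}$ these two maps are fiberwise mutually inverse on every chart, hence so are the glued maps. Thus $B$ is an $\OG(n)$-equivariant diffeomorphism. The only genuine obstacle is the compatibility check in the middle paragraph; once one records that embeddings of orbifold charts are isometries for \emph{both} metrics, the argument reduces to the transformation rule for $b_x$ already established in the proof of \Cref{formanifolds} together with the uniqueness of self-adjoint positive square roots from \Cref{forvs}. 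One should also note that $B$ is well defined independently of how $\Fr^\rho(\chi)$ is presented as a colimit, because $B$ is compatible with all the gluing maps $\lambda_\ast$ at once.
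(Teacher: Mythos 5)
Your proof is correct and follows essentially the same route as the paper: apply \Cref{formanifolds} chartwise, descend to the $G_i$-quotients, and glue. The only difference is that you spell out the compatibility $\hat\lambda\circ B^{\rho_i}_{\sigma_i}=B^{\rho_j}_{\sigma_j}\circ\hat\lambda$ explicitly (via the conjugation identity for $b_x$ and uniqueness of the positive square root), where the paper merely remarks that $B^\rho_\sigma$ depends only on the inner products on the tangent spaces; your version is a welcome elaboration of that step.
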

\begin{proof}
Let $(U,G,\varphi)$ be an orbifold chart of $\chi$. The diffeomorphism \[B_\sigma^\rho \colon \Fr^\rho(U) \longrightarrow \Fr^\sigma(U) \] is $(\OG(n) \times G)$-equivariant by \Cref{formanifolds} and hence induces an $\OG(n)$-equivariant diffeomorphism \[\Fr^\rho(U)/G \longrightarrow \Fr^\sigma(U)/G \,. \] Moreover, the diffeomorphism $B_\sigma^\rho$ just depends on the inner product on each tangent space of $U$. These maps are hence compatible with the embeddings used in \Cref{constrframebundle} of the manifold of orthonormal frames. %\[\lambda_\ast \colon \Fr(U_i)/G_i \longrightarrow \Fr(U_j)/G_j \] induced by isometries $\lambda \colon U_i \to U_j$. 
We obtain a map \[\Fr^\rho(\chi) \longrightarrow \Fr^\sigma(\chi) \] which is an $\OG(n)$-equivariant diffeomorphism.
\end{proof}
The diffeomorphism $\Fr^\sigma(\chi) \to \Fr^\rho(\chi)$ does not depend on any further choices. However, it should not be seen as a canonical choice for such a diffeomorphism: It does not behave well under composition, i.e. in general $ B^\sigma_\tau \circ B_\sigma^\rho$ is not equal to $B^\rho_\tau$ for three Riemannian metrics $\rho,\sigma$ and $\tau$ on $\chi$ because $A^{1/2}B^{1/2}$ is usually not equal to $(AB)^{1/2}$ for two matrices $A$ and $B$. 
\begin{corollary} \label{framebundleindependent}
Let $\chi_1$ and $\chi_2$ be two diffeomorphic effective orbifolds. There is an $\OG(n)$-equivariant diffeomorphism between the manifolds $\Fr(\chi_1)$ and $\Fr(\chi_2)$.
\end{corollary}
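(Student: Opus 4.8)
The plan is to pull back a Riemannian metric along the diffeomorphism and then invoke \Cref{choiceofmetric} to remove the remaining dependence on the chosen metrics. Fix a diffeomorphism $f\colon\chi_1\to\chi_2$ and a Riemannian metric $\sigma=(\sigma_j)_{j\in J}$ on $\chi_2$ (with respect to its maximal atlas $(V_j,H_j,\psi_j)_{j\in J}$), which exists by \cite[Prop. 2.20]{mm03}. I claim that $f$ induces an $\OG(n)$-equivariant diffeomorphism $\Fr^{f^{\ast}\sigma}(\chi_1)\cong\Fr^{\sigma}(\chi_2)$ for a suitable metric $f^{\ast}\sigma$ on $\chi_1$. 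Granting this, picking any Riemannian metric $\rho'$ on $\chi_1$ and applying \Cref{choiceofmetric} gives $\OG(n)$-equivariant diffeomorphisms $\Fr^{\rho'}(\chi_1)\cong\Fr^{f^{\ast}\sigma}(\chi_1)\cong\Fr^{\sigma}(\chi_2)$, which is the assertion, since $\Fr(\chi_i)$ is, by \Cref{constrframebundle} together with \Cref{choiceofmetric}, well defined up to $\OG(n)$-equivariant diffeomorphism.

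The first step is to construct $f^{\ast}\sigma$ on $\chi_1$. Working with the maximal atlas $(U_i,G_i,\varphi_i)_{i\in I}$ of $\chi_1$, I would cover each $U_i$ by open subsets over which $f$ lifts, by \Cref{diffeoeff}, to a smooth map into some chart $V_j$ of $\chi_2$; using that $f^{-1}$ is an orbifold map as well one checks these local lifts are local diffeomorphisms and may be shrunk to open embeddings $\tilde f$. Pulling back $\sigma_j$ along such a lift $\tilde f$ produces a Riemannian metric on the corresponding open subset of $U_i$. Since $\sigma_j$ is $H_j$-invariant and any two lifts over the same charts differ by a unique element of the receiving group (cf. \cite[Prop. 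A.1]{mp97}), which acts $\sigma_j$-isometrically, these local metrics are independent of all choices; they glue to a metric $\rho_i$ on $U_i$, are $G_i$-invariant, and are compatible with the embeddings of charts of $\chi_1$ because the embeddings of charts of $\chi_2$ are $\sigma$-isometries. Hence $\rho:=f^{\ast}\sigma=(\rho_i)_{i\in I}$ is a Riemannian metric on $\chi_1$, and each local lift $\tilde f$ used above is now an isometry onto an open subset of $(V_j,\sigma_j)$.

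The second step upgrades $f$ to a map of frame bundles. Exactly as the embeddings of charts induce the gluing maps $\hat\lambda$ in \Cref{constrframebundle}, an isometric local lift $\tilde f$ induces an open embedding $\Fr^{\rho}(U_i')\hookrightarrow\Fr^{\sigma}(V_j)$, $(x,e)\mapsto\bigl(\tilde f(x),e\circ(\T_x\tilde f)^{-1}\bigr)$, which is $\OG(n)$-equivariant and $G_i$-equivariant up to the group homomorphism induced by $\tilde f$. Passing to the quotients $\Fr^{\rho}(U_i)/G_i$ and $\Fr^{\sigma}(V_j)/H_j$, and using once more that two lifts over an overlap differ by a group element so that the induced maps on frame bundles agree after the quotient, these assemble to an $\OG(n)$-equivariant smooth map $\Fr^{\rho}(\chi_1)\to\Fr^{\sigma}(\chi_2)$. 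Running the identical construction for $f^{-1}$, and noting $(f^{-1})^{\ast}\rho=\sigma$, yields an inverse, so this map is an $\OG(n)$-equivariant diffeomorphism, completing the claim.

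The main obstacle is the bookkeeping in the last two steps: one must organize the local lifts of $f$ (and of $f^{-1}$) so that they are compatible on overlaps and can be taken to be open embeddings, which is what makes both the pulled-back metric and the induced frame-bundle map well defined. This is entirely parallel to the verification, omitted after \Cref{constrframebundle}, that $\Fr(\chi)$ itself is well defined, and it rests on the uniqueness of lifts up to the action of the receiving group.
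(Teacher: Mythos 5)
Your proposal is correct and rests on the same two pillars as the paper's proof: pull back a Riemannian metric on $\chi_2$ along the diffeomorphism $f$, and then invoke \Cref{choiceofmetric} to dispose of the dependence on the metrics. The difference is in how the pullback and the induced map of frame bundles are realized. The paper short-circuits all of your local-lift bookkeeping with one observation: since $f$ is a diffeomorphism of effective orbifolds, for any chart $(U,G,\varphi)$ of $\chi_1$ the triple $(U,G,f\varphi)$ is a chart of $\chi_2$ on the \emph{same} domain $U$, so one may simply declare the metric on $U$ as a chart of $\chi_1$ to be the metric it carries as a chart of $\chi_2$; the map $f$ then lifts locally to the identity of $U$, and the induced $\OG(n)$-equivariant diffeomorphism $\Fr(\chi_1)\to\Fr(\chi_2)$ is immediate, with no overlaps to reconcile. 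Your route -- lifting $f$ into arbitrary charts $V_j$ of $\chi_2$, shrinking to open embeddings, and gluing -- does work, but it forces you to prove the compatibility statements you correctly flag as the main obstacle: that local lifts of a diffeomorphism are local diffeomorphisms (using that $f^{-1}$ also lifts and that a lift of the identity is invertible), and that two lifts over an overlap differ by a unique element of the receiving group, for which \cite[Prop.~A.1]{mp97} applies once you view a lift $\tilde f$ as an embedding of the chart $(U',G',f\varphi_i|_{U'})$ of $\chi_2$ into $(V_j,H_j,\psi_j)$. So your argument is a sound but longer version of the paper's; the paper's choice of charts buys the triviality of exactly the steps you had to work for.
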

\begin{proof}
The statement is independent of the choices of the Riemannian metrics by \Cref{choiceofmetric}. Choose a Riemannian metric on $\chi_2$. We can pull back this metric along a diffeomorphism $f \colon \chi_1 \to \chi_2$: Let $(U,G,\varphi)$ be an orbifold chart of $\chi_1$. Then $(U,G,f \varphi)$ is an orbifold chart for $\chi_2$ and we may choose the Riemannian metric on $U$ for $(U,G,\varphi)$ to be the one on $U$ for $(U,G,f\varphi)$. The map $f$ locally lifts to the identity map on $U$ and hence induces a $\OG(n)$-equivariant diffeomorphism $\Fr(\chi_1) \to \Fr(\chi_2)$. 
\end{proof}
\begin{theorem} \label{efforbaregq}
Let $\chi$ be an effective orbifold. Then the action of $\OG(n)$ on $\Fr(\chi)$ is almost free and effective on slices. Moreover, there is a diffeomorphism \[\chi \cong \Fr(\chi)\gq\OG(n) \] of effective orbifolds.
\end{theorem}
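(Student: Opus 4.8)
The plan is to work chart-by-chart and then glue, using the explicit local description of $\Fr(\chi)$ from \Cref{constrframebundle}. First I would fix the maximal atlas $(U_i,G_i,\varphi_i)_{i\in I}$ together with a Riemannian metric, which exists and whose choice is irrelevant by \Cref{choiceofmetric}. Recall that locally $\Fr(\chi)$ looks like $\Fr(U_i)/G_i$, where $\Fr(U_i)\cong U_i\times\OG(n)$ carries the free $G_i$-action $g.(x,e)=(g.x,e\circ(\T_xg)^{-1})$ and the commuting $\OG(n)$-action $A.(x,e)=(x,A\circ e)$. The first claim is then a purely local computation: a point of $\Fr(U_i)/G_i$ is an orbit $[x,e]$, and since the $G_i$-action on $\Fr(U_i)$ is free, the stabilizer in $\OG(n)$ of $[x,e]$ is $\{A\in\OG(n) : A\circ e\circ(\T_xg)^{-1}=e\text{ for some }g\in G_i\}$; because the $G_i$-action is free one checks that at most one such $g$ works for a given $A$, and the map $A\mapsto g$ is a group homomorphism $\OG(n)_{[x,e]}\to G_i$ which is injective (if $g=1$ then $A=\id$). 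Hence $\OG(n)_{[x,e]}$ embeds into the finite group $G_i$, so it is finite, proving almost freeness.

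Next, for the slice and effectiveness-on-slices statements I would identify a linear slice at $[x,e]$. The obvious candidate is the image in $\Fr(U_i)/G_i$ of $\{(y,e\circ(\T_xg)^{-1}\circ\ldots)\}$ — more precisely, shrink $U_i$ to a $G_{i,x}$-invariant neighbourhood of $x$ on which $G_i$ acts "locally like $G_{i,x}$", so that $\Fr(U_i)/G_i$ near $[x,e]$ is modelled by $(W\times\OG(n))/G_{i,x}$ for $W$ a $G_{i,x}$-invariant euclidean neighbourhood of $x$. A transversal to the $\OG(n)$-orbit through $[x,e]$ is then $(W\times\OG(n)_{[x,e]})/G_{i,x}\cong W$, and the $\OG(n)_{[x,e]}$-action on it is conjugate (via the homomorphism above) to the $G_{i,x}$-action on $W$, which is effective since $\chi$ is an effective orbifold and $W\subseteq U_i$ is open and $G_i$-invariant after refinement — here effectiveness of the $G_i$-action on the connected open $U_i$ forces effectiveness of $G_{i,x}$ on any open invariant piece. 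This gives simultaneously that the slice is linear (it is diffeomorphic to euclidean $W$, and one can arrange the metric so the action is orthogonal) and that $\OG(n)$ is effective on slices.

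For the diffeomorphism $\chi\cong\Fr(\chi)\gq\OG(n)$, I would apply \Cref{globalquotient} to the $\OG(n)$-manifold $\Fr(\chi)$ (legitimate by the two properties just established) to get an effective orbifold structure on $\Fr(\chi)/\OG(n)$. The underlying spaces agree: $\Fr(U_i)/(G_i\times\OG(n))=(U_i/\OG(n))\gq G_i$... rather $\Fr(U_i)/\OG(n)=U_i$ canonically ($\OG(n)$ acts freely and transitively on the frames over each point), so $\Fr(U_i)/(\OG(n)\times G_i)\cong U_i/G_i\cong\varphi_i(U_i)$, and these homeomorphisms glue to a homeomorphism $\Fr(\chi)/\OG(n)\cong|\chi|$. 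It remains to match the orbifold atlases: the chart of $\Fr(\chi)\gq\OG(n)$ coming from a linear slice at $[x,e]$ is, by the computation in the previous paragraph, exactly $(W,G_{i,x},\varphi_i|_W)$ up to the group isomorphism $\OG(n)_{[x,e]}\cong G_{i,x}$, which is (equivalent to) a chart of $\chi$; so the two atlases have a common refinement and the identity on underlying spaces is a diffeomorphism of effective orbifolds.

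The main obstacle I expect is not any single hard idea but the bookkeeping in the middle step: choosing the neighbourhood $W$ carefully enough that $G_i$ restricted near $x$ is controlled by $G_{i,x}$ (this is the standard "a point has a $G_{i,x}$-invariant neighbourhood meeting its $G_i$-translates only via $G_{i,x}$" fact for finite group actions), keeping track of the homomorphism $\OG(n)_{[x,e]}\to G_i$ and verifying it lands in $G_{i,x}$ and is an isomorphism onto it, and checking that the slice one writes down is genuinely $\OG(n)_{[x,e]}$-equivariantly diffeomorphic to an orthogonal representation — which is where one uses that the metric makes everything isometric and that $G_{i,x}$ already acts orthogonally on $U_i\cong\R^n$ after the linear-slice reduction mentioned before \Cref{globalquotient}. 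All the group-theoretic injectivity statements ultimately reduce to freeness of the $G_i$-action on $\Fr(U_i)$, which is immediate from the definition of the frame action, so no genuinely new input beyond \Cref{constrframebundle} and \Cref{globalquotient} is needed.
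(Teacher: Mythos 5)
Your proposal is correct and follows essentially the same route as the paper: both work locally on a chart, identify $\Fr(U)/G$ near a frame over $x$ with an induced space exhibiting $U$ (your $W$) as a linear slice whose $\OG(n)$-isotropy is isomorphic to the chart's isotropy group acting in the original way, and then match the resulting atlas of $\Fr(\chi)\gq\OG(n)$ with that of $\chi$. The only cosmetic difference is that the paper avoids the bookkeeping you flag by choosing a linear chart with $G=G_x$ and the euclidean metric at the outset, so that $\Fr(U)/G\cong\OG(n)\times_G U$ on the nose.
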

\begin{proof}
Let $x$ be a point in $|\chi|$ and $(U,G,\varphi)$ a linear orbifold chart around $x$. We may in particular assume that $G_x=G$. There is a neighborhood of $x$ in $\Fr(\chi)$ which is diffeomorphic to $\Fr(U)/G$. By \Cref{formanifolds}, we may assume that $\Fr(U)$ is constructed by using the euclidean metric on $U$. The manifold $\Fr(U)$ is then equivariantly diffeomorphic to $U \times \OG(n)$ where $G$ acts via the action map on $U$ and via the differential on $\OG(n)$. This action map is given by multiplication with an element of $\OG(n)$. Therefore, the differential has constant value, an element of $\OG(n)$.
It follows that there is a $\OG(n)$-equivariant diffeomorphism \[\Fr(U)/G \cong \OG(n) \times_{G} U \,.\]
For any $A$ in $\OG(n)$, we obtain a linear slice $A.{U}$ at~$[A,x]$ in $\OG(n) \times_{G} U$ under the action of $\OG(n)$. This action in particular is almost free and effective on slices. Using the above diffeomorphism, we see that the action of $\OG(n)$ on $\Fr(\chi)$ is also almost free and effective on slices. We can also conclude that $(U,G,\varphi^\prime)$ is a linear orbifold chart of $\Fr(\chi)\gq \OG(n)$ where $\varphi^\prime$ is the composition \[U \longhookrightarrow \OG(n) \times_G U \cong \Fr(U)/G \longhookrightarrow \Fr(\chi) \longrightarrow \Fr(\chi)/\OG(n) \] where the first map sends $x \in U$ to $[\id,x]$.
We now obtain a homeomorphism \[\left(\Fr(U)/G \right)/\OG(n) \cong \left(\Fr(U)/\OG(n) \right)/G \cong U/G \] which lifts to a map of charts $\id \colon U \to U$ from $(U,G,\varphi^{\prime})$ to $(U,G,\varphi)$. These homeomorphisms are compatible with embeddings of orbifold charts and induce the desired diffeomorphism $\Fr(\chi) \gq \OG(n) \to \chi$.
\end{proof}
\subsection{Orbifolds as Lie Groupoids} \label{grouporb}
There are two disadvantages when working with effective orbifolds.
\begin{itemize}
	\item They only model quotients of effective group actions on manifolds. But there are important examples of ineffective quotients, e.g. the quotient $\mathbb{H}/\SL_2(\mathbb{Z})$ of the complex upper half plane $\mathbb{H}$ by the action of $\SL_2(\mathbb{Z})$, which is the moduli space of elliptic curves. The non-trivial element $-\id$ acts trivially. 
	\item There is no good definition of morphisms between two effective orbifolds. It is in particular difficult to define pullbacks of vector bundles: They are not defined for all morphisms. More details on pullbacks of vector bundles over effective orbifolds can be found in~\cite[Sec. 4.4]{cr02}.
\end{itemize}
Both problems can be fixed by defining orbifolds as a certain kind of Lie groupoids. An excellent overview of this approach can be found in Moerdijk's paper \cite{moe02}. A closely related approach, which won't be discussed here but which may be of interest to the reader as well, is the one using differentiable stacks. An introduction to this approach, which also recovers the definition of orbifolds as Lie groupoids, can be found in Lerman's paper \cite{ler08}.  
\begin{comment}
\begin{definition}[groupoid]
A \defn{groupoid} is a category in which all morphisms are isomorphism. A morphism between groupoids is a functor. 
\end{definition}
\end{comment}
\begin{definition}[Topological groupoid] \label{deftopgrpd}
	A \defn{topological groupoid} is a groupoid object in~$\Top$, that is a tuple $\G=(\G_0,\G_1,s,t,e,i,\circ)$ where $\G_0$ and $\G_1$ are spaces and~$s \colon \G_1 \to \G_0$, $t \colon \G_1 \to \G_0$, $e \colon \G_0 \to \G_1$, $i \colon \G_1 \to \G_1$ and $\circ \colon \G_1 \times_{s,t} \G_1 \to \G_1$ are continuous maps such that the underlying sets of $\G_0$ (as objects) and $\G_1$ (as arrows) together with $s$ as the source map, $t$ as the target map, $e$ as the identity map, $i$ as the inverse map and $\circ$ as the composition map form a (small) groupoid.  
	
	A \defn{morphism} of topological groupoids $f \colon \G \to \HB$ is a pair of continuous maps $f_0 \colon \G_0 \to \HB_0$ and $f_1 \colon \G_1 \to \HB_1$ which is a functor between the underlying groupoids.
	A \defn{$2$-morphism} between two morphisms of topological groupoids $f,g \colon \G \to \HB$ is a continuous map $\G_0 \to \HB_1$ which is a natural transformation between $f$ and $g$. 
	
	The category of topological groupoids is enriched over itself. We topologize the set~$\gmap(\G,\HB)_0$ of morphisms between $\G$ and $\HB$ as a subspace of $\map(\G_1,\HB_1)$. Moreover,~$\gmap(\G,\HB)_1$ is the space of $2$-morphisms between~$\G$ and~$\HB$ topologized as a subspace of $\map(\G_1,\HB_1) \times \map(\G_1,\HB_1)\times \map(\G_0,\HB_1)$ where the inclusion into the first factor is given by the source functor, the one into the second factor by the target functor and the one into the third factor by the underlying map of the natural transformation itself.  
	
	The category of topological groupoids is denoted by $\TopGrpd$. 
\end{definition}
\begin{definition}[Lie groupoid]
	Similarly, a \defn{Lie groupoid} is tuple $\G=(\G_0,\G_1,s,t,e,i,\circ)$ where $\G_0$ and $\G_1$ are manifolds, $s \colon \G_1 \to \G_0$, $t \colon \G_1 \to \G_0$ are smooth submersions and~$e \colon \G_0 \to \G_1$, $i \colon \G_1 \to \G_1$ and $\circ \colon \G_1 \times_{s,t} \G_1 \to \G_1$ are smooth maps such that the underlying sets form a groupoid as above. 
	
	Morphisms and $2$-morphisms are defined just as for topological groupoids with smooth maps instead of continuous ones.
	The category of Lie groupoids is denoted by $\LieGrpd$.
\end{definition}
If $s$ is a submersion, then $t$ automatically is a submersion because $t=s \circ i$ and $i$ is a self-inverse diffeomorphism.
The requirement that $s,t \colon \G_1 \to \G_0$ are submersions ensures that the fibered product $\G_1 \times_{s,t} \G_1$ exists in $\Mfld$ and that the underlying space can be computed in $\Top$. 

The forgetful functor $\Mfld \to \Top$ induces a forgetful functor $\LieGrpd \to \TopGrpd$. In particular, every Lie groupoid can be regarded as a topological groupoid. We will not mention this when using this fact.  
\begin{example} \label{examplegroupoids}We will give some examples without explicitly mentioning or defining all maps.  
	\begin{itemize} 
		\item Let $G$ be a topological group (or Lie group, respectively). Let $\B G$ denote the topological groupoid (or Lie groupoid, respectively) with objects space $\B G_0=\ast$ and arrow space $\B G_1=G$. The identity map of $\B G$ sends $\ast$ to the identity of $G$. Composition and inverse map are given by the corresponding maps of $G$.
		\item More generally, let $G$ be a topological group and $A$ be a $G$-space. Define the \defn{action groupoid} $G \ltimes A$ with $(G \ltimes A)_0=A$ and $(G \ltimes A)_1=G \times A$. The map $s \colon G \times A \to A$ is the projection map and the map $t \colon G \times A \to A$ is the action map of $A$. 
		
		An arrow $g \colon x \to y$ in $G \ltimes A$ from $x$ to $y$ in $A$ may be described as an element~$g$ of $G$ such that $g.x=y$. The composition is defined by using the composition of~$G$.  
		
		The topological groupoid $G \ltimes A$ carries the structure of a Lie groupoid if $G$ is a Lie group acting smoothly on a manifold $M=A$.
	\end{itemize}
\end{example}
\begin{comment}
\begin{notation}
Throughout this paper, $s$ and $t$ are usually supposed to denote the source and target map of a groupoid. When clear from the context, we will also use the same variables $s$ and $t$ for the source and target map of possibly distinct groupoids.
\end{notation}
\end{comment}
\begin{definition}[Essential equivalence] \label{essequ} An \defn{essential equivalence} of topological groupoids (or Lie groupoids, respectively) is a morphism $f \colon \G \to \HB$ of groupoids such that the following holds true:
	\begin{itemize}
		\item (essentially surjective) The map $\HB_1 \times_{s,f_0} \G_0 \to \HB_0$ induced by $t \colon \HB_1 \to \HB_0$ on the first factor is a surjection admitting local sections (or surjective submersion, respectively).
		\item (fully faithful) The square \[\begin{tikzcd}[sep=large]
		\G_1 \arrow{r}{f_1} \arrow{d}[left]{(s,t)} & \HB_1 \arrow{d}{(s,t)} \\
		\G_0 \times \G_0 \arrow{r}{(f_0,f_0)} & \HB_0 \times \HB_0
		\end{tikzcd} \] is a pullback in $\Top$ (or in $\Mfld$, respectively).
	\end{itemize}
	We say that two topological groupoids (or Lie groupoids, respectively) are \defn{essentially equivalent} if there is a zig-zag of essential equivalences between them. This is often also called \defn{Morita equivalent}.
\end{definition}
%The existence of an essential equivalence between two topological groupoids is not an equivalence relation because it is not symmetric. We therefore consider the equivalence relation which is generated by essential equivalences.
\begin{comment}
\begin{definition}[Morita equivalent]
Two topological groupoids $\G$ and $\G^\prime$ (or Lie groupoids, respectively) are \defn{Morita equivalent} if there is a third topological groupoid (or Lie groupoid, respectively) $\HB$ and essential equivalences $\HB \to \G^\prime$ and $\HB\to \G$.
\end{definition}
\begin{lemma}[{\cite[Prop. 5.12]{mm03}}]
Morita equivalence is an equivalence relation on the class of topological groupoids (or Lie groupoids, respectively).
\end{lemma}
\end{comment}
\begin{definition}[Stabilizer group]
	Let $\G$ be a topological groupoid and let $x$ be a point in $\G_0$. We write \[\G_x=(s,t)^{-1}(x,x)\] for the \defn{stabilizer group} of $x$. This becomes a topological group with the subspace topology.
\end{definition}
\begin{definition}[Proper, foliation groupoid] Let $\G$ be a Lie groupoid. We say that $\G$ is \defn{proper} if the map $(s,t) \colon \G_1 \to \G_0 \times \G_0$ is proper. We say that $\G$ is a \defn{foliation groupoid} if $\G_x$ is discrete for all $x \in \G_0$.
\end{definition}
\begin{lemma}
	Properness and being a foliation groupoid are invariant under essential equivalences.
\end{lemma}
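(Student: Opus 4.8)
The plan is to show that both properties are detected on a single Lie groupoid in an essential equivalence and then transported along it; since essential equivalence is defined via a zig-zag, it suffices to treat a single essential equivalence $f\colon \G \to \HB$ and prove that $\G$ is proper (resp.\ a foliation groupoid) if and only if $\HB$ is. The key structural input is the pullback square in the fully faithful condition, which identifies $\G_1$ with the fibered product $\HB_1 \times_{\HB_0 \times \HB_0} (\G_0 \times \G_0)$ along $(f_0,f_0)$, together with the fact that the essentially surjective condition makes $\HB_1 \times_{s,f_0} \G_0 \to \HB_0$ a surjective submersion, so in particular $f_0$ admits local sections after base change and $\G_0 \to \HB_0$ is ``locally split'' in a suitable sense.

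For the stabilizer groups, I would argue pointwise: for $x \in \G_0$ the fully faithful square restricts over $(x,x) \in \G_0 \times \G_0$ to a homeomorphism $\G_x \cong \HB_{f_0(x)}$, because the pullback of $(s,t)\colon \HB_1 \to \HB_0 \times \HB_0$ along the point $(f_0(x),f_0(x))$ is by definition $\HB_{f_0(x)}$, and the same pullback computed through the outer square is $\G_x$. Hence $\G_x$ is discrete iff $\HB_{f_0(x)}$ is. Combined with essential surjectivity, which guarantees every object of $\HB$ is isomorphic to one of the form $f_0(x)$ and hence has a stabilizer homeomorphic to some $\G_x$, we conclude that all stabilizers of $\G$ are discrete iff all stabilizers of $\HB$ are. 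This handles the foliation-groupoid case.

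For properness, the idea is to compare $(s,t)\colon \G_1 \to \G_0 \times \G_0$ with $(s,t)\colon \HB_1 \to \HB_0 \times \HB_0$ using the pullback square. If $(s,t)_\HB$ is proper, then its base change along $(f_0,f_0)\colon \G_0 \times \G_0 \to \HB_0 \times \HB_0$ is proper (properness is stable under base change in $\Top$), and that base change is exactly $(s,t)_\G$, so $\G$ is proper. Conversely, if $(s,t)_\G$ is proper, one uses essential surjectivity: the map $\G_0 \to \HB_0$ becomes, after the relevant base change, a surjective submersion (in particular an open surjection), and properness descends along such maps; concretely, cover $\HB_0 \times \HB_0$ by images of charts coming from the local sections provided by essential surjectivity, pull the properness of $(s,t)_\G$ back through the pullback square on each piece, and reassemble. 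The main obstacle I anticipate is this converse direction: descent of properness along surjective submersions (or open surjections admitting local sections) is true but requires a careful point-set argument, checking that preimages of compact sets in $\HB_0 \times \HB_0$ are compact by lifting compact sets locally through the sections and using compactness of the fibers of $f_0$ — which is itself a consequence of the already-established identification of stabilizers and essential surjectivity. I would isolate this as the one genuinely technical lemma and keep the rest formal.
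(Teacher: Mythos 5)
Your argument for the foliation-groupoid half is exactly the paper's: the fully faithful pullback square, restricted over $(x,x)$, identifies $\G_x$ with $\HB_{f_0(x)}$, and essential surjectivity plus conjugation covers the remaining objects of $\HB$. That part is correct and matches the paper, which disposes of it in one sentence.

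For properness the paper simply cites \cite[Prop.~5.26]{mm03}, so you are doing more work than the paper does; the base-change direction (properness of $(s,t)_\HB$ implies properness of $(s,t)_\G$) is fine. The problem is in your sketch of the descent direction. You propose to use ``compactness of the fibers of $f_0$'' and assert that this follows from the identification of stabilizers together with essential surjectivity. Neither is true: the fibers of $f_0$ need not be compact for an essential equivalence, and nothing about stabilizers controls them. For instance, the \v{C}ech groupoid of a countably infinite open cover of a point is essentially equivalent to the trivial groupoid on a point, both are proper, yet the fiber of $f_0$ is an infinite discrete set. So this ingredient of your technical lemma is false, and an argument built on it would break. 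The correct mechanism, which is what \cite[Prop.~5.26]{mm03} carries out, does not need it: given a compact $K \subseteq \HB_0 \times \HB_0$, cover its two projections by finitely many open sets $V_i$ over which the essential surjectivity map $\HB_1 \times_{s,f_0} \G_0 \to \HB_0$ admits sections $y \mapsto (h_i(y), x_i(y))$ with $h_i(y)\colon f_0(x_i(y)) \to y$; an arrow $g\colon y \to y'$ with $(y,y') \in K$ is then conjugated to $h_j(y')^{-1} g\, h_i(y)$, which by the fully faithful pullback square is an arrow of $\G$ between points of the compact sets $x_i(\overline{V_i} \cap \mathrm{pr}_1 K)$ and $x_j(\overline{V_j} \cap \mathrm{pr}_2 K)$. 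Properness of $(s,t)_\G$ then exhibits $(s,t)_\HB^{-1}(K)$ as a finite union of continuous images of compact sets. Your overall plan (local sections, cover $K$, pull back through the pullback square) is the right shape, but the step carrying the load should be this conjugation argument, not compactness of the fibers of $f_0$.
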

\begin{proof}
	It follows immediately from the fully faithfulness that essential equivalences induce isomorphism on stabilizer groups. This proves that being a foliation groupoid is invariant under essential equivalence. The statement for properness is proven in \cite[Prop. 5.26]{mm03}.
\end{proof}
\begin{definition}[Orbifold] \label{deforbifold}
	An \defn{orbifold} is a proper foliation groupoid. We write $\Orbfld$ for the full subcategory of orbifolds in $\LieGrpd$.
\end{definition}
%\begin{definition}[Underlying space]
%	Let $\G$ be an orbifold. The coequalizer of \[s,t \colon \G_1 \longrightrightarrows \G_0\] in $\Top$ is called the \defn{underlying space $\G_0/\G_1$} of $\G$.
%\end{definition}
The category of orbifolds is usually defined as the categorical localization of the category of proper foliation groupoids at the essential equivalences. There are several explicit construction for this. A brief explanation using a calculus of fractions can be found in \cite[Sec. 2]{moe02}. A more detailed description which also explains how to invert the category as a $2$-category can be found in \cite[Sec. 3]{ler08}. 

However, we do not follow this idea. We will consider functors which will not necessarily invert essential equivalence but only send them to some kind of weak equivalence. Having this in mind, the category of proper foliation groupoids should be seen as the category of orbifolds and the class of essential equivalence is a class of weak equivalences.% for this category. 
\begin{remark} \label{lievstop}
	Any essential equivalence between proper Lie groupoids is an essential equivalence of underlying topological groupoids. The essentially surjectiveness condition holds true because any submersions admits local sections. Let $P$ denote the topological pullback of the diagram in the fully faithfulness condition. The induced map $\G_1 \to P$ is bijective and continuous, so we need to check that its inverse is continuous, too. We may check this locally at a point $x$ in the codomain $P$. Let $K$ be a compact neighborhood of the image of $x$ under the structure map $P \to \G_0 \times \G_0$. The restriction of the map $\G_1 \to P$ to the preimage of $K$ under $(s,t) \colon \G_1 \to \G_0 \times \G_0$ and the preimage of $K$ under the structure map $P \to \G_0 \times \G_0$ is a bijective map from a compact space to a Hausdorff space and hence a homeomorphism. 
\end{remark}
\begin{comment}
\begin{remark}
A map between two orbifolds induces a continuous map between the underlying topological spaces. Moreover, it can be shown that essential equivalences induce homeomorphisms on underlying spaces.
\end{remark}
\end{comment}
\begin{example}[Global quotient orbifold]
	Let $M$ be a manifold together with an almost free action of a compact Lie group $G$. The action groupoid $G \ltimes M$ is an orbifold and is called a \defn{global quotient orbifold}. %The underlying space of $G \ltimes M$ is $M/G$. 
\end{example}
A generalization of \Cref{efforbaregq} for general orbifolds as defined in this section was proven by Pardon.
\begin{theorem}[{\cite[Cor 1.3]{par20}}] \label{gqc}
	Every orbifold $\G$ is essentially equivalent to $G \ltimes M$ for some manifold $M$ together with an almost free action of a compact Lie group~$G$ on~$M$.
\end{theorem}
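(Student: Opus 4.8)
The plan is to follow the strategy of \Cref{efforbaregq}, where an \emph{effective} orbifold was realised as a global quotient by passing to the frame bundle of its tangent bundle, the decisive point there being that the tangent representation of an effective isotropy group is faithful. A general orbifold need not have a faithful tangent bundle, so I would replace it by an auxiliary vector bundle on which every isotropy group acts faithfully, and rerun the frame-bundle argument with that bundle in place of $T$. As a preliminary I would reduce to the case that $\G$ is a \emph{proper étale} Lie groupoid: every foliation groupoid is Morita equivalent to an étale one (restrict $\G$ to a transversal in $\G_0$ meeting every orbit, cf.\ \cite{mm03}), and properness is invariant under essential equivalence, so after passing to a Morita-equivalent model the structure maps $s, t \colon \G_1 \to \G_0$ are local diffeomorphisms, each isotropy group $\G_x$ is finite --- being discrete and, by properness, compact --- and $\G$ is locally isomorphic to an action groupoid $\G_x \ltimes U$ with $\G_x$ finite.

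The crux is to construct a vector bundle $E$ over $\G$ (a $\G$-equivariant real vector bundle of some finite rank $k$ over $\G_0$) whose fibrewise isotropy representation $\G_x \to \GL(E_x)$ is injective for every $x \in \G_0$, together with a $\G$-invariant fibre metric. Over such a local model $\G_x \ltimes U$ the trivial bundle $U \times \R[\G_x]$ carrying the regular representation does the job; the real work is to glue these local models over all of $\G$ into \emph{one} globally defined bundle of \emph{finite} rank, the obstruction being that the isotropy groups and their representations change from model to model. For $\G$ with compact orbit space one can use finitely many such models and take a direct sum of suitable extensions of them; in general one runs a countable exhaustion and uses properness to control the gluing. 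This is precisely the main theorem of \cite{par20} (that orbispaces have ``enough vector bundles''), and I expect it to be the only genuinely difficult step; averaging then supplies the invariant metric.

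With $E$ in hand, let $P := \Fr(E) \to \G_0$ be the total space of its orthonormal frame bundle, constructed as in \Cref{constrframebundle}: a smooth manifold with a left $\G$-action through the bundle structure of $E$ and a commuting free right $\OG(k)$-action reparametrising frames. Faithfulness of $E$ makes the $\G$-action on $P$ free, and properness of $\G$ makes it proper, so the quotient $M := P/\G$ is a manifold --- Hausdorff and second-countable because $\G$ is --- carrying a residual smooth $\OG(k)$-action. The $\OG(k)$-stabiliser of a point $[p] \in M$ is conjugate to the (finite) image of the isotropy representation $\G_x \to \OG(k)$ read off from the frame $p$, so this action is almost free and $\OG(k) \ltimes M$ is a global quotient orbifold; it need not be effective on slices, but the statement does not require it to be.

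Finally I would exhibit $\G$ and $\OG(k) \ltimes M$ as essentially equivalent by means of the combined action groupoid $\mathcal K$ with object space $P$, in which an arrow $p \to p'$ is a pair $(g, A) \in \G_1 \times \OG(k)$ with $g \cdot (p \cdot A) = p'$. The two forgetful functors $\G \longleftarrow \mathcal K \longrightarrow \OG(k) \ltimes M$ are essential equivalences: essential surjectivity holds because the anchor maps $P \to \G_0$ and $P \to M = P/\G$ are surjective submersions, and the square of \Cref{essequ} is a pullback in each case because the fibres of $P \to \G_0$ are $\OG(k)$-torsors (it is a principal $\OG(k)$-bundle) while the fibres of $P \to M$ are $\G$-torsors (it is a principal $\G$-bundle). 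To repeat, the only real obstacle is the construction of the faithful bundle $E$; granting that, everything else is a routine adaptation of \Cref{constrframebundle} and \Cref{efforbaregq}.
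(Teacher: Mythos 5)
Your proposal is correct and matches the source of this statement: the paper itself gives no proof of \Cref{gqc} but simply cites \cite{par20}, and your sketch accurately reconstructs how Pardon deduces the corollary from his main theorem, namely by producing a faithful finite-rank vector bundle with invariant metric on a proper \'etale model of $\G$, passing to its orthonormal frame bundle $P$, and reading off the Morita equivalence $\G \leftarrow (\G \times \OG(k)) \ltimes P \rightarrow \OG(k) \ltimes (P/\G)$ from the two principal bundle structures on $P$. You correctly isolate the one genuinely hard step --- gluing the local regular representations into a single global finite-rank faithful bundle --- as the content of Pardon's main theorem, which is exactly the step the paper also outsources to that reference.
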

\begin{remark} \label{grpdtoeff}
	We briefly discuss how proper foliation groupoids may be seen as a generalization of effective orbifolds and refer the reader to \cite[Sec 5.6]{mm03} for more details. We want to associate a proper foliation groupoid to an effective orbifold $\chi$. Having \Cref{efforbaregq} in mind, an obvious choice, which is also often used, is the action groupoid~$\OG(n) \ltimes \Fr(\chi)$.
	
	There is another construction, which is more geometric and directly uses the charts of~$\chi$ and not the frame bundle construction. It can be found in \cite[Sec. 5.6]{mm03}. Both constructions lead to essentially equivalent orbifolds, so we may use the first one.  
	
	It can be shown that the associated Lie groupoids are essentially equivalent if and only if the original effective orbifolds are diffeomorphic, see \cite[Prop. 5.29(ii)]{mm03}.
	
	Conversely, there is also a good criteria for checking whether an orbifold comes from an effective orbifold (up to essential equivalence), see \cite[Thm. 5.32]{mm03}.
	
	%It is proven in \cite[Thm. 1]{cm00} that every proper foliation groupoid $\G$ is Morita equivalent to an \defn{étale, proper Lie groupoid}, that is a proper Lie groupoid where the source and the target map are étale (or equivalently $\dim \G_1 = \dim \G_0$, implying that the stabilizer groups are discrete, i.e. the Lie groupoid is a foliation groupoid). Let $x$ in $\G_0$ be a point. Let $g$ be an element in $\G_x$. Choose a neighborhood $U$ of $x$ in $\G_0$ such that $s$ admits a section~$c \colon U \to \G_1$. The germ of the map $t \circ c \colon U \to \G_0$ is an element in $\Diff_x(\G_0)$. An étale, proper Lie groupoid is called \defn{effective} if the assignment $\G_x \to \Diff_x(\G_0)$ is injective for all $x$ in $\G_0$. These effective, étale, proper Lie groupoid are the orbifolds coming from effective orbifolds (up to Morita equivalence).
	
	%One can show that this effectiveness condition is invariant under Morita equivalence of étale, proper Lie groupoids. 
	
	%For the rest of this thesis an orbifold is meant to be a proper foliation groupoid as defined above. We will use the term \defn{effective orbifold} when referring to \Cref{definitionefforbifold}. By the above comments, we may also regard an effective orbifold as an orbifold which is Morita equivalent to an effective, étale, proper Lie groupoid and diffeomorphism as Morita equivalences.
\end{remark}
%\begin{notation}
For $\G$ a topological groupoid, we write $|\G|$ for the geometric realization of the nerve of $\G$ and we write $\|\G\|$ for the fat geometric realization of the nerve of $\G$.
%\end{notation}
\begin{comment}
\begin{proposition}[{\cite[Proposition A.1.(iv)]{seg74}}] \label{fatvsthin}
The natural quotient map $\|\G\| \to |\G|$ is a weak equivalence if $\nerve(\G)$ is good.
\end{proposition}
\end{comment}
\begin{proposition} \label{productfgr} 
	The geometric realization commutes with products, i.e. there is a natural homeomorphism \[|\G| \times |\HB| \cong |\G \times \HB|\] for any two topological groupoids $\G$ and $\HB$.
\end{proposition}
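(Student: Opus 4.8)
The plan is to reduce the statement to the fact that geometric realization of simplicial spaces preserves finite products, and then to establish that fact.

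First I would observe that the nerve carries products of topological groupoids to levelwise (diagonal) products of simplicial spaces. Since $(\G\times\HB)_0=\G_0\times\HB_0$ and $(\G\times\HB)_1=\G_1\times\HB_1$, with source, target, unit, inverse and composition all given componentwise, the space of $n$ composable arrows of $\G\times\HB$ is
\[
\nerve(\G\times\HB)_n=(\G_1\times\HB_1)\times_{\G_0\times\HB_0}\dots\times_{\G_0\times\HB_0}(\G_1\times\HB_1)\cong \nerve(\G)_n\times\nerve(\HB)_n,
\]
because iterated pullbacks commute with products (both are limits), and under this identification the simplicial operators of $\nerve(\G\times\HB)$ are the products of those of $\nerve(\G)$ and $\nerve(\HB)$. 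Hence $\nerve(\G\times\HB)\cong\nerve(\G)\times\nerve(\HB)$ naturally in $\G$ and $\HB$, and it suffices to produce, for simplicial spaces $X_\bullet$ and $Y_\bullet$, a natural homeomorphism $|X_\bullet\times Y_\bullet|\cong|X_\bullet|\times|Y_\bullet|$, where on the left $X_\bullet\times Y_\bullet$ denotes the pointwise product.

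For the latter I would argue via the coend description $|X_\bullet|=\int^{[n]\in\Delta}X_n\times\Delta^n$. Writing each simplicial space as a colimit of representables by the density formula $X_\bullet\cong\int^{[p]\in\Delta}X_p\cdot\Delta[p]$ — with $X_p\cdot\Delta[p]$ the simplicial space $[k]\mapsto X_p\times\Delta[p]_k$ — and using that $(-)\times(-)$ commutes with colimits in each variable in compactly generated spaces, one gets $X_\bullet\times Y_\bullet\cong\int^{[p]}\int^{[q]}(X_p\times Y_q)\cdot(\Delta[p]\times\Delta[q])$. Geometric realization of simplicial spaces is a left adjoint, the right adjoint sending $Z$ to $[n]\mapsto\map(\Delta^n,Z)$ (here one uses cartesian closedness of $\Top$), so it preserves colimits and
\[
|X_\bullet\times Y_\bullet|\cong\int^{[p]}\int^{[q]}(X_p\times Y_q)\times|\Delta[p]\times\Delta[q]|.
\]
Now $\Delta[p]\times\Delta[q]$ is a simplicial set, its realization as a simplicial space agrees with its realization as a simplicial set, and by Milnor's classical product theorem for simplicial sets in compactly generated spaces, $|\Delta[p]\times\Delta[q]|\cong|\Delta[p]|\times|\Delta[q]|\cong\Delta^p\times\Delta^q$. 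Plugging this back in and commuting the products past the coends once more gives
\[
|X_\bullet\times Y_\bullet|\cong\Bigl(\int^{[p]}X_p\times\Delta^p\Bigr)\times\Bigl(\int^{[q]}Y_q\times\Delta^q\Bigr)=|X_\bullet|\times|Y_\bullet|,
\]
and unwinding the identifications shows this composite is the canonical comparison map.

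The main obstacle is purely point-set-topological: the canonical continuous bijection $|X_\bullet\times Y_\bullet|\to|X_\bullet|\times|Y_\bullet|$ exists over any category of spaces, but it is a homeomorphism only because we work with compactly generated spaces, where finite products commute with colimits and quotients — which is precisely what licenses the coend manipulations above, and is where the paper's standing convention on $\Top$ is used. The second ingredient, Milnor's product theorem, reduces after triangulating the prisms $\Delta^p\times\Delta^q$ to the same compactly-generated bookkeeping and may be quoted rather than reproved. An alternative, more hands-on route would run a skeletal induction, building $|X_\bullet\times Y_\bullet|$ and $|X_\bullet|\times|Y_\bullet|$ by attaching cells and checking the comparison map is a homeomorphism at each stage, but the coend argument is shorter and makes the naturality in $\G$ and $\HB$ transparent.
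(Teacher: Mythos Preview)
Your argument is correct and follows the same two-step decomposition as the paper: first the nerve preserves products, then geometric realization of simplicial spaces does. The paper simply cites a reference (May) for the second step, whereas you unpack it via the coend description and Milnor's product theorem; your version is more explicit but not a different route.
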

\begin{proof}
	%	The product groupoid is given by $(\G \times \HB)_0 = \G_0 \times \HB_0$ and $(\G \times \HB)_1 = \G_1 \times \HB_1$ with structure maps defined using the universal property of the product. 
	The nerve functor commutes with products and so does the geometric realization, see \cite[Thm. 11.5]{may06}.
\end{proof}
\begin{proposition} \label{fgree}
	Let $f \colon \G \to \HB$ be an essential equivalence of topological groupoids. Then the induced map $\|f\| \colon \|\G\| \to \|\HB\|$ is a weak equivalence.
\end{proposition}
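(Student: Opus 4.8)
The plan is to build on one homotopical input: fat geometric realization $\|{-}\|$ carries levelwise weak equivalences of simplicial spaces to weak equivalences, with \emph{no} cofibrancy or goodness hypothesis (this is exactly why the fat, not the thin, realization is used here); see Segal \cite{seg74}. Two formal consequences will be used. First, if two morphisms $\G \to \HB$ of topological groupoids are related by a $2$-morphism, their fat realizations are homotopic: a $2$-morphism $\G_0 \to \HB_1$ is the same datum as a morphism $\G \times (0 \to 1) \to \HB$, and since the nerve commutes with products (cf.\ the proof of \Cref{productfgr}) this yields a simplicial homotopy $\nerve(\G) \times \Delta^1 \to \nerve(\HB)$; in particular an internal equivalence realizes to a homotopy equivalence. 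Second, a fully faithful morphism $p \colon \mathcal{E} \to \G$ admitting a section $\sigma$ (a morphism with $p\sigma = \id$) is automatically an internal equivalence: fully faithfulness supplies, for each object $e$, a unique arrow $\sigma p(e) \to e$ over $\id_{p(e)}$, and these assemble into a $2$-isomorphism $\sigma p \cong \id_{\mathcal{E}}$; hence $\|p\|$ is a homotopy equivalence. A general essential equivalence admits a weak inverse only locally over the object spaces, so I would route through a bibundle.

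Concretely, I would set $P = \HB_1 \,{}_s\!\times_{f_0} \G_0$, the space of pairs $(h,x)$ with $h$ an arrow of $\HB$ out of $f_0(x)$, with its commuting left $\HB$-action by post-composition (anchor $(h,x) \mapsto t(h)$ to $\HB_0$) and right $\G$-action by pre-composition along $f_1$ (anchor $(h,x) \mapsto x$ to $\G_0$). Fully faithfulness of $f$ says precisely that $P$ is a principal right $\G$-bundle over $\HB_0$; the left $\HB$-action is then automatically principal with orbit map $P \to \G_0$; and essential surjectivity of $f$ says precisely that the anchor $P \to \HB_0$ is surjective admitting local sections. Moreover $\sigma(x) = (\id_{f_0(x)}, x)$ is a global section of $P \to \G_0$ with $t \circ \sigma = f_0$. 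I would then form the two-sided bar construction, the bisimplicial space
\[
B_{m,n} \;=\; \HB_1 \times_{\HB_0}\!\cdots\times_{\HB_0}\! \HB_1 \times_{\HB_0} P \times_{\G_0} \G_1 \times_{\G_0}\!\cdots\times_{\G_0}\! \G_1
\]
with $m$ composable $\HB$-arrows, then $P$, then $n$ composable $\G$-arrows, the faces given by the two actions and by composition.

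Realizing $B_{\bullet,\bullet}$ in the two orders is the heart of the argument. Fixing $[n]$ and realizing in the $m$-direction computes the Borel construction of the free left $\HB$-space $P \times_{\G_0} \G_1^{\times n}$; since this action is principal with quotient $\G_0 \times_{\G_0} \G_1^{\times n} = \nerve_n(\G)$, the Borel construction is weakly equivalent to $\nerve_n(\G)$, naturally in $[n]$. Dually, fixing $[m]$ and realizing in the $n$-direction gives a space weakly equivalent to $\nerve_m(\HB)$, naturally in $[m]$. Because fat realization of a bisimplicial space can be computed as either iterated realization and sends levelwise weak equivalences to weak equivalences, passing to the diagonal yields weak equivalences $\|\nerve(\G)\| \simeq \|B_{\bullet,\bullet}\| \simeq \|\nerve(\HB)\|$; and tracing the map $\nerve(\G) \to \operatorname{diag} B_{\bullet,\bullet}$ induced by $\sigma$ (and $f_1$) shows the resulting zig-zag composes to $\|f\|$. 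Hence $\|f\|$ is a weak equivalence.

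The one genuinely non-formal input, and the step I expect to be the main obstacle, is the claim underlying the previous paragraph: the Borel construction $\|\nerve(\HB \ltimes Q)\|$ of a principal $\HB$-bundle $Q \to X$ is weakly equivalent, via the canonical augmentation, to the base $X$. This is where "admits local sections" is really used: over an open subset of $X$ carrying a section of $Q \to X$, the action groupoid $\HB \ltimes Q$ restricts to a fully faithful functor with a section onto the discrete groupoid on that open set, hence is an internal equivalence by the lemma of the first paragraph, so $\|{-}\|$ of the restriction is a homotopy equivalence there. Patching these local homotopy equivalences into a single weak equivalence $\|\nerve(\HB \ltimes Q)\| \to X$, with no cofibrancy assumption on $X$, is the delicate point; I would handle it by a Mayer--Vietoris/hypercover argument, using that the fat realization of the \v{C}ech nerve of an open cover of a space recovers that space up to weak equivalence (Dugger and Isaksen), reducing the global statement to the already-treated local one.
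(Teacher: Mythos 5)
Your strategy is sound, but it is a genuinely different route from the paper's: the paper proves this proposition in one line by citing Noohi \cite{noo12}, whose theorem identifies $\|\G\|$ as a classifying space of the quotient stack of $\G$; since essential equivalences induce equivalences of quotient stacks and classifying spaces are unique up to weak equivalence, the claim follows. Your two-sided bar construction over the bibundle $P = \HB_1 \times_{s,f_0} \G_0$ is in effect a self-contained reproof of the relevant special case of that result, resting only on Segal's levelwise-equivalence property of $\|{-}\|$ \cite{seg74} and a \v{C}ech-descent statement of Dugger--Isaksen type \cite{di04}. What the citation buys the paper is brevity; what your argument buys is transparency about exactly where the two halves of \Cref{essequ} enter (essential surjectivity as local triviality of $P \to \HB_0$, fully faithfulness as principality of the $\G$-action). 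Two points need care if you carry this out. First, the comparison map ``$\nerve(\G) \to \operatorname{diag} B_{\bullet,\bullet}$ induced by $\sigma$'' is not available as stated: $\sigma(x) = (\id_{f_0(x)},x)$ is not $\G$-equivariant, so it does not induce a simplicial map naively. The standard fix is to observe that the two bisimplicial augmentations $B_{\bullet,\bullet} \to \nerve(\G)$ and $B_{\bullet,\bullet} \to \nerve(\HB)$ become simplicially homotopic after composing the first with $\nerve(f)$ --- the homotopy is the ``ladder'' of vertical $\HB$-arrows carried by the $P$-coordinate --- and this identifies the zig-zag with $\|f\|$ in the homotopy category. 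Second, you correctly isolate the only non-formal input, namely that the augmentation $\|\nerve(\HB \ltimes Q)\| \to X$ of the Borel construction of a principal bundle with local sections is a weak equivalence; your local-section-plus-\v{C}ech patching does go through (fat realization of a levelwise open subobject is open, and the relevant homotopy colimits commute), but that lemma is where essentially all the work of the proof lives and must be written out honestly rather than asserted.
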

\begin{proof}
	This is mentioned as a corollary of \cite[Thm. 1.1]{noo12}. 
	\begin{comment}
	More in detail, it is proven that the fat geometric realization of a groupoid is a classifying space for the corresponding quotient stack. Essential equivalence of groupoids induce an equivalences of the associated quotient stacks \cite[Proposition 60]{met03}. Finally, the classifying space of a topological stack is unique up to weak equivalence and invariant under equivalences of topological stacks.
	\end{comment}
\end{proof}
The homotopy type of an orbifold is usually defined by using the fat geometric realization. 
\begin{definition}[Orbifold homotopy groups]\label{homotopygrouporbfld}
	Let $\G$ be an orbifold and let $k \ge 0$ be an integer. The $k$-th \defn{orbifold homotopy group} of $\G$ is defined as \[\pi_k(\G) = \pi_k(\|\G\|) \,. \]
\end{definition}
\Cref{fgree} implies that essential equivalences of orbifolds induce isomorphisms on all orbifold homotopy groups. 
\section{Orbispaces} \label{orbispaces}
The term \enquote{orbispace} is not used consistently in the literature. Orbispaces are always supposed to model spaces which locally arise as quotients of group actions. However, we want to warn the reader that the different models used in different areas of mathematics differ crucially. 
We will not use the term \enquote{orbispace} for well-defined mathematical objects but rather as a name for different approaches to the idea described above. 

We discuss two definitions: $\LI$-spaces as defined by Schwede \cite{sch19} and $\Orb$-spaces as defined by Gepner and Henriques \cite{gh07}. Moreover, we explain how to compare these two constructions by recalling a result from Körschgen's paper \cite{kor16}.
\subsection{\texorpdfstring{$\LI$}{L}-spaces} \label{Liorb}
We recall the definition of the topological monoid $\LI$, which contains every compact Lie group as a subgroup. A space with an action of $\LI$ then gives rise to a $G$-space for an arbitrary compact Lie group $G$ by restricting the action of $\LI$ to $G$.

The following is mainly taken from \cite{sch19}. We require all representations to be orthogonal representations on inner product spaces.
\begin{definition}[$\LI$-space]
	Let $\LI=\LB(\R^\infty,\R^\infty)$ be the space of linear isometric self-embeddings of $\R^\infty$ together with the standard inner product that carries the compactly-generated function space topology as described in \cite[Sec. 1]{sch19}. The space $\LI$ together with the composition of self-embeddings is a topological monoid.
	
	\begin{comment}
	\begin{remark}
	It is proven in \cite[Proposition A.1]{sch19} that that the subspace topology on $\LI$ actually is compactly generated and weak Hausdorff. We don't need to kelleyfy that topology. 
	\end{remark}
	\end{comment}
	An \defn{$\LI$-space} $A$ is a space together with a continuous action of the topological monoid~$\LI$. A \defn{morphism} of $\LI$-spaces is an $\LI$-equivariant continuous map of the underlying spaces. We write $\LI\dash\Top$ for the category of $\LI$-spaces. %The category $\LI\dash\Top$ is topologically enriched and we write $\LI\dash\Top(-,-)$ when referring to this enrichment. 
\end{definition}
\begin{definition}[Complete $G$-universe]
	Let $G$ be a compact Lie group. A \defn{complete $G$-universe} is a $G$-representation $\mathcal U_G$ of countably infinite dimension such that every finite-dimensional $G$-representation is isomorphic to a subrepresentation of $\mathcal U_G$. 
\end{definition}
\begin{comment}
Using the fact that every orthogonal representation of $G$ on $\mathcal U_G$ decomposes as a direct sum of irreducible representation (\cite[Prop. A.7]{sch19}) we get \[\mathcal U_G \cong \bigoplus_{\lambda \in \Lambda} \lambda^{\oplus \mathbb{N}}  \] as $G$-representations. Here $\Lambda$ is a set which contains exactly one representative from every isomorphism class of finite-dimensional irreducible $G$-representations. The set $\Lambda$ actually is finite or countably infinite.
\end{comment}
\begin{definition}[Universal subgroup] \label{universalsubgroup}
	A subgroup $G$ of $\LI$ is \defn{universal} if it is compact, admits the structure of a Lie group and $\R^\infty$ is a complete $G$-universe with the $G$-action on $\R^\infty$ induced by the embedding $G \longhookrightarrow \LI=\LB(\R^\infty,\R^\infty)$.
\end{definition}
\begin{lemma}[{\cite[Prop. 1.5]{sch19}}] \label{allareuniversal}
	Every compact Lie group is isomorphic to a universal subgroup of $\LI$. Moreover, every isomorphism between universal subgroups of $\LI$ is given by conjugation with an invertible element of $\LI$.
\end{lemma}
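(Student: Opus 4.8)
The plan is to deduce both assertions from the standard structure theory of orthogonal representations of compact Lie groups, namely complete reducibility together with the existence and uniqueness (up to $G$-equivariant linear isometric isomorphism) of a complete $G$-universe; these facts are recorded in \cite{sch18,sch19}. For the first assertion, fix a compact Lie group $G$ and write down an explicit complete $G$-universe $\mathcal{U}_G=\bigoplus_{\lambda\in\Lambda}\lambda^{\oplus\N}$, where $\Lambda$ contains one representative from each of the (at most countably many) isomorphism classes of finite-dimensional irreducible orthogonal $G$-representations: complete reducibility shows that every finite-dimensional $G$-representation embeds into $\mathcal{U}_G$, and $\mathcal{U}_G$ has countably infinite dimension. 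Choose a linear isometric isomorphism $j\colon\mathcal{U}_G\to\R^\infty$ of inner product spaces — it exists since both are countably-infinite-dimensional real inner product spaces, and it is automatically a homeomorphism for the colimit topologies as it carries an orthonormal basis to an orthonormal basis — and transport the $G$-action along $j$. This produces a continuous $G$-action on $\R^\infty$ by linear isometric automorphisms, equivalently a continuous homomorphism $\rho\colon G\to\LB(\R^\infty,\R^\infty)=\LI$, continuity for the function-space topology following by adjunction from continuity of the action map $G\times\R^\infty\to\R^\infty$. The $G$-representation underlying $\rho$ is isomorphic to $\mathcal{U}_G$, hence faithful, so $\rho$ is injective; and since $G$ is compact and $\LI$ is weak Hausdorff, the image $\rho(G)$ is compact Hausdorff and $\rho$ is a homeomorphism onto it. Therefore $\rho(G)$ is a subgroup of $\LI$ isomorphic as a topological group to $G$ — in particular compact and inheriting a Lie group structure — on which $\R^\infty$ restricts to a complete universe; that is, $\rho(G)$ is a universal subgroup isomorphic to $G$.

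For the second assertion, let $G,G'\le\LI$ be universal subgroups and $\alpha\colon G\to G'$ an isomorphism of topological groups. I would compare two continuous orthogonal $G$-representation structures on $\R^\infty$: the standard one $\beta_1$ coming from the inclusion $G\hookrightarrow\LI$, and the twisted one $\beta_2$ given by the composite $G\xrightarrow{\alpha}G'\hookrightarrow\LI$. Both are complete $G$-universes: $\beta_1$ because $G$ is universal, and $\beta_2$ because $G'$ is universal while precomposition with the isomorphism $\alpha$ identifies $G'$-representations with $G$-representations compatibly with linear isometric embeddings into $\R^\infty$, so that every finite-dimensional $G$-representation embeds into $(\R^\infty,\beta_2)$. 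Uniqueness of the complete $G$-universe then supplies a $G$-equivariant linear isometric isomorphism $\psi\colon(\R^\infty,\beta_1)\to(\R^\infty,\beta_2)$, i.e. $\psi\circ\beta_1(g)=\beta_2(g)\circ\psi$ for all $g\in G$. As an isometric isomorphism of $\R^\infty$, $\psi$ is an invertible element of $\LI$, and the equivariance condition is precisely $\alpha(g)=\psi\, g\,\psi^{-1}$ for all $g\in G$.

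The only genuinely non-formal point, I expect, is the point-set topology in the first assertion: checking that $\rho$ is continuous for the function-space topology on $\LI$ and that it is a homeomorphism onto its image, so that the subspace topology on $\rho(G)$ coincides with the Lie group topology of $G$. The conjugation statement is then a clean corollary of the uniqueness of complete universes; the only thing to verify there is that the twisted structure $\beta_2$ is again a complete universe, which is immediate because $\alpha$ is an isomorphism.
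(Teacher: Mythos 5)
Your proof is correct and follows essentially the same route as the source the paper cites for this lemma (the paper gives no proof of its own, referring to \cite[Prop.~1.5]{sch19}): realize a complete $G$-universe $\bigoplus_\lambda \lambda^{\oplus\N}$ on $\R^\infty$ by transporting along an isometry to obtain the universal subgroup, and deduce the conjugation statement from the uniqueness of complete $G$-universes up to equivariant isometric isomorphism applied to the two $G$-actions on $\R^\infty$ obtained from the inclusion and from twisting by $\alpha$. The point-set issues you flag (continuity of $\rho$ via the exponential adjunction, and that a continuous injection from a compact group into the weak Hausdorff space $\LI$ is a closed embedding) are handled correctly.
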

Let $G$ be a universal subgroup of $\LI$. %We explain how to associate an $\LI$-space to a $G$-space.
The forgetful functor $\LI\dash\Top \to G\dash\Top$ admits a left adjoint. For a $G$-space $A$ this left adjoint is given by $\LI \times_G A$ where $G$ acts on $\LI$ by pre-composition with an inverse.
\begin{definition}[Global equivalence]
	A map $f \colon A \to B$ of $\LI$-spaces is called a \defn{global equivalence} if $f^G \colon A^G \to B^G$ is a weak equivalence for all universal subgroups $G$ of $\LI$.
\end{definition}
\begin{remark}[Model structures on $\LI\dash\Top$] 
	The global equivalences are the weak equivalences in two model structures on $\LI\dash\Top$. 
	
	The \defn{universal projective model structure} is the model structure where a map $f \colon X \to Y$ is a fibration if the map $f^G \colon X^G \to Y^G$ is a Serre fibration for all universal subgroups $G$ of $\LI$, see \cite[Prop. 1.11]{sch19}.
	
	The \defn{global model structure} on $\LI\dash\Top$ arises as a left Bousfield localization at the global equivalences on the projective model structure with respect to all compact Lie subgroups, see \cite[Thm. 1.20]{sch19} for more details. 
	
	%The identity functor is a left Quillen functor from the universal projective model structure to the global model structure. 
	A comparison between these two model structures can be found in \cite[Rem. 2.7]{sch19}.
\end{remark}
There is an analog of Elmendorf's theorem \cite[Thm. 1]{elm83} for $\LI$-spaces. Roughly speaking, the fixed point spaces for universal subgroups and certain maps between them determine the global homotopy type of an $\LI$-space. 
\begin{definition}[$\Ogl$-space]
	Let $\Ogl$ be the topological category with 
	\begin{itemize}
		\item objects all universal subgroups of $\LI$,
		\item the morphism space between two universal subgroups $G$ and $H$ of $\LI$ given by \[ \Ogl(H,G) = \map^\LI(\LI/H,\LI/G) \cong \left(\LI/G \right)^H \,. \]
	\end{itemize}
	The composition is defined by composition of $\LI$-spaces.
	%\end{definition}
	%\begin{definition}[$\Ogl$-space]
	An \defn{$\Ogl$-space} is a continuous functor \[(\Ogl)^{\op} \longrightarrow \Top \,.\] A \defn{morphism} of $\Ogl$-spaces is a continuous natural transformation. 
	We write $\Top_{\Ogl}$ for the category of $\Ogl$-spaces.
\end{definition}
\begin{remark}[Projective model structure on $\Top_{\Ogl}$]
	We endow $\Top_{\Ogl}$ with the \defn{projective model structure}, i.e. the weak equivalences (or fibrations, respectively) are those maps which are levelwise weak equivalences (or Serre fibration, respectively). The projective model structure on a topologically enriched functor category is established in \cite[Sec. 5]{pia91}. 
\end{remark}
\begin{definition}[Fixed point functor]
	There is a \defn{fixed point functor} \[\Phi \colon \LI\dash\Top \longrightarrow \Top_{\Ogl} \] with value at an $\LI$-space $X$ given by \[ \Phi(X)(K)=\map^{\LI}(\LI/K,X) \cong X^K \] for a universal subgroup $K$ of $\LI$. The structure maps act by pre-composition and functoriality is given by post-composition.
\end{definition}
\begin{theorem} \label{elmendorfLI}
	The fixed point functor $\Phi$ admits a left adjoint \[\Lambda \colon \Top_{\Ogl} \longrightarrow \LI\dash\Top \,. \] Moreover, this pair is a Quillen equivalence between $\Top_{\Ogl}$ together with the projective model structure and $\LI\dash\Top$ together with the universal projective model structure.
\end{theorem}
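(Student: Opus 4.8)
The plan is to recognize this as an instance of the general "enriched Elmendorf theorem" / Morita theory for topologically enriched functor categories, and to construct $\Lambda$ as an enriched left Kan extension. First I would observe that $\Ogl$ embeds into $\LI\dash\Top$ as the full topological subcategory on the orbits $\LI/G$ for $G$ universal, via the Yoneda-type identification $\Ogl(H,G)=\map^\LI(\LI/H,\LI/G)$. The fixed point functor $\Phi$ is then literally the restricted-Yoneda functor $X\mapsto \map^\LI(\LI/-,X)$. Its left adjoint $\Lambda$ is forced to be the enriched left Kan extension along the inclusion $\Ogl\hookrightarrow\LI\dash\Top$; concretely, for an $\Ogl$-space $Y$ one sets
\[
\Lambda(Y)=\int^{G\in\Ogl} Y(G)\times (\LI/G),
\]
a coend in $\LI\dash\Top$, where $G$ ranges over universal subgroups. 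That $(\Lambda,\Phi)$ is an adjoint pair is then the standard coend/hom adjunction, combined with the enriched Yoneda lemma which gives $\Phi(\Lambda(Y))(K)\cong$ the value that makes the unit and counit work; in particular one checks $\Phi\circ\Lambda$ restricted to representables is the identity, so the counit is a levelwise isomorphism on representables.

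Next I would verify the Quillen pair statement. Since both model structures are built so that fibrations and weak equivalences are detected by the functors $X\mapsto X^G=\map^\LI(\LI/G,X)$ (universal projective side) respectively by levelwise evaluation at objects $G$ of $\Ogl$ (projective side), the functor $\Phi$ preserves fibrations and acyclic fibrations essentially by definition: $\Phi(f)$ evaluated at $K$ is $f^K$, and $f$ is a universal projective fibration (resp.\ acyclic fibration) exactly when each $f^G$ is a Serre fibration (resp.\ acyclic Serre fibration). Hence $(\Lambda,\Phi)$ is a Quillen adjunction with no further work. For the equivalence, I would use the criterion that a Quillen adjunction is a Quillen equivalence if $\Phi$ reflects weak equivalences between fibrant objects and the derived unit is a weak equivalence on all cofibrant objects. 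The first is immediate since in the universal projective structure \emph{every} object is fibrant and $\Phi$ detects weak equivalences by construction. For the derived unit: since $\Phi$ already preserves all weak equivalences, it suffices to show the unit $Y\to\Phi(\Lambda(Y))$ is a weak equivalence for cofibrant $Y$. By the usual cell-induction argument one reduces to $Y$ a representable $\Ogl(-,G)$ (and coproducts/pushouts/transfinite compositions thereof, using that $\LI\dash\Top$ and $\Top_{\Ogl}$ are nice enough — left proper, cofibrantly generated, geometric realizations behave well), and on a representable the unit is an isomorphism by the enriched Yoneda lemma computation from the previous paragraph.

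The main obstacle I anticipate is the cell-induction step verifying the derived unit is an equivalence on all cofibrant $\Ogl$-spaces: one must know that the generating cofibrations of the projective model structure on $\Top_{\Ogl}$ are the maps $\Ogl(-,G)\times(S^{n-1}\hookrightarrow D^n)$, that $\Lambda$ sends these to projective cofibrations of $\LI$-spaces (so that $\Lambda(Y)$ is genuinely cofibrant and the pushout-products interact correctly with fixed points), and that taking $G$-fixed points of the relevant pushouts and sequential colimits of $\LI$-spaces commutes with the colimits up to weak equivalence — this uses that these are closed inclusions built from CW-type data, together with \Cref{productfgr}-style compatibility of the constructions with products. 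The comparison of the two model structures on $\LI\dash\Top$ (universal projective vs.\ global) is not needed here, since the statement is only about the universal projective one; I would simply cite \cite[Prop.\ 1.11]{sch19} for its existence and the characterization of its fibrations and weak equivalences, and \cite[Sec.\ 5]{pia91} for the projective structure on $\Top_{\Ogl}$. Modulo these technical points, the theorem is the standard enriched Elmendorf/Morita recognition theorem, and I would present it as such, pointing to \cite[Thm.\ 1]{elm83} as the prototype.
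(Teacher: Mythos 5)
Your proposal is sound and follows the standard enriched-Elmendorf argument: $\Lambda$ as the coend/left Kan extension along the orbit inclusion $\Ogl\hookrightarrow\LI\dash\Top$, the Quillen pair read off from the definitions of the two model structures, and the derived unit checked by Yoneda on representables plus cell induction, with the genuine technical content (fixed points commuting with pushouts along closed inclusions and with sequential colimits) correctly identified. The paper itself gives no proof of this statement --- it simply cites \cite[Thm.~2.5]{sch19} together with \cite[Rem.~2.7]{sch19} for the model-structure comparison --- so your outline is in effect a reconstruction of the argument behind the cited result rather than a divergence from the paper's approach.
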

\begin{proof}
	This is proven in \cite[Thm. 2.5]{sch19}. Moreover, \cite[Rem. 2.7]{sch19} is an additional note on the model structures.
\end{proof}
\subsection{\texorpdfstring{$\Orb$}{Orb}-spaces} \label{orborb}
The idea of $\Orb$-spaces is also inspired by Elmendorf's theorem and hence quite similar to $\Ogl$-spaces. We are just using another indexing category.
The following is taken from \cite[Sec. 4]{gh07}.
\begin{definition}[$\Orb$-space]
	Let $\Orb$ be the topological category with 
	\begin{itemize}
		\item objects all universal subgroups of $\LI$,
		\item the morphism space between two universal subgroups $G$ and $H$ of $\LI$ given by \[| \gmap(\B H, \B G) | \cong \map(H,G) \times_G \EC G \,.\] Here, $\B G$ denotes the groupoid associated to the group $G$, see \Cref{examplegroupoids} and~$\gmap$ is the mapping groupoid, see \Cref{deftopgrpd}.
	\end{itemize}
	Let $G,H$ and $K$ be universal subgroups of $\LI$. The composition map is defined as the composition of the natural homeomorphism \[ | \gmap(\B H,\B K) | \times | \gmap(\B G,\B H) | \cong | \gmap(\B H, \B K) \times  \gmap(\B G, \B H) |\]
	from \Cref{productfgr} and \[ | c | \colon  |\gmap(\B H,\B K) \times \gmap(\B G, \B H) | \longrightarrow | \gmap(\B G,\B K) |\] where $c$ is the composition in $\TopGrpd$. The identity of $G$ in $\Orb$ is the $0$-simplex in~$| \gmap(\B G, \B G) |$ representing the identity.
	%\begin{definition}[$\Orb$-space] 
	An \defn{$\Orb$-space} is a continuous functor \[\Orb^{\op} \longrightarrow \Top\,.\]
	A \defn{morphism} of $\Orb$-spaces is a continuous natural transformation. We write $\Top_{\Orb}$ for the category of $\Orb$-spaces. 
	%\end{definition}
\end{definition}
We could also use all compact Lie groups as objects. The resulting category will be Dwyer-Kan equivalent to $\Orb$ as defined above. However, our definition makes it easier to compare $\Orb$-spaces to $\LI$-spaces.
\begin{remark}%\todo{okay like this?} 
	Gepner and Henriques use the fat geometric realization in their definition of the category $\Orb$. The fat geometric realization does not commute with products on the nose, i.e. the map $\|\G \times \HB\| \to \|\G\| \times \|\HB\|$ is just a weak equivalence. Gepner and Henriques assume the existence of natural retracts of these maps, see \cite[Rem. 2.23]{gh07}. It is not clear to the author why such retracts exist. 
\end{remark}
\begin{definition}[Weak equivalence]
	A morphism of $\Orb$-spaces $f \colon X \to Y$ is a \defn{weak equivalence} if $f(K) \colon X(K) \to Y(K)$ is a weak equivalence for all universal subgroups $K$ of $\LI$.
\end{definition}
\begin{remark}[Projective model structure on $\Top_{\Orb}$]
	The class of weak equivalences are the weak equivalences of the \defn{projective model structure} on $\Orb$-spaces. The fibrations are levelwise Serre fibrations. The projective model structure on a topologically enriched functor category is established in \cite[Sec. 5]{pia91}. 
\end{remark}
\begin{definition} \label{Rfunctor}
	We recall the functor \[F \colon \TopGrpd \longrightarrow \Top_{\Orb}  \] from \cite[Sec. 4.2]{gh07}. 
	On objects it is given by \[F(\G)(K) = | \gmap(\B K,\G) |\] for a topological groupoid $\G$ and a universal subgroup $K$ of $\LI$. The map on morphism spaces \[ | \gmap(\B K,\B H) | \longrightarrow \map\left(|\gmap(\B H,\G) | ,| \gmap(\B K, \G)|\right) \] for another universal subgroup $H$ of $\LI$ is defined as the adjoint of the map \[ |\gmap(\B K,\B H)| \times | \gmap(\B H,\G) | \longrightarrow |\gmap(\B K,\G)|  \] again using the composition of the homeomorphism from \Cref{productfgr} and the induced morphism of composition of groupoids. 
	
	A morphism between two groupoids $\G$ and $\HB$ induces by post-composition and the functoriality of the geometric realization a morphism between $F(\G)$ and $F(\HB)$.
\end{definition}
%Let $f \colon \B K \to \G$ be a morphism of topological groupoids where $K$ is a Lie group. 
%The map $f_1 \colon K \to \G_1$ factors through the stabilizer $\G_x$ for $x=f_0(\ast)$ via a continuous group homomorphism $K \to \G_x$. If $\G$ is a Lie groupoid, $\G_x$ admits the structure of a Lie group. The map $K \to \G_x$ is automatically smooth and so is $K \to \G_1$. We therefore don't need to distinguish between smooth and continuous maps between $\B K$ and $\G$. 
\subsection{Comparison between \texorpdfstring{$\LI$}{L}-spaces and \texorpdfstring{$\Orb$}{Orb}-spaces} \label{comparison}
Körschgen's paper \cite{kor16} provides a zig-zag of Quillen equivalences between $\Top_{\Ogl}$ and $\Top_{\Orb}$. We briefly recall some important definitions and results.

Körschgen is working with right $G$-spaces in his definition of the category~$\Orb^\prime$ while we prefer to work with left $G$-spaces only. To be consistent, we consider right $G$-spaces as left $G$-spaces via $g.x = x.g^{-1}$ for $x \in A$ and $g \in G$ where $A$ is a right~$G$-spaces.
\begin{comment}
\begin{remark} \label{leftright}
Körschgen is working with right $G$-spaces in his definition of the category~$\Orb^\prime$ while we prefer to work with left $G$-spaces only. To be consistent, we consider right $G$-spaces as left $G$-spaces via $g.x = x.g^{-1}$ for $x \in A$ and $g \in G$ where $A$ is a right~$G$-spaces. The right action groupoid $A \rtimes G$ which is defined similarly as in \Cref{examplegroupoids} is then isomorphic to $G \ltimes A$ by \begin{align*}
G \times A & \longrightarrow A \times G \\
(g,a) & \longmapsto (a,g^{-1})
\end{align*}
on the $1$-level.

This will especially be used when working with $A=\tilde \EC(H,G)$. 
\end{remark}
\end{comment}
\begin{notation}[Graph subgroup]
	Let $H$ and $G$ be two topological groups. Let $L$ be a closed subgroup of $H$ and let $\alpha \colon L \to G$ be a continuous group homomorphism. The \defn{graph subgroup} of $\alpha$ is the subgroup of $H \times G$ of all elements of the form $(h,\alpha(h))$ for $h \in L$. For an $(H \times G)$-space $A$ we write $A^\alpha$ for the fixed point space of $A$ of the graph subgroup of $\alpha$.
\end{notation}
\begin{definition}[$\Orb^\prime$-spaces] \label{Orbprime}
	Let $G$ and $H$ be universal subgroups of $\LI$.
	Let $H\times G$ act on $\LI$ via $(h,g).\varphi = h\varphi g^{-1}$. Define the space \[\tilde \EC(H,G) = \setdef{(\alpha,\varphi) \in \map(H,G) \times \LI}{\varphi \in \LI^\alpha} \,. \] The group $G$ acts on $\tilde \EC(H,G)$ via $g.(\alpha,\varphi)=(\alpha^g,\varphi g^{-1})$ where $(\alpha^g)(h)=g\alpha(h)g^{-1}$ for~$h \in H$.
	We define the category $\Orb^\prime$ with objects all universal subgroups of $\LI$ and morphism space \[\Orb^\prime(H,G) = \tilde E(H,G) \times_G \EC G \,. \] for two universal subgroups $G$ and $H$ of $\LI$. The composition is defined in \cite[Sec. 3.1]{kor16}.
	
	The category $\Top_{\Orb^{\prime}}$ of \defn{$\Orb^\prime$-spaces} is the category of continuous functors \[(\Orb^\prime)^{\op} \longrightarrow \Top\] together with continuous natural transformations. We equip this category with the projective model structure.
\end{definition}
It is proven in \cite[Prop. 2.18]{kor16} that the projection map $\tilde E(H,G) \to \map(H,G)$ gives rise to a weak equivalence \[\Orb^\prime(H,G) = \tilde E(H,G) \times_G \EC G \longrightarrow \map(H,G) \times_G \EC G \cong \Orb(H,G) \,. \] This map induces a functor $f_1 \colon \Orb^\prime \to \Orb$. 
On the other hand, the canonical map \[\Orb^\prime(H,G) = \tilde E(H,G) \times_G \EC G  \longrightarrow \tilde E(H,G)/G \] is a weak equivalence by \cite[Prop. 2.22]{kor16} and gives rise to a functor $f_2 \colon \Orb^\prime \to \Ogl$ because \begin{align*}
\tilde E(H,G)/G & \longrightarrow \left(\LI/G \right)^H \cong \Ogl(H,G) \,, \\ 
[\alpha,\varphi] & \longmapsto [\varphi]
\end{align*}
is well-defined and a homeomorphism by \cite[Prop. 2.20]{kor16}.
Finally, it is shown that \begin{align*}(f_1)^\ast \colon & \Top_{\Orb} \longrightarrow \Top_{\Orb^\prime}\shortintertext{and} (f_2)^\ast \colon & \Top_{\Ogl} \longrightarrow \Top_{\Orb^\prime}\end{align*} are left Quillen functors and both part of a Quillen equivalence. Their right adjoints will be denoted by $(f_1)_{!}$ and $(f_2)_{!}$, respectively.

%We summarize the proceeding discussion in the following theorem.
%\begin{theorem} \label{zigzag}
%	There is a zig-zag of Quillen equivalences between $\Top_{\Orb}$ and $\Top_{\Ogl}$ equipped with the respective projective model structures. 
%\end{theorem} 
\section{Orbifolds as Orbispaces}
We will give two constructions to associate an orbispace to an orbifold. The first one in \Cref{effli} does only work for effective orbifolds but it is easier to compute in practice. In \Cref{grouporborb} we define a functor $\Orbfld \to \Top_{\Orb}$ which is extended to a functor $\Orbfld \to \LI\dash\Top$ in \Cref{comparison2} but it is difficult to compute the value of this functor at an orbifold because it uses a cofibrant replacement. 
\subsection{Effective Orbifolds as \texorpdfstring{$\LI$}{L}-spaces} \label{effli}
Every effective orbifold is diffeomorphic to an effective global quotient orbifold by \Cref{efforbaregq}. Therefore, we may define the associated $\LI$-space to an effective orbifold as the corresponding model in $\LI$-spaces for the quotient.
\begin{definition}[Assignment $\effli$] \label{functorLI}
	Let $\chi$ be an effective orbifold of dimension $n$. Define \[\effli(\chi)=\LI \times_{\OG(n)} \Fr(\chi) \] for an embedding of $\OG(n)$ in $\LI$ as a universal subgroup.
\end{definition}
This definition depends on a choice of a Riemannian metric on $\chi$ and on the embedding of $\OG(n)$ into $\LI$. By \Cref{framebundleindependent} and \Cref{allareuniversal}, two differenct choices lead to $\LI$-equiviarently homeomorphic $\LI$-spaces. 

An effective global quotient orbifold $M \gq G$ should be represented by the $\LI$-space $\LI \times_G M$, up to global equivalence. 
\begin{proposition} \label{RC1m0}
	Let $G$ be universal subgroup of $\LI$ which acts almost freely and effectively on slices on a manifold $M$. Then the $\LI$-space $\effli(M \gq G)$ is $\LI$-equivariantly homotopy equivalent to $\LI \times_G M$.
\end{proposition}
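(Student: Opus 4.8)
The plan is to compare the two $\LI$-spaces via the zig-zag $\LI\times_G M \leftarrow \LI\times_G \Fr_G(M) \to \LI\times_{\OG(n)}\Fr_G(M)/G \cong \effli(M\gq G)$, where $\Fr_G(M)$ is the manifold of orthonormal $(m-n)$-frames transverse to the orbits introduced in \Cref{framebundleofMG}, and to argue that both maps are $\LI$-equivariant homotopy equivalences. The key input is the commutative square in \Cref{framebundleofMG}, which identifies $\Fr(M\gq G)$ with $\Fr_G(M)/G$ and exhibits $\Fr_G(M) \to M$ as a principal $\OG(m-n)$-bundle and $\Fr_G(M)\to \Fr(M\gq G)$ as a principal $G$-bundle; combined with \Cref{formanifolds} (independence of the metric up to $(\OG(n)\times G)$-equivariant diffeomorphism, which here appears as the $\OG(m-n)$-vs-$\OG(n)$ bookkeeping on $\Fr_G(M)$) this lets me choose the metric freely.

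First I would recall that $\Fr_G(M) \to M$ is a principal $\OG(m-n)$-bundle with a contractible structure group, so it admits a $G$-equivariant section up to $G$-equivariant homotopy; more precisely, since $\OG(m-n)$ is not contractible this is false on the nose, so instead I argue as follows. Apply $\LI\times_{\OG(n)}(-)$ to the top row of the square in \Cref{framebundleofMG}: we get $\LI\times_{\OG(n)}\Fr_G(M) \to \LI\times_{\OG(n)} M$. But the $\OG(n)$-action on $M$ is trivial (it only lives on the frame directions), so the target is $\LI\times_{\OG(n)} M \cong (\LI/\OG(n))\times M$. Hmm — that is not quite $\LI\times_G M$. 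The cleaner route: realize $\effli(M\gq G) = \LI\times_{\OG(n)} \Fr(M\gq G) = \LI\times_{\OG(n)} (\Fr_G(M)/G)$. Since the $G$- and $\OG(m-n)$-actions on $\Fr_G(M)$ commute and the $\OG(n)$-action on $\Fr_G(M)/G$ is the residual one coming from the $\OG(n)\times G$-action on $\Fr_G(M)$ of \Cref{framebundleofMG}, I can rewrite
\[
\effli(M\gq G) \;\cong\; \LI\times_{\OG(n)}\bigl(\Fr_G(M)/G\bigr) \;\cong\; \bigl(\LI\times_{\OG(n)}\Fr_G(M)\bigr)\big/ G,
\]
and likewise the desired target is $\LI\times_G M$. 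So it suffices to produce a $G$-equivariant (and $\LI$-equivariant) homotopy equivalence $\LI\times_{\OG(n)}\Fr_G(M) \simeq \LI\times M$ that intertwines the two residual $G$-actions; taking $G$-quotients (which preserve $\LI$-equivariant homotopy equivalences, as $G$ acts freely enough here) then gives the claim.

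The map $\LI\times_{\OG(n)}\Fr_G(M) \to \LI\times M$ comes from $\Fr_G(M)\to M$ on the second coordinate. Its homotopy inverse will be built using that $\OG(n)$ is a universal subgroup of $\LI$ together with the fact that $\Fr_G(M) \to M$ is an $\OG(m-n)$-principal bundle whose total space, after smashing with $\LI$ on the $\OG(n)$-side, becomes $\OG(n)$-equivariantly contractible over $M$ in the relevant sense — concretely, $\LI \times_{\OG(n)} \OG(m-n) $-bundles are trivial up to equivariant homotopy because $\LI$ is $\OG(n)$-equivariantly contractible and $\OG(m-n) \subseteq \OG(n)$. I expect the main obstacle to be precisely this step: carefully setting up the equivariant bundle-theoretic argument so that the homotopy inverse is simultaneously $\LI$-equivariant and $G$-equivariant, and verifying the homotopies descend to the $G$-quotient. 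Once that is in place, the chain of homeomorphisms above together with \Cref{formanifolds} and \Cref{allareuniversal} (to handle the metric and the choice of embedding $\OG(n)\hookrightarrow\LI$) finishes the proof.
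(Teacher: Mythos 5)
Your reduction is the same as the paper's: work with the $(\OG(m-n)\times G)$-manifold $\Fr_G(M)$ from \Cref{framebundleofMG}, whose two quotients are $M$ (by $\OG(m-n)$) and $\Fr(M\gq G)$ (by $G$), and compare $\LI\times_{\OG(m-n)}(\Fr_G(M)/G)$ with $\LI\times_G(\Fr_G(M)/\OG(m-n))$. But the heart of the proof is missing. You defer exactly the step that carries all the content: \enquote{carefully setting up the equivariant bundle-theoretic argument so that the homotopy inverse is simultaneously $\LI$-equivariant and $G$-equivariant.} The paper's mechanism for this is \Cref{fexLhom}: because $\LI$ is a universal space for the family of graph subgroups of $H\times G$ and $\Fr_G(M)$ admits an $(\OG(m-n)\times G)$-CW structure (Illman's theorem, which you never invoke but which is needed to run the lifting argument), there is an $(H\times G)$-equivariant map $a\colon A\to\LI$; the comparison map is then the explicit twist $[\varphi,[x]]\mapsto[\varphi a(x),[x]]$, and the homotopies come from the $G$-equivariant contractibility of $\LI$ (\Cref{Lhom}). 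Your sketch of the inverse via \enquote{$\LI\times_{\OG(n)}\OG(m-n)$-bundles are trivial up to equivariant homotopy} gestures at the same contractibility but never produces a well-defined map; note also that your first arrow $\LI\times_{\OG(n)}\Fr_G(M)\to\LI\times M$, \enquote{$\Fr_G(M)\to M$ on the second coordinate,} is not well defined, since the $\LI$-coordinate of a class in $\LI\times_{\OG(n)}\Fr_G(M)$ is only determined up to right multiplication by the structure group --- undoing this ambiguity is precisely what the equivariant section $a$ is for.

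Two further points. First, your asymmetric factorization (prove $\LI\times_{\OG(m-n)}\Fr_G(M)\simeq\LI\times M$ with all maps and homotopies $G$-equivariant, then pass to $G$-quotients) is workable in principle but strictly harder to set up than the paper's symmetric statement $\LI\times_G A/H\simeq\LI\times_H A/G$, which applies both quotients at once and only ever needs $\LI$-equivariance of the final maps; in your version every homotopy must additionally be checked to be $G$-equivariant before descending. Second, you conflate $\OG(n)$ with $\OG(m-n)$ throughout (the claim $\OG(m-n)\subseteq\OG(n)$ is false in general and is not needed); the group appearing in $\effli(M\gq G)$ is $\OG(m-n)$, the structure group of the frame bundle of the $(m-n)$-dimensional orbifold $M/G$.
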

We first need to prove two lemmas.
\begin{lemma} \label{fexLhom}
	Let $G$ and $H$ be universal subgroups of $\LI$. Let $A$ be an $(H \times G)$-CW complex such that the restricted $G$-action is free. There exists an $(H \times G)$-equivariant continuous map $a \colon A \to \LI$ where $H \times G$ acts on $\LI$ via $(h,g).\varphi=h\varphi g^{-1}$ for $g \in G, h \in H$ and~$\varphi \in \LI$.
\end{lemma}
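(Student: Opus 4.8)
The plan is to build the map $a\colon A\to\LI$ by equivariant obstruction theory over the cells of $A$, exploiting the freeness of the restricted $G$-action together with the fact that $\R^\infty$ is a complete universe for every universal subgroup. First I would reduce to the case where $A$ is a single $(H\times G)$-cell, i.e.\ of the form $(H\times G)/L \times D^k$, and observe that because the $G$-action is free, the subgroup $L\le H\times G$ intersects $1\times G$ trivially; hence $L$ is the graph subgroup of a continuous homomorphism $\alpha\colon L'\to G$ for some closed subgroup $L'\le H$ (project $L$ to $H$). By the adjunction between restriction and induction, an $(H\times G)$-equivariant map $(H\times G)/L\times D^k\to\LI$ is the same as an $L$-fixed point of $\map(D^k,\LI)$, and since $D^k$ is contractible this amounts to producing a single element of $\LI^L=\LI^\alpha$, i.e.\ a linear isometric self-embedding $\varphi\colon\R^\infty\to\R^\infty$ with $\varphi(h.v)=\alpha(h).\varphi(v)$ for $h\in L'$. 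Such a $\varphi$ exists: regard $\R^\infty$ (with the $H$-action) as a complete $H$-universe, restrict to $L'$, and regard $\R^\infty$ (with the $G$-action, pulled back along $\alpha$) as an $L'$-representation; both are countably infinite-dimensional, and completeness of the $H$-universe ensures every $L'$-representation embeds $L'$-equivariantly into $\R^\infty|_{L'}$, which gives the required $\varphi$. So the map can be constructed on each cell, and on $H\times G$-orbits of points as well.

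Next I would run the cell-by-cell extension. Assuming $a$ has been defined and is $(H\times G)$-equivariant and continuous on the $(k-1)$-skeleton $A^{(k-1)}$, consider attaching a $k$-cell along an equivariant map $(H\times G)/L\times S^{k-1}\to A^{(k-1)}$. We must extend the composite $(H\times G)/L\times S^{k-1}\to\LI$ over $(H\times G)/L\times D^k$; by the same adjunction this is the problem of extending an $L$-equivariant map $S^{k-1}\to\LI$ (where $L$ acts on $\LI$ through its graph data, as above, and trivially on $S^{k-1}$) to $D^k$. The key point is that the fixed-space $\LI^\alpha$ is \emph{contractible} — this is exactly the statement underlying the weak equivalence $\tilde E(H,G)\to\map(H,G)$ used in \Cref{Orbprime}, and more primitively it is the classical fact (Boardman–Vogt / the theory behind the linear isometries operad) that the space of $L'$-equivariant linear isometries from one complete $L'$-universe to another is contractible. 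Given contractibility of $\LI^\alpha\supseteq$ the relevant target, a map $S^{k-1}\to\LI^\alpha$ extends over $D^k$; doing this simultaneously and equivariantly over all $k$-cells in a given orbit, and then over all orbits, extends $a$ to $A^{(k)}$. Taking the colimit over $k$ yields the desired $a\colon A\to\LI$ on all of $A$ (continuity is automatic since $A$ has the colimit topology and each $\map(D^k,\LI)$ is well-behaved).

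The main obstacle, and the step deserving the most care, is the contractibility input: one needs that for a closed subgroup $L'\le H$ and homomorphism $\alpha\colon L'\to G$, the space $\LI^\alpha$ of $\alpha$-equivariant linear isometric self-embeddings of $\R^\infty$ is contractible (in fact nonempty and contractible), so that all higher obstruction groups vanish and no choices at a lower stage can block an extension. I would handle this by citing the relevant statement from \cite{sch19} (the same fact that makes $\tilde E(H,G)\to\map(H,G)$ a weak equivalence, via \cite[Prop.~2.18]{kor16}, rests on it) rather than reproving it; concretely, $\LI^\alpha$ is a space of linear isometries between complete $L'$-universes, which is known to be contractible, and the convexity-style argument (interpolating two equivariant isometries after shifting one into an orthogonal copy) also gives contractibility directly. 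A secondary, purely bookkeeping point is to make sure the identification of cell stabilizers as graph subgroups and the restriction–induction adjunction are applied with the correct variance for left actions, matching the conventions fixed earlier in the paper; this is routine once stated carefully.
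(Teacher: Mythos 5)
Your proof is correct and is essentially the paper's argument unpacked: the paper simply cites that $\LI$ is $(H\times G)$-equivariantly homotopy equivalent to a universal space for the family of graph subgroups (\cite[Prop. A.10]{sch19} and \cite[Prop. 1.1.26(i)]{sch18}), i.e. $\LI^L$ is weakly contractible for graph subgroups $L$ and empty otherwise, notes that freeness of the restricted $G$-action forces every stabilizer of $A$ to be a graph subgroup, and then invokes \cite[Prop. B.11(i)]{sch18} for the existence of the equivariant map. Your cell-by-cell obstruction-theoretic extension is precisely the proof of that last citation, resting on the same contractibility input, so the two arguments coincide in substance.
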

\begin{proof}
	By \cite[Prop. A.10]{sch19} and \cite[Prop. 1.1.26(i)]{sch18}, $\LI$ is $(H \times G)$-equivariantly homotopy equivalent to a universal space for the class of graph subgroups, that are all subgroups $K$ of $H \times G$ such that $K \cap (\{1\} \times G)=\{(1,1)\}$. This includes in particular all stabilizer groups of $A$ because the restricted $G$-action was assumed to be free. The existence of the map $a$ follows from \cite[Prop. B.11(i)]{sch18}.
\end{proof}
\begin{lemma} \label{Lhom}
	Let $G$ and $H$ be universal subgroups of $\LI$. Let $A$ be an $(H \times G)$-CW complex such that the restricted actions of $G$ and $H$ on $A$ are free. Then $\LI\times_GA/H$ is $\LI$-equivariantly homotopy equivalent to $\LI\times_HA/G$.
\end{lemma}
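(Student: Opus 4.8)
The plan is to build mutually inverse $\LI$-equivariant homotopy equivalences $\Phi\colon\LI\times_G A/H\to\LI\times_H A/G$ and $\Psi$ in the other direction by hand, using \Cref{fexLhom} for the twisting data and the contractibility of $\LI$ for the homotopies. Here $\LI\times_G A$ and $\LI\times_H A$ are formed by restricting the $(H\times G)$-action on $A$ to the indicated factor, and each retains a residual action of the other factor and of $\LI$ (by post-composition on the $\LI$-coordinate). First I would apply \Cref{fexLhom} to $A$, whose restricted $G$-action is free, to obtain an $(H\times G)$-equivariant map $\nu\colon A\to\LI$ with $\nu(h.a)=h\,\nu(a)$ and $\nu(g.a)=\nu(a)\,g^{-1}$, and apply it again to $A$ regarded as a $(G\times H)$-CW complex, whose restricted $H$-action is free, to obtain $\mu\colon A\to\LI$ with $\mu(g.a)=g\,\mu(a)$ and $\mu(h.a)=\mu(a)\,h^{-1}$. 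Then I set
\begin{align*}
\Phi([\varphi,a])&=[\varphi\,\mu(a),a]\in\LI\times_H A/G\,,\\
\Psi([\varphi,a])&=[\varphi\,\nu(a),a]\in\LI\times_G A/H\,.
\end{align*}
The identity $\mu(g.a)=g\,\mu(a)$ converts the defining relation of $\LI\times_G(-)$ in the source into the residual $G$-relation in the target, and $\mu(h.a)=\mu(a)\,h^{-1}$ converts the residual $H$-relation in the source into the defining relation of $\LI\times_H(-)$ in the target, so $\Phi$ descends to the quotients; likewise $\Psi$. Both are visibly $\LI$-equivariant, since $\LI$ acts by post-composition on the first coordinate.

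Next I would prove $\Psi\Phi\simeq_{\LI}\id$ and $\Phi\Psi\simeq_{\LI}\id$. One computes $\Psi\Phi([\varphi,a])=[\varphi\,\mu(a)\nu(a),a]$, so it suffices to connect $a\mapsto\mu(a)\nu(a)$ to the constant map at $\id_{\R^\infty}$ through maps $\theta_t\colon A\to\LI$ with $\theta_t(h.a)=\theta_t(a)$ and $\theta_t(g.a)=g\,\theta_t(a)\,g^{-1}$: for such a path, $([\varphi,a],t)\mapsto[\varphi\,\theta_t(a),a]$ is a well-defined $\LI$-homotopy from $\id$ to $\Psi\Phi$. Now $\mu(a)\nu(a)$ does satisfy these two identities, because $\mu(h.a)\nu(h.a)=\mu(a)h^{-1}\,h\nu(a)=\mu(a)\nu(a)$ and $\mu(g.a)\nu(g.a)=g\,\mu(a)\nu(a)\,g^{-1}$, so $a\mapsto\mu(a)\nu(a)$ and the constant map both descend to $G$-equivariant maps from the $G$-CW complex $A/H$ (freeness of the $H$-action forces every cell of $A$ to have graph-subgroup isotropy, so $A/H$ is a $G$-CW complex) into $\LI$ equipped with the conjugation $G$-action. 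That conjugation action makes $\LI$ weakly $G$-contractible: restricting the $(G\times G)$-action $(g_1,g_2).\varphi=g_1\varphi g_2^{-1}$ — under which $\LI$ is a universal space for graph subgroups, by the fact recalled in the proof of \Cref{fexLhom} — along the diagonal $G\hookrightarrow G\times G$ gives the conjugation action, and $\{(k,k):k\in K\}$ is a graph subgroup of $G\times G$ for every $K\le G$. Hence $\map^G(A/H,\LI)$ is weakly contractible, in particular path-connected, and the desired path $\theta_t$ exists. The relation $\Phi\Psi\simeq_{\LI}\id$ follows by the symmetric argument, using that $\LI$ with the conjugation $H$-action is weakly $H$-contractible. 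Therefore $\Phi$ is an $\LI$-equivariant homotopy equivalence.

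I expect the main obstacle to be Step 2: there is no $\LI$-equivariant contraction of $\LI$ (the constant map is not $\LI$-equivariant for post-composition), so the homotopies cannot be produced by contracting the auxiliary copy of $\LI$ directly; the point that makes the argument go through is that the composite $\mu\nu$ automatically lands in the conjugation-equivariant maps, where $\LI$ \emph{is} equivariantly contractible. The remaining work — checking that $\Phi$, $\Psi$, and the homotopy $([\varphi,a],t)\mapsto[\varphi\,\theta_t(a),a]$ are well-defined on the balanced products, and that $A/H$, $A/G$ are CW in the appropriate equivariant sense — is routine bookkeeping along the lines indicated above.
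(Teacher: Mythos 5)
Your proposal is correct and follows essentially the same route as the paper: choose the two $(H\times G)$- and $(G\times H)$-equivariant maps $A\to\LI$ via \Cref{fexLhom}, twist the $\LI$-coordinate to define the two maps, and observe that the composite is governed by a map $A/H\to\LI$ that is equivariant for the \emph{conjugation} action, for which $\LI$ is equivariantly contractible. The only cosmetic difference is that the paper fixes a single $G$-equivariant contraction $k\colon\LI\times[0,1]\to\LI$ and post-composes, whereas you invoke path-connectedness of $\map^G(A/H,\LI)$; these amount to the same thing.
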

\begin{proof} Throughout this proof, $x$ denotes a point in $A$, $g$ an element of $G$ and $h$ and element of $H$.
	Using \Cref{fexLhom}, choose an $(H \times G)$-equivariant continuous map $a \colon A \to \LI$. Consider the map \begin{align*}
	\overline a \colon \LI \times A/G & \longrightarrow \LI\times_G A/H  \,,\\
	(\varphi,[x]) & \longmapsto [\varphi a(x),[x]] \,.
	\end{align*}
	This map is independent of the choice of the representative of $[x]$ because \[[\varphi a(g.x),[g.x]]=[\varphi a(x)g^{-1},g.[x]]=[\varphi a(x),[x]]  \,. \] Furthermore, $\overline a$ induces a map \[\hat a \colon \LI \times_H A/G \longrightarrow \LI \times_G A/H\] because \[ \overline a(\varphi h^{-1},h.[x]) = [\varphi h^{-1} a(h.x),[h.x]] = [\varphi a(x),[x]] = \overline a(\varphi,x) \,. \]
	Similarly, we choose a $(G \times H)$-equivariant continuous map $b \colon A \to \LI$ and obtain a map \[\hat b \colon \LI \times_G A/H \longrightarrow \LI \times_H A/G \]which is defined analogously. 
	We prove that $\hat a$ and $\hat b$ are mutually inverse $\LI$-equivariant homotopy equivalences. We define a map $c \colon A \to \LI$ via $c(x)=b(x)a(x)$. We use that \[c((g,h).x)=gc(x)g^{-1} \,. \] Hence $c$ is $H$-invariant and $G$-equivariant for the diagonal $G$-action on $\LI$. Therefore,~$c$ factors through a $G$-equivariant continuous map $\tilde c \colon A/H \to \LI$ via the canonical projection~$A \to A/H$.
	We write $\hat c$ for $\hat a \circ \hat b$. Note that the following equality holds true: \[\hat c([\varphi,[x]]) = [\varphi \tilde c([x]),[x]] \,.\] The space $\LI$ is $G$-equivariantly contractible by \cite[Proposition 1.1.21]{sch18} together with \cite[Proposition A.10]{sch19}. Choose a $G$-equivariant homotopy \[k \colon \LI \times [0,1] \longrightarrow \LI\] such that $k(\varphi,0)=\varphi$ and $k(\varphi,1)=\id$. Let $t$ denote an element in~$[0,1]$. Define an $\LI$-equivariant homotopy \begin{align*}
	\left(\LI \times_{G} A/H \right) \times [0,1] & \longrightarrow \LI \times_G A/H \,,\\
	([\varphi,[x]],t) & \longmapsto \left[\varphi k(\hat c([x]),t),[x] \right]
	\end{align*} 
	from $\hat c = \hat a \circ \hat b$ to the identity. The well-definiteness can be shown by a similar computation as for $\hat a$ by using that $k(\hat c(-),t) \colon A/H \to \LI$ is $G$-equivariant for every fixed $t \in [0,1]$. 
	%The map is continuous because \[(\LI \times_G A/H) \times [0,1] \cong (\LI \times [0,1]) \times_G A/H\] where $[0,1]$ is equipped with the trivial $G$-action.
	
	Similarly, $\hat b \circ \hat a$ is also $\LI$-equivariantly homotopic to the identity on $\LI \times_H A/G$.
\end{proof}
\begin{comment}
\begin{remark}
In the special case $H=0$, the lemma implies the well-known result that the homotopy quotient of a free $G$-CW complex is weakly equivalent to the orbit space. 
\end{remark}
\end{comment}
\begin{proof}[Proof of \Cref{RC1m0}]
	Let $k$ denote the integer $\dim M-\dim G$. We use that $\Fr(M \gq G) \cong \Fr_G(M)/G$ as explained in \Cref{framebundleofMG} and that $M \cong \Fr_G(M)/\OG(k)$ and the fact that $\Fr_G(M)$ admits the structure of a $(\OG(k)\times G)$-CW complex by \cite[Cor. 7.2]{ill83}. The statement comes down to \Cref{Lhom} with $A=\Fr_G(M)$ and $H=\OG(k)$. 
\end{proof}
\subsection{Orbifolds as \texorpdfstring{$\Orb$}{Orb}-spaces} \label{grouporborb}
%In \Cref{deforbifold} we defined orbifolds as Lie groupoids. The functor from \Cref{Rfunctor} then gives rise to a functor from orbifolds to $\Orb$-spaces.
The functor $F \colon \TopGrpd \to \Top_{\Orb}$  from \Cref{Rfunctor} gives rise to a functor from orbifolds to $\Orb$-spaces. 
\begin{definition}[Functor $\overline F$] \label{lielifunctor}
	Let \[\overline F \colon \Orbfld \longrightarrow \Top_{\Orb} \] be the composition of the full embedding $\Orbfld \hookrightarrow \LieGrpd$, the forgetful functor $\LieGrpd \to \TopGrpd$ and the functor $F \colon \TopGrpd \to \Top_{\Orb}$.
\end{definition}
%Let $\G$ be an orbifold and let $\ast$ denote the trivial subgroup of $\LI$. Recall that we defined $\Top_{\Orb}$ to obtain an analog of Elmendorf's theorem in global homotopy theory. The space \[(\overline F(\G))(\ast)=|\gmap(\B \ast,\G)| \cong |\G|\] is supposed to model the $\ast$-fixed points of $\overline F(\G)$, i.e. the underlying non-equivariant homotopy type of $\overline F(\G)$. This almost fits together with \Cref{homotopygrouporbfld} of non-equivariant homotopy groups of orbifolds. We are just using the geometric realization instead of the fat geometric realization. 
%The categories $\Top_{\Orb}$ and $\Orbfld$ are categories with weak equivalences. 
We check in \Cref{overlineF} that $\overline F$ sends essential equivalences of orbifolds to weak equivalences of $\Orb$-spaces.
This is proven in several steps. In the upcoming \Cref{mapee}, it is shown that \[\|\gmap(\B K,-)\| \colon \TopGrpd \to \Top\] sends essential equivalences to weak equivalences whenever $K$ is topological group. Afterwards, we use Segal's criteria from \cite[Prop. A.1(iv)]{seg74} to show that this fat geometric realization is weakly equivalent to the geometric realization whenever we started with a Lie groupoid. The technical condition will be checked in \Cref{isgood}. 
\begin{proposition} \label{mapee}
	Let $K$ be a topological group. Let $f \colon \G \to \HB$ be an essential equivalence of topological groupoids. Then the induced map of groupoids \[f_\ast \colon \gmap(\B K, \G)\longrightarrow \gmap(\B K ,\HB)\] is again an essential equivalence.
\end{proposition}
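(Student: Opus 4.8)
The plan is to unwind the definition of $\gmap(\B K,-)$ and verify the two defining conditions of an essential equivalence directly, exploiting that $f$ is already assumed to be one. Recall that an object of $\gmap(\B K,\G)$ is a morphism of topological groupoids $\B K \to \G$; since $\B K$ has a single object $\ast$ with automorphism group $K$, such a morphism is the data of a point $x \in \G_0$ (the image of $\ast$) together with a continuous group homomorphism $K \to \G_x \subseteq \G_1$ compatible with the groupoid structure; equivalently it is a continuous map $\phi\colon K \to \G_1$ with $s\phi$ and $t\phi$ constant at $x$ and $\phi$ multiplicative. A morphism in $\gmap(\B K,\G)_1$ (a $2$-morphism, i.e. natural transformation) between $\phi$ and $\phi'$ is a single arrow $\gamma\in\G_1$ conjugating one to the other. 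I would record these descriptions first, together with the topologies from \Cref{deftopgrpd}, so that the subsequent diagrams are literally pullbacks of mapping spaces.

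First I would check fully faithfulness. One must show that
\[
\begin{tikzcd}[sep=large]
\gmap(\B K,\G)_1 \arrow{r}{(f_\ast)_1} \arrow{d}[left]{(s,t)} & \gmap(\B K,\HB)_1 \arrow{d}{(s,t)} \\
\gmap(\B K,\G)_0 \times \gmap(\B K,\G)_0 \arrow{r}{((f_\ast)_0,(f_\ast)_0)} & \gmap(\B K,\HB)_0 \times \gmap(\B K,\HB)_0
\end{tikzcd}
\]
is a pullback in $\Top$. This is where the hypothesis enters through the fully faithfulness square for $f$ itself: an element of the pullback is a pair of morphisms $\phi,\phi'\colon \B K\to\G$ together with a natural transformation $\delta$ between $f\phi$ and $f\phi'$, i.e. a map $\ast\mapsto \delta\in\HB_1$ with prescribed source and target in $\G_0$; the fully faithfulness of $f$ produces a unique $\gamma\in\G_1$ with $f_1(\gamma)=\delta$ and matching source/target, and the naturality square for $\delta$ pulls back to the naturality square for $\gamma$ because $f_1$ is (the relevant restriction of) a homeomorphism onto its image by that same pullback property. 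Continuity of the inverse map is inherited from continuity of the inverse $\HB_1\to\G_1\times_{\HB_0\times\HB_0}(\G_0\times\G_0)$ supplied by $f$, after applying $\map(K,-)$ and $\map(\ast,-)$, which preserve pullbacks.

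Next, essential surjectivity. Given a morphism $\psi\colon \B K\to\HB$, determined by $y\in\HB_0$ and $\psi\colon K\to\HB_1$, essential surjectivity of $f$ gives (locally in $y$, hence here genuinely, since $\B K$ is a point) an object $x\in\G_0$, a morphism $\varepsilon\colon f_0(x)\to y$ in $\HB_1$, and then one transports $\psi$ along $\varepsilon$ to land in the image of $f$: because $f$ is fully faithful, conjugating $\psi$ by $\varepsilon$ gives a homomorphism $K\to\HB_{f_0(x)}$ that factors uniquely through $\G_x$, defining an object $\phi$ of $\gmap(\B K,\G)$ with $f_\ast\phi$ naturally isomorphic to $\psi$ via $\varepsilon$. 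To get the stated form of the condition one checks that the comparison map $\gmap(\B K,\HB)_1\times_{s,(f_\ast)_0}\gmap(\B K,\G)_0\to\gmap(\B K,\HB)_0$ admits local sections: a local section of $t\colon\HB_1\times_{s,f_0}\G_0\to\HB_0$ near $y$ (which exists by hypothesis) induces one here by applying $\map(K,-)$, using that taking the fixed/constant-source-and-target mapping spaces is compatible with the section because everything in sight is built by the composition-with-a-section recipe.

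The main obstacle I expect is purely bookkeeping rather than conceptual: one has to be careful that the mapping-groupoid topologies declared in \Cref{deftopgrpd} (subspaces of $\map(\G_1,\HB_1)$ and products thereof) make the evident set-level bijections into homeomorphisms, i.e. that forming $\gmap(\B K,-)$ commutes with the finite limits (pullbacks, products) appearing in \Cref{essequ} and with passing to the subspaces cut out by the source/target constraints. This is a compatibility of exponentials with limits in $\Top$, which holds in the convenient category of compactly generated weak Hausdorff spaces, but it must be stated; once it is granted, the two verifications above are formal consequences of the corresponding properties of $f$. A secondary point worth a sentence is that "essentially surjective" for $\gmap(\B K,-)$ as stated requires the admitting-local-sections clause, not merely surjectivity on isomorphism classes, and this is exactly why we transport the local section of $f$ rather than argue only objectwise.
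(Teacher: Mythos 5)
Your proposal is correct in substance and rests on the same underlying fact as the paper's proof, but it is organized quite differently. The paper never argues on elements in the main proof: it first isolates two pullback lemmas about the projection $p_\G \colon \gmap(\B K,\G) \to \G$, namely that $\gmap(\B K,\G)_0$ is the pullback of $\gmap(\B K,\HB)_0$ and $\G_0$ over $\HB_0$ (\Cref{pb0}) and that $\gmap(\B K,\G)_1$ is the pullback of $\gmap(\B K,\G)_0$ and $\G_1$ over $\G_0$ along the source (and target) maps (\Cref{pbcm}); both conditions of \Cref{essequ} then follow by purely formal pasting of pullback squares in two cube diagrams, with essential surjectivity obtained because surjections admitting local sections are stable under pullback. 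Your element-wise lifting arguments are exactly the content of \Cref{pb0} (the unique lift of an arrow with prescribed source and target is fully faithfulness of $f$), and your transport of the local section is what the pullback-pasting does automatically, so nothing is missing conceptually; the continuity bookkeeping you defer to the exponential law is precisely what the paper's adjunction manipulations with evaluation maps carry out. What the paper's packaging buys is reusability and economy: \Cref{pbcm} is used again later (in \Cref{isgood}, to show the nerve of $\gmap(\B K,\G)$ is good), and the cube arguments dispose of the continuity of all comparison maps at once rather than case by case. What your route buys is transparency about where fully faithfulness of $f$ actually enters — in particular your observation that naturality of the lifted arrow follows from injectivity on arrows with fixed endpoints is a point the formal argument hides. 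If you write this up, the one step to make fully precise is the continuity of the local section: you should exhibit it as the composite of the given base-level section, conjugation in the mapping space $\map(K,\HB_1)$, and the continuous inverse of the fully-faithfulness comparison map, since "applying $\map(K,-)$" does not literally apply to the subspaces cut out by the source/target and multiplicativity constraints.
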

The proof splits up into several lemmas.
Let \[p_\G \colon \gmap(\B K,\G) \longrightarrow \gmap(\B \tg,\G) \cong \G \]
be induced by the unique morphism $\B \tg \to \B K$ where $\tg$ denotes the trivial subgroup of $\LI$. 
\begin{lemma} \label{pb0}
	Let $K$ be a topological group and let $f\colon \G \to \HB$ be an essential equivalence of topological groupoids. Then the diagram
	\[\begin{tikzcd}[column sep=large]
	\gmap(\B K,\G)_0 \arrow{r}{(p_\G)_0} \arrow[swap]{d}{(f_\ast)_0} & \G_0 \arrow{d}{f_0} \\
	\gmap(\B K,\HB)_0 \arrow{r}{(p_\HB)_0} & \HB_0
	\end{tikzcd}\] is a pullback. 
\end{lemma}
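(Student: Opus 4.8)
The plan is to unwind both sides of the square explicitly as spaces of morphisms of groupoids and to use the fully faithfulness part of the essential equivalence $f$. Recall that an object of $\gmap(\B K, \G)$ is a morphism of topological groupoids $\B K \to \G$; since $\B K$ has a single object, such a morphism is determined by an object $x \in \G_0$ (the image of the unique object of $\B K$) together with a continuous map $K \to \G_1$, $k \mapsto \psi(k)$, satisfying $s(\psi(k)) = t(\psi(k)) = x$, $\psi(1) = e(x)$ and $\psi(k k') = \psi(k) \circ \psi(k')$; in other words, a continuous homomorphism from $K$ into the stabilizer group $\G_x$. Thus $\gmap(\B K, \G)_0$ is the space of pairs $(x, \psi)$ with $x \in \G_0$ and $\psi \colon K \to \G_x$ a continuous homomorphism, and $(p_\G)_0$ sends $(x,\psi)$ to $x$. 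The same description applies to $\HB$, and $(f_\ast)_0$ sends $(x,\psi)$ to $(f_0(x), f_1 \circ \psi)$.

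With these descriptions, a point of the pullback $\gmap(\B K, \HB)_0 \times_{\HB_0} \G_0$ is a triple $\bigl( (y, \chi), x \bigr)$ with $\chi \colon K \to \HB_y$ a continuous homomorphism and $f_0(x) = y$. The comparison map from $\gmap(\B K, \G)_0$ sends $(x, \psi)$ to $\bigl( (f_0(x), f_1\circ\psi), x \bigr)$. To see this is a homeomorphism, I would use the fully faithfulness square
\[\begin{tikzcd}
\G_1 \arrow{r}{f_1} \arrow[swap]{d}{(s,t)} & \HB_1 \arrow{d}{(s,t)} \\
\G_0 \times \G_0 \arrow{r}{(f_0,f_0)} & \HB_0 \times \HB_0
\end{tikzcd}\]
being a pullback. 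Given $(y,\chi)$ and $x$ with $f_0(x) = y$, the composite $\chi \colon K \to \HB_1$ has constant source and target equal to $y = f_0(x)$, so it factors through the pullback, yielding a unique continuous map $\tilde\chi \colon K \to \G_1$ with $f_1 \tilde\chi = \chi$ and $(s,t)\tilde\chi = (x,x)$; that is, $\tilde\chi$ lands in $\G_x$. One then checks $\tilde\chi$ is a homomorphism: the two maps $K \times K \to \G_1$ given by $\tilde\chi(kk')$ and $\tilde\chi(k)\circ\tilde\chi(k')$ both map to $\chi(kk') = \chi(k)\circ\chi(k')$ under $f_1$ and have the same (constant) source and target, so they agree by the uniqueness in the pullback; likewise $\tilde\chi(1) = e(x)$. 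This produces a two-sided continuous inverse $\bigl( (y,\chi), x\bigr) \mapsto (x, \tilde\chi)$ to the comparison map.

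For the topological (rather than merely set-theoretic) statement, I need the bijection and its inverse to be continuous; continuity of the comparison map is clear from functoriality of $f_\ast$ and the definition of $p_\G$, while continuity of the inverse follows from the universal property of the pullback $\G_1 \times_{\HB_1 \text{-via-}(s,t)} (\G_0 \times \G_0)$ in $\Top$ together with the fact that the adjoint/exponential constructions defining the subspace topology on $\gmap(\B K,\G)_0 \subseteq \map(K, \G_1)$ (via $\B K_1 = K$) are compatible with it. The main obstacle I anticipate is bookkeeping the topologies correctly: one must verify that the subspace topology on $\gmap(\B K, \G)_0$ inherited from $\map(\B K_1, \G_1) = \map(K, \G_1)$ matches the topology coming from the pullback description, which amounts to knowing that the pullback in $\Top$ of the fully faithfulness square is computed as a subspace and that mapping-space functors out of a fixed space preserve such pullbacks (true in $\Top$, the category of compactly generated weak Hausdorff spaces). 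The purely categorical content — the bijection — is routine once the explicit description of objects of $\gmap(\B K, -)$ is in hand; everything hinges on the pullback property of fully faithfulness.
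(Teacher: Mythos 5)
Your proposal is correct and follows essentially the same route as the paper: both arguments construct the inverse to the comparison map by lifting the $K$-parametrized family of arrows through the fully-faithfulness pullback $\G_1 \cong \HB_1 \times_{\HB_0 \times \HB_0} (\G_0 \times \G_0)$, and both settle the topology via the exponential adjunction for mapping spaces in compactly generated weak Hausdorff spaces. The paper phrases the lift slightly differently (forming $\gmap(\B K,\HB)_0 \times_{\HB_0} \G_0 \times K \to \G_1$ and then taking its adjoint into $\map(K,\G_1)$), but this is only a notational variant of your element-level construction of $\tilde\chi$.
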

\begin{proof}
	Recall that $\gmap(\B K, \HB)_0$ is topologized as a subset of $\map(K,\HB_1)$. Evaluation at $K$ hence induces a map $\gmap(\B K,\HB)_0 \times K \to \HB_1$. One can check on elements that the following diagram commutes: \[\begin{tikzcd}[sep=large]
	\gmap(\B K,\HB)_0 \times K \ar[r,"(p_\G)_0 \circ \pr_1"] \ar[d] & \HB_0 \ar[d,"\Delta_{\HB_0}"] & \G_0 \ar[d,"\Delta_{\G_0}"] \ar[l,swap,"f_0"] \\
	\HB_1 \ar[r,"{(s_{\HB},t_{\HB})}"] & \HB_0 \times \HB_0 & \G_0 \times \G_0 \ar[l,swap,"{(f_0,f_0)}"]
	\end{tikzcd} \]
	Using that \[\left(\gmap(\B K,\HB)_0 \times K \right) \times_{\HB_0} \G_0 \cong \left(\gmap(\B K,\HB)_0 \times_{\HB_0} \G_0 \right) \times K \,, \] we obtain an induced map on pullbacks \[\left(\gmap(\B K,\HB)_0 \times_{\HB_0} \G_0 \right) \times K \longrightarrow \HB_1 \times_{\HB_0 \times \HB_0} \left(\G_0 \times \G_0 \right) \cong \G_1 \] where the last isomorphism uses that $f$ is an essential equivalence. One can now check on elements that the adjoint map \[\gmap(\B K,\HB)_0 \times_{\HB_0} \G_0  \longrightarrow \map(K,\G_1) \] takes values in $\gmap(\B K,\G)_0$ and that this map is inverse to the map \[\gmap(\B K,\G)_0 \longrightarrow \gmap(\B K,\HB)_0 \times_{\HB_0} \G_0 \] induced by $(f_\ast)_0$ and $(p_\G)_0$.
\end{proof}
We write $s_{\B K,\G}$ and $t_{\B K,\G}$ for the source and target map of the topological groupoid $\gmap(\B K,\G)$.
\begin{lemma} \label{pbcm}
	Let $K$ be a topological group and let $\G$ be a topological groupoid. The diagram \[\begin{tikzcd}[sep = large]
	\gmap(\B K,\G)_1 \arrow{r}{s_{\B K,\G}} \arrow{d}[left]{(p_\G)_1} & \gmap(\B K,\G)_0 \arrow{d}{(p_\G)_0} \\
	\G_1 \arrow{r}{s_\G} & \G_0
	\end{tikzcd} \] is a pullback. 
\end{lemma}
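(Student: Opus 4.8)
The plan is to unwind the definitions and exhibit an explicit inverse to the comparison map into the pullback; note that, unlike in \Cref{pb0}, no condition on $\G$ is needed here, so there is no essential equivalence to exploit. Recall that an object of $\gmap(\B K,\G)$ is a morphism $\B K\to\G$ of topological groupoids; since $\B K$ has a single object $*$ with automorphism group $K$, such a morphism is a point $x\in\G_0$ together with a continuous map $\varphi\colon K\to\G_1$ taking values in the stabilizer $\G_x$ and satisfying $\varphi(1)=e(x)$ and $\varphi(k_1k_2)=\varphi(k_1)\circ\varphi(k_2)$; the point $x$ is recovered from $\varphi$, and $\gmap(\B K,\G)_0$ carries the subspace topology from $\map(K,\G_1)$. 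A $1$-morphism of $\gmap(\B K,\G)$ is a triple $(\varphi,\psi,a)$ consisting of its source object $\varphi$, its target object $\psi$, and the underlying natural transformation, which, since $\B K$ has a single object, is a single arrow $a\in\G_1$ with $s_\G(a)$ the object of $\varphi$ and $t_\G(a)$ the object of $\psi$, subject to the naturality relation $\psi(k)\circ a=a\circ\varphi(k)$ for all $k\in K$. Under these identifications $s_{\B K,\G}$ sends $(\varphi,\psi,a)$ to $\varphi$, while $(p_\G)_1$ sends it to $a$ and $(p_\G)_0$ sends $\varphi$ to its object.

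Next I would identify the pullback $P=\G_1\times_{s_\G,(p_\G)_0}\gmap(\B K,\G)_0$ with the space of pairs $(a,\varphi)$ for which $s_\G(a)$ equals the object of $\varphi$, and observe that the canonical map $\gmap(\B K,\G)_1\to P$, $(\varphi,\psi,a)\mapsto(a,\varphi)$, is continuous, being assembled from projections. The key point is to produce its inverse. Given $(a,\varphi)\in P$, the naturality relation forces $\psi(k)=a\circ\varphi(k)\circ a^{-1}$, so I would \emph{define} $\psi$ by this formula and check that it is again a morphism $\B K\to\G$: functoriality ($\psi(1)=e(t_\G(a))$ and $\psi(k_1k_2)=\psi(k_1)\circ\psi(k_2)$) is immediate from the corresponding properties of $\varphi$ and the groupoid axioms, and continuity of $\psi\colon K\to\G_1$ follows from continuity of the inversion $i$ and the composition $\circ$ of $\G$. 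Then $(\varphi,\psi,a)$ satisfies the naturality relation by construction, the two composites with the comparison map are visibly the identity, and uniqueness of $\psi$ is precisely the observation that the naturality relation determines it. This establishes the bijection on underlying sets.

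Finally I would upgrade this bijection to a homeomorphism by verifying that the inverse $(a,\varphi)\mapsto(\varphi,\psi,a)$ is continuous. Its three components are $(a,\varphi)\mapsto\varphi$ and $(a,\varphi)\mapsto a$, which are projections, and $(a,\varphi)\mapsto\psi$; for the last, viewing $\psi$ as an element of $\map(K,\G_1)$, continuity follows from the exponential law once one knows that the adjoint map $P\times K\to\G_1$, $\bigl((a,\varphi),k\bigr)\mapsto a\circ\varphi(k)\circ a^{-1}$, is continuous, and this map is a composite of the evaluation $\gmap(\B K,\G)_0\times K\to\G_1$, the inversion $i\colon\G_1\to\G_1$, and the composition map of $\G$. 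As in the proof of \Cref{pb0}, these adjunction manipulations are legitimate because we work throughout with compactly generated weak Hausdorff spaces, for which $\map(K,-)$ is right adjoint to $-\times K$. I do not expect any genuine mathematical obstacle here: the only step requiring care is this bookkeeping with the function-space topology together with keeping the composition conventions of $\G$ straight. (Conceptually, the statement amounts to the fact that $\B\tg\times\{0\}\hookrightarrow\B\tg\times I$ and $\B K\times\{0\}\hookrightarrow\B K\times I$ fit into a pushout square of groupoids that $\Mor(-,\G)$ turns into the displayed pullback, but the direct argument above is more transparent and avoids having to justify that preservation.)
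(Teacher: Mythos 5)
Your proposal is correct and follows essentially the same route as the paper: both identify the pullback with pairs $(a,\varphi)$, construct the inverse by the conjugation formula $\psi(k)=a\circ\varphi(k)\circ a^{-1}$ (the paper packages this as evaluation, diagonal, the shuffle $(g_1,(g_2,g_3))\mapsto(g_3,g_1,g_2^{-1})$ and composition), and verify continuity via the exponential law for compactly generated spaces. The only difference is presentational: you make the naturality relation and the uniqueness of $\psi$ explicit where the paper leaves the element-level check to the reader.
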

\begin{proof}
	Similarly as in the proof of \Cref{pb0}, evaluation at an element of $K$ and diagonal maps induce a map \[\gmap(\B K,\G)_0 \times_{\G_0} \G_1 \times K \longrightarrow \G_1 \times_{\G_0 \times \G_0} \left(\G_1 \times \G_1\right) \,. \] There is a homeomorphism \[\G_1 \times_{\G_0 \times \G_0} (\G_1 \times \G_1) \cong \G_1 \times_{\G_0} \G_1 \times_{\G_0} \G_1 \] sending $(g_1,(g_2,g_3))$ to $(g_3,g_1,g_2^{-1})$. Composition in $\G$ hence induces a map \[\gmap(\B K,\G)_0 \times_{\G_0}\G_1 \times K \longrightarrow \G_1 \,.\] Consider the map \[\gmap(\B K,\G)_0 \times_{\G_0}\G_1 \longrightarrow \map(K,\G_1) \times \map(K,\G_1) \times \map(\ast, \G_1) \] which is induced by the inclusion $\gmap(\B K,\G)_0 \hookrightarrow \map(K,\G_1)$, the adjoint of the above map and the homeomorphism $\G_1 \cong \map(\ast,\G_1)$. One can check on elements that this map takes values in $\gmap(\B K,\G)_1$ and that it is inverse to the map from $\gmap(\B K,\G)_1$ to the pullback $\gmap(\B K,\G)_0 \times_{\G_0} \G_1$ induced by $s_{\B K,\G}$ and $(p_\G)_1$.
\end{proof}
\begin{remark} \label{pbcmt}
	Similarly one shows that the diagram in \Cref{pbcm} is a pullback when replacing the source maps with target maps.
\end{remark}
\begin{proof}[Proof of \Cref{mapee}]
	We first prove that $f_\ast$ is essentially surjective as in \Cref{essequ} of essential equivalences. Let \[P=\gmap(\B K,\HB)_1 \times_{{\gmap(\B K,\HB)}_0} \gmap(\B K,\G)_0\] denote the pullback along $s_{\B K,\HB}$ and $(f_\ast)_0$. Consider the commutative diagram \[\begin{tikzcd}[sep = large]
	& \gmap(\B K,\G)_0 \ar[rr,"(p_\G)_0"] \ar[dd,near end,swap,"(f_\ast)_0"] & & \G_0 \ar[dd,"f_0"] \\
	P \ar[ur] \ar[rr,crossing over,"\varphi" near end] \ar[dd] & & \HB_1 \times_{\HB_0} \G_0 \ar[ur] \\
	& \gmap(\B K,\HB)_0 \ar[rr,"(p_\HB)_0" near start] & & \HB_0 \\
	\gmap(\B K,\HB)_1 \ar[rr,swap,"(p_{\HB})_1"] \ar[ur,"s_{\B K,\HB}"] & & \HB_1 \ar[ur,"s_\HB" swap] \ar[from=uu,crossing over]
	\end{tikzcd} \]
	where the unlabeled maps out of the pullbacks are the structure maps and $\varphi$ is the map induced by the functoriality of pullbacks. 
	
	The squares on the left and on the right are pullbacks by definition. The square in the back also is a pullback by \Cref{pb0}. Hence, the square in the front is a pullback.
	Consider the following commutative diagram: \[\begin{tikzcd}[sep = large]
	P \ar[d,"\varphi"] \ar[r] & \gmap(\B K,\HB)_1 \ar[d,"(p_\HB)_1"] \ar[r,"t_{\B K,\HB}"] & \gmap(\B K,\HB)_0\ar[d,"(p_\HB)_0"]  \\
	\HB_1 \times_{\HB_0} \G_0 \ar[r] & \HB_1 \ar[r,"t_\HB"] & \HB_0
	\end{tikzcd} \]
	We have just shown that the left square is a pullback and the right square is a pullback by \Cref{pbcmt}. Therefore, the outer square also is a pullback. The composition of the lower maps is a surjection admitting local sections by assumption on $f$. Now we can conclude that the composition of the maps in the top row also is a surjection that admits local sections.
	
	To see that $f_\ast$ is fully faithful as in \Cref{essequ}, consider the following commutative diagram:
	\[ \begin{tikzcd}[column sep = small,row sep = large]
	& \gmap(\B K,\G)_0 \ar[rr,"(f_\ast)_0"] \ar[dd,"(p_\G)_0" near end, swap] & & \gmap(\B K,\HB)_0 \ar[dd,"(p_\HB)_0"] \\
	\gmap(\B K,\G)_1 \ar[ur,"s_{\B K,\G}"] \ar[rr,"(f_{\ast})_1" near end, crossing over] \ar[dd,swap,"(p_\G)_1"] & & \gmap(\B K,\HB)_1 \ar[ur,"s_{\B K,\HB}"]\\
	& \G_0 \ar[rr,"f_0" near end] & & \HB_0 \\
	\G_1 \ar[ur,"s_\G"] \ar[rr,"f_1"] & & \HB_1 \ar[ur,"s_\HB" swap] \ar[from=uu,"(p_{\HB})_1" near end, swap, crossing over] 
	\end{tikzcd}\] 
	The square in the back is a pullback by \Cref{pb0}. The squares on the left and on the right are pullbacks by \Cref{pbcm}. Therefore, the square in the front is also a pullback. 
	Consider the following commutative diagram: \[\begin{tikzcd}[row sep=large]
	& \G_1 \ar[rr,"f_1"] \ar[dd,"{(s_\G,t_\G)}" near end, swap] & & \HB_1 \ar[dd,"{(s_\HB,t_\HB)}"] \\
	\gmap(\B K,\G)_1 \ar[dd,"{(s_{\B K,\G},t_{\B K,\G})}" swap] \ar[rr,"{(f_\ast)_1}" near end, crossing over] \ar[ur,"(p_\G)_1"] & & \gmap(\B K,\HB)_1  \ar[ur,swap,"(p_\HB)_1"] \\
	& (\G_0)^2 \ar[rr,"{(f_0)^2}" near end] & & (\HB_0)^2 \\
	\left(\gmap(\B K,\G)_0 \right)^2  \ar[ur,"{((p_\G)_0)^2}"] \ar[rr,swap,"{((f_\ast)_0)^2}"] & & \left(\gmap(\B K,\HB)_0\right)^2 \ar[ur,"{((p_\HB)_0)^2}" swap] \ar[from=uu,"{(s_{\B K,\HB},t_{\B K,\HB})}", near end, swap, crossing over]
	\end{tikzcd}\]
	The square in the back is a pullback by assumption on $f$. The lower square is a pullback by \Cref{pb0}. We have just shown that the square on the top is also a pullback. All in all, we can conclude that the square in the front also is a pullback. 
\end{proof}
\Cref{mapee} and \Cref{fgree} imply that the induced map \[\|\gmap(\B K,\G) \| \longrightarrow \|\gmap(\B K,\HB)\|\] is a weak equivalence for every essential equivalence $f \colon \G \to \HB$. We want to conclude the same for the geometric realization instead of the fat geometric realization. We prove this for the case where $\G$ and $\HB$ are Lie groupoids. 
\begin{lemma} \label{pbiscc}
	Let $M$ and $N$ be manifolds and let $A$ be a space. Let $a \colon A \to N$ be a continuous map and let $r \colon M \to N$ be a smooth map with a smooth section $c \colon N \to M$. The section~$c^\prime \colon A \to A \times_N M$ induced from the identity $A \to A$ and the composition~$c \circ a \colon A \to M$ is a Hurewicz cofibration. 
	\[ \begin{tikzcd}[sep = large]
	A \times_N M \ar[r]\ar[d]& M \ar[d,swap,"r"] \\
	A \ar[r,"a"] \ar[u,bend right=50,swap,dashed,"c^\prime"] & N \ar[u,bend right=50,swap,"c"] 
	\end{tikzcd}\]
\end{lemma}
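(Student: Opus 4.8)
The plan is to show that $c'$ is a closed Hurewicz cofibration by producing a neighbourhood deformation retraction of the total space $A\times_N M$ onto the image of $c'$; the essential point is that on $M$ this structure can be chosen \emph{vertically}, i.e. compatibly with $r$, and verticality is exactly what lets it be pulled back along $a$. First I would record the differential topology behind the section $c$. Since $r\circ c=\id_N$, the map $c$ is an injective immersion and a homeomorphism onto its image (with continuous inverse $r|_{c(N)}$), so $Z:=c(N)$ is a smooth submanifold of $M$; it is closed, being the equaliser $\{m\in M\mid c(r(m))=m\}$ of $c\circ r$ and $\id_M$ in the Hausdorff space $M$. Differentiating $r\circ c=\id_N$ shows $\ddf r_z$ is surjective for every $z\in Z$, with right inverse $\ddf c_{r(z)}$; hence $r$ is a submersion on an open neighbourhood of $Z$, one has $\T_zM=\T_zZ\oplus\ker(\ddf r_z)$ for $z\in Z$, and $V:=\ker(\ddf r)|_Z$ is a vector subbundle of $\T M|_Z$ complementary to $\T Z$.

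The key step is to construct a vertical tubular neighbourhood of $Z$: an open set $W\supseteq Z$ in $M$ together with a homotopy $h\colon W\times[0,1]\to W$ such that $h_0=\id_W$, $h_1(W)\subseteq Z$, $h_t|_Z=\id_Z$, and $r\circ h_t=r|_W$ for all $t\in[0,1]$. I would fix a Riemannian metric on $M$. For $z\in Z$ the fibre $r^{-1}(r(z))$ is, near $z$, an embedded submanifold $F_z$ with $\T_zF_z=\ker(\ddf r_z)=V_z$; set $E(z,v):=\exp^{F_z}_z(v)$, the exponential map of the induced metric on $F_z$, defined for small $v\in V_z$. Then $(z,v)\mapsto E(z,v)$ is smooth, $E(z,0)=z$, and $r(E(z,v))=r(z)$ because $E(z,v)$ remains in $F_z\subseteq r^{-1}(r(z))$; moreover $\ddf E_{(z,0)}$ is the identity under the identification $\T_zZ\oplus V_z\cong\T_zM$, so the usual tubular-neighbourhood argument makes $E$ a diffeomorphism from a fibrewise star-shaped neighbourhood of the zero section of $V\to Z$ onto an open neighbourhood $W$ of $Z$. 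Transporting the fibrewise scaling through $E$, namely $h(E(z,v),t):=E(z,(1-t)v)$, produces $h$, and all four required properties — in particular $r\circ h_t=r$ — are immediate from $E(z,v)\in F_z$. Finally I would choose a continuous $u\colon M\to[0,1]$ with $u^{-1}(0)=Z$ and $u^{-1}([0,1))\subseteq W$; such a $u$ exists since $M$ is metrisable and $Z$ is closed.

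Now I would pull everything back. Let $\pr_A,\pr_M$ denote the projections out of $A\times_N M$ and put $Z':=\pr_M^{-1}(Z)=(A\times_N M)\cap(A\times Z)$, a closed subset. The map $c'$ takes values in $Z'$, is onto $Z'$, and is a homeomorphism onto $Z'$ with inverse $\pr_A|_{Z'}$, so it is a closed embedding; it therefore suffices to find a neighbourhood deformation retraction of $A\times_N M$ onto $Z'$. Set $W':=\pr_M^{-1}(W)$, an open neighbourhood of $Z'$, and define $h'\colon W'\times[0,1]\to W'$ by $h'((x,m),t):=(x,h(m,t))$; this is well defined into $A\times_N M$ precisely because $r(h(m,t))=r(m)=a(x)$ — this is where the verticality of $h$ is used — and into $W'$ because $h(m,t)\in W$, and one checks at once that $h'_0=\id_{W'}$, that $h'_1(W')\subseteq Z'$ since $h_1(m)\in Z$, and that $h'_t|_{Z'}=\id_{Z'}$. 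With $u':=u\circ\pr_M$ one gets $u'^{-1}(0)=Z'$ and $u'^{-1}([0,1))\subseteq W'$. Thus $(A\times_N M,Z')$ is a closed NDR pair (the deformation retraction of $W'$ being damped by $u'$ and extended by the identity off $W'$), so the inclusion $Z'\hookrightarrow A\times_N M$, and hence $c'$, is a closed Hurewicz cofibration. I expect the only genuine work to lie in the second step: verifying that the fibrewise exponential is jointly smooth in $(z,v)$ and restricts to a diffeomorphism near the zero section while remaining constant along the fibres of $r$. Granting that geometric input, the pullback is formal.
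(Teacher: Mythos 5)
Your proof is correct and follows essentially the same route as the paper: exhibit a tubular neighbourhood of $c(N)$ in $M$, pull it back to $A\times_N M$, and verify a halo/NDR criterion for Hurewicz cofibrations. The one place where you do genuinely more than the paper is the construction of a \emph{vertical} tubular neighbourhood via the fibrewise exponential along the fibres of $r$: the paper simply asserts that the restriction of $A\times_N M\to A$ to the preimage of the tubular neighbourhood is a pullback of a vector bundle, and that assertion is exactly the statement that the tubular projection can be taken to agree with $r$ near $c(N)$ — which is what your condition $r\circ h_t=r$ supplies, and which is also what makes the scaling deformation descend to the fibre product. So your argument fills in the key geometric detail the paper leaves implicit; the remaining damping-by-$u'$ manipulations at the end are standard and are treated at the same level of detail in both proofs.
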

\begin{proof}
	We use the following characterization of Hurewicz cofibrations from \cite[Thm. 1.7]{de72}: A subspace inclusion $A \hookrightarrow A \times_N M$ is a Hurewicz cofibration if it is a strong deformation retract of a halo $U$ in $A \times_N M$, that is a subset $U=f^{-1}([0,1))$ for a map $f \colon A \times_N M \to [0,1]$ such that $f^{-1}(0)=A$.
	
	The map $c \colon N \to M$ is a smooth section and hence both a closed embedding and an immersion. Choose a tubular neighborhood $U$ of $N$ in $M$. The subset $U$ is a halo of $N$ as can be seen by choosing a metric on the manifold $M$. %\todo{details?}
	\begin{comment}, that is a vector bundle $\xi \colon E \to N$ together with a homeomorphism onto an open image $j \colon E \to M$ such that the composition of the $0$-section $N \to E$ and $j \colon E \to M$ is the inclusion $c \colon N \to M$.
	The subset $j(E)$ is a halo of $N$ in $M$. This can be seen by choosing the vector bundle $\xi$ to be a euclidean one. Define \begin{align*}
	f \colon M & \longrightarrow [0,1] \,, \\
	x & \longmapsto \begin{cases*}
	1-\exp(|j^{-1}(x)|) & if  $x \in j(X)$, \\
	1 & otherwise
	\end{cases*} \end{align*}
	where $|j^{-1}(x)|$ is the norm of $j^{-1}(x)$ in the euclidean vector space of the fiber of $\xi$ around~$j^{-1}(x)$. Then $f^{-1}(0)=N$ and $f^{-1}([0,1))=j(E)$.
	\end{comment}
	Let $\tilde U$ be the preimage of $U$ in $A \times_N M$ under the structure map $A \times_N M \to M$. Then $\tilde U$ is a halo of $A$ in $A \times_N M$. %via the map $f \circ \pr_2 \colon A \times_N M \to [0,1]$.
	
	The restriction of the structure map $ A \times_N M \to A$ to $\tilde U$ is a pullback of a vector bundle and hence again a vector bundle. The map $c^\prime \colon A \to A \times_N M$ takes values in $\tilde U$ and is the $0$-section of the pullback vector bundle and hence a strong deformation retract of the halo $\tilde U$. %All in all, $c^\prime \colon A \to A \times_N M$ is a closed cofibration.
\end{proof}
\begin{comment}
\begin{remark}
We can argue much easier if $r \colon M \to N$ is a fibration. This is for example the case if $r$ is proper. Consider the commutative diagram \[\begin{tikzcd}[sep = large]
N \ar[d,swap,"c"] \ar[r,equal] & N \ar[d,equal] & A \ar[l,swap,"a"] \ar[d,equal] \\
M \ar[r,"r"] & N & A \ar[l,swap,"a"]
\end{tikzcd} \]
where the vertical arrows are closed cofibrations and the horizontal arrows on the left are fibrations. Then the induced map of pullbacks \[A \times_N N \cong A \longrightarrow A \times_N M\] is a closed cofibration by \cite[Thm. 1]{kie87}.
\end{remark}
\end{comment}
\begin{lemma} \label{isgood}
	Let $\G$ be a Lie groupoid and let $K$ be a Lie group. The nerve $\nerve(\gmap(\B K,\G))$ is a good simplicial space, %as defined in \cite[Def. A.4]{seg74}, 
	i.e. all degeneracy maps are Hurewicz cofibrations. 
\end{lemma}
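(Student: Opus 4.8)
The plan is to identify the nerve of $\gmap(\B K,\G)$ as a base change of the nerve of $\G$ along $(p_\G)_0\colon \gmap(\B K,\G)_0 \to \G_0$, and then feed the result into \Cref{pbiscc}, which is exactly the device that converts the smoothness of $\G$ into a Hurewicz cofibration.

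First I would establish, for every $n\ge 0$, a homeomorphism
\[\gmap(\B K,\G)_n \;\cong\; \gmap(\B K,\G)_0 \times_{(p_\G)_0,\,\G_0,\,\sigma_n} \G_n\]
that is compatible with the simplicial structure maps, where $\G_n$ is the $n$-th space of $\nerve(\G)$ and $\sigma_n\colon \G_n\to\G_0$ picks out the source object of a string of $n$ composable arrows. The case $n=1$ is \Cref{pbcm}; the general case follows by induction, splitting a string of $n+1$ composable natural transformations into its first arrow followed by a string of length $n$ and combining \Cref{pbcm} with the inductive hypothesis (conceptually: a functor $\B K\to\G$ is a point $x\in\G_0$ together with a continuous homomorphism $K\to\G_x$, and a natural transformation is determined by its source functor together with an arrow of $\G$, so a string of $n$ composable natural transformations is the same as a string of $n$ composable arrows of $\G$ together with a functor $\B K\to\G$ over its source object). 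Under this identification $\nerve(p_\G)$ becomes the projection onto $\G_\bullet$, and since inserting an identity arrow never changes the source object, the $i$-th degeneracy $s_i\colon\gmap(\B K,\G)_n\to\gmap(\B K,\G)_{n+1}$ corresponds to $\id_{\gmap(\B K,\G)_0}\times_{\G_0} s_i^{\G}$, where $s_i^{\G}$ is the $i$-th degeneracy of $\nerve(\G)$.

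Next I would use the simplicial identities: $s_i^{\G}\colon\G_n\to\G_{n+1}$ is a section of the face maps $d_i^{\G}$ and $d_{i+1}^{\G}$, and at least one of these — call it $r_i^{\G}$, taking $r_0^{\G}=d_1^{\G}$ and $r_i^{\G}=d_i^{\G}$ for $i\ge 1$ — fixes the source object and hence commutes with the maps $\sigma_n,\sigma_{n+1}$. Since $\G$ is a Lie groupoid, $\G_n$ and $\G_{n+1}$ are manifolds and the maps $r_i^{\G},s_i^{\G}$ are smooth. Using that $r_i^{\G}$ commutes with the source-object maps, a short diagram chase with fibre products shows that the base change of $r_i^{\G}\colon\G_{n+1}\to\G_n$ along $(p_\G)_n\colon\gmap(\B K,\G)_n\to\G_n$ is $\gmap(\B K,\G)_{n+1}$, with the canonical section equal to $s_i$. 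Applying \Cref{pbiscc} with $M=\G_{n+1}$, $N=\G_n$, $r=r_i^{\G}$, $c=s_i^{\G}$, $A=\gmap(\B K,\G)_n$ and $a=(p_\G)_n$ then yields precisely that $s_i\colon\gmap(\B K,\G)_n\to\gmap(\B K,\G)_{n+1}$ is a Hurewicz cofibration. As $n\ge 0$ and $0\le i\le n$ were arbitrary, every degeneracy of $\nerve(\gmap(\B K,\G))$ is a Hurewicz cofibration, i.e.\ the simplicial space is good.

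The main obstacle is the first step: setting up the identification of $\gmap(\B K,\G)_n$ with a fibre product over $\G_0$ so that it is genuinely natural in the simplicial direction (so that it intertwines the degeneracies with those of $\nerve(\G)$) and keeping careful track of which iterated source/target maps are involved and which face maps preserve them. Once that bookkeeping is done, \Cref{pbiscc} carries all of the topological content.
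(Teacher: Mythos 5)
Your proposal is correct and follows essentially the same route as the paper: both arguments reduce to \Cref{pbiscc} by exhibiting $\gmap(\B K,\G)_{n+1}$ as the base change of a face map of $\nerve(\G)$ that retracts the given degeneracy and preserves the source object (hence excluding $d_0$), with \Cref{pbcm} as the base case of an induction; the paper's inductive step even passes through the same identification $\gmap(\B K,\G)_n\cong\gmap(\B K,\G)_0\times_{\G_0}\G_n$ that you set up first, just organized as the outer square of a pasted diagram rather than stated separately.
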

\begin{proof}
	We write $M$ for $\nerve(\gmap(\B K,\G))$. We use the indexing for face and degeneracy maps from \cite[Def. 1.1]{may67}. It is enough to prove that for all integers $1 \le i \le n$ the diagram \[ \begin{tikzcd}[sep = large]M_{n} \ar[r,"(p_\G)_n"] \ar[d,swap,"d_i^{M}"] & \G_n \ar[d,"d_i^\G"]  \\
	M_{n-1} \ar[r,"(p_\G)_{n-1}"] & \G_{n-1}\end{tikzcd} \] is a pullback for the face maps $d_i^\G \colon \G_n \to \G_{n-1}$ and $d_i^{M} \colon M_n \to {M}_{n-1}$. The degeneracy map $\sigma_{i-1}^\G \colon \G_{n-1} \to \G_n$ is a section of $d_i^\G$ and induces the degeneracy map~$\sigma_{i-1}^{M} \colon M_{n-1} \to M_n$ which is then a Hurewicz cofibration by \Cref{pbiscc}. 
	
	We prove this by induction on $n$. For $n=1$, we can have $i=1$ only and this is \Cref{pbcm} because $d_1$ is the source map. Let $n \ge 2$ and $1 \le i \le n$. Consider the commutative diagram \begin{equation} \label{properhelp} \begin{tikzcd}[column sep = large, row sep = huge]
	M_n \ar[r,"{d_i^{M}}"] \ar[d,swap,"{(p_\G)_n}"] & M_{n-1} \ar[r,"s \circ \pr_1"] \ar[d,swap,"{(p_\G)_{n-1}}"] & M_0 \ar[d,"{(p_\G)_0}"] \\
	\G_n \ar[r,"{d_i^{\G}}"] & \G_{n-1} \ar[r,"s\circ \pr_1"]  & \G_0 
	\end{tikzcd}\end{equation}
	where $s \circ \pr_1$ is the source of the the first component of $M_{n-1}$ (or $\G_{n-1}$, respectively), that is the map induced by $[0] \mapsto [n-1], 0 \mapsto 0$.
	
	The  square on the right is a pullback by induction (and moreover using that $s \circ \pr_1$ is a composition of face maps). The square on the left is a pullback for $i=n$ because~$d_n$ is the projection onto the first $n-1$ factors and hence the diagram \[\begin{tikzcd}[row sep = large, column sep = huge]
	& M_1 \ar[dd,near start,"{(p_\G)_1}"] \ar[rr,"s"] & & M_0 \ar[dd,"{(p_\G)_0}"] \\
	M_n \ar[rr,near end,swap,crossing over,"d_n^{M}"] \ar[dd,swap,"(p_\G)_n"] \ar[ur," \pr_n"] & & M_{n-1} \ar[ur,"t\circ\pr_{n-1}"] \\
	& \G_1 \ar[rr,near end,"s"] & & \G_0 \\
	\G_n \ar[rr,swap,"d_n^\G"] \ar[ur,"\pr_n"] & & \G_{n-1} \ar[ur,swap,"t \circ \pr_{n-1}"] \ar[from=uu,,near end, swap,crossing over,"(p_\G)_{n-1}"]
	\end{tikzcd} \]
	commutes. The square on the top and on the bottom are pullbacks by definition. The square in the back is a pullback by \Cref{pbcm} and so is the square in the front. 
	
	The outer square in \eqref{properhelp} therefore also is a pullback. But the outer square is independent of the choice of $i$ because $s \circ \pr_1 \circ d_i$ always is the map induced by $[0] \to [n],0 \mapsto 0$ since we excluded the case $i=0$. But then the left square in \eqref{properhelp} is also always a pullback.
\end{proof}
\begin{corollary} \label{overlineF}
	The functor \[\overline F \colon \Orbfld \longrightarrow \Top_{\Orb} \] from \Cref{lielifunctor} sends essential equivalences of orbifolds to weak equivalences of $\Orb$-spaces.
\end{corollary}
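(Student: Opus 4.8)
The plan is to check the defining condition for a weak equivalence of $\Orb$-spaces levelwise at every universal subgroup $K$ of $\LI$. Fix such a $K$; being a compact Lie group, $K$ is in particular both a topological group and a Lie group, so all of the preparatory results apply to it. Let $f \colon \G \to \HB$ be an essential equivalence of orbifolds. Since orbifolds are proper Lie groupoids, $f$ is also an essential equivalence of the underlying topological groupoids by \Cref{lievstop}. Because $\overline F(f)$ is automatically a natural transformation, it then suffices to show that $\overline F(f)(K) \colon |\gmap(\B K,\G)| \to |\gmap(\B K,\HB)|$ is a weak equivalence for each such $K$.

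First I would feed $f$ into \Cref{mapee} to obtain that $f_\ast \colon \gmap(\B K,\G) \to \gmap(\B K,\HB)$ is again an essential equivalence of topological groupoids, and then apply \Cref{fgree} to conclude that its fat geometric realization $\|f_\ast\| \colon \|\gmap(\B K,\G)\| \to \|\gmap(\B K,\HB)\|$ is a weak equivalence. Next I would replace the fat geometric realization by the ordinary one: since $\G$ and $\HB$ are Lie groupoids and $K$ is a Lie group, \Cref{isgood} shows that both $\nerve(\gmap(\B K,\G))$ and $\nerve(\gmap(\B K,\HB))$ are good simplicial spaces, so by Segal's criterion \cite[Prop. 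A.1(iv)]{seg74} the natural quotient maps $\|\gmap(\B K,\G)\| \to |\gmap(\B K,\G)|$ and $\|\gmap(\B K,\HB)\| \to |\gmap(\B K,\HB)|$ are weak equivalences. These quotient maps are natural in the groupoid, so they assemble into a commutative square together with $\|f_\ast\|$ and $|f_\ast|$; three of its four edges are weak equivalences, hence so is $|f_\ast| = \overline F(f)(K)$ by the two-out-of-three property. As $K$ ranged over all universal subgroups, $\overline F(f)$ is a weak equivalence of $\Orb$-spaces.

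In short, the corollary is a purely formal assembly of three facts proved earlier, namely \Cref{mapee}, \Cref{fgree} and \Cref{isgood}, so I do not anticipate a genuine obstacle at this point. The actual work has already been carried out upstream: \Cref{mapee} is the diagram chase through the pullback lemmas \Cref{pb0}, \Cref{pbcm} and \Cref{pbcmt}, and \Cref{isgood} (together with \Cref{pbiscc}) is precisely the step that exploits the Lie, rather than merely topological, structure of the groupoids, which is why the argument is phrased for orbifolds as Lie groupoids here.
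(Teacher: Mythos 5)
Your proof is correct and follows exactly the same route as the paper: pass to topological groupoids via \Cref{lievstop}, apply \Cref{mapee} and \Cref{fgree} to get a weak equivalence of fat realizations, then use \Cref{isgood} with Segal's criterion and two-out-of-three in the naturality square to transfer this to the ordinary geometric realization. Nothing to add.
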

\begin{proof}
	Let $f \colon \G \to \HB$ be an essential equivalence of orbifolds. It also is an essential equivalence of topological groupoids, see \Cref{lievstop}. The induced map \[\gmap(\B K,\G) \to \gmap(\B K,\HB)\] is an essential equivalence for all universal subgroups $K$ of $\LI$ by \Cref{mapee}. By \Cref{fgree}, the induced map \[\| \gmap(\B K,\G)\|\to \|\gmap(\B K,\HB) \|\] is a weak equivalence. Consider the following commutative diagram \[\begin{tikzcd}
	\|\gmap(\B K,\G) \|\ar[r,"\simeq"] \ar[d,"\simeq"] & \| \gmap(\B K,\HB)\| \ar[d,"\simeq"] \\
	\left| \gmap(\B K,\G)\right| \ar[r] & \left|\gmap(\B K,\HB)\right|
	\end{tikzcd}\]
	where the vertical maps are the natural quotient maps and the horizontal maps are induced by $f$. The vertical maps are weak equivalences by \cite[Prop. A.1(iv)]{seg74} which applies because of \Cref{isgood}. We just checked that the upper map also is a weak equivalence. Therefore, the map \[f_\ast \colon |\gmap(\B K,\G)| \to |\gmap(\B K,\HB)|\] is a weak equivalence for all universal subgroups $K$ of $\LI$.
\end{proof}
\subsection{Orbifolds as \texorpdfstring{$\LI$}{L}-spaces} \label{comparison2}
We defined a functor \[\overline F \colon \Orbfld \longrightarrow \Top_{\Orb} \] in \Cref{lielifunctor}.
Moreover, there is a zig-zag of Quillen equivalences \[\begin{tikzcd}
\Top_{\Orb} \ar[r,"(f_1)^\ast",shift right,swap=0.6ex] & \Top_{\Orb^\prime} \ar[l,swap,shift right =0.6ex,"(f_1)_!"] \ar[r,shift left=0.6ex,"(f_2)_!"] & \Top_{\Ogl} \ar[l,shift left=0.6ex,"(f_2)^\ast"] \ar[r,shift left=0.6ex,"\Lambda"] & \LI\dash\Top \ar[l,shift left = 0.6ex,"\Phi"]  
\end{tikzcd} \] reviewed in \Cref{comparison} and \Cref{elmendorfLI}.
Using this we explain how to construct a functor \[\lieli \colon \Orbfld \longrightarrow \LI\dash\Top \] by post-composing $\overline F$ with the Quillen equivalences and a cofibrant replacement. Finally, we prove that the functor $\lieli$ coincides with the assignment $\effli$ from \Cref{functorLI} up to global equivalence of $\LI$-spaces.

Let $\Repl_{\cof}$ be a cofibrant replacement functor in $\Top_{\Orb^\prime}$. Such a functor always exists in the projective model structure on a small topological category, see \cite[Thm. 5.7]{pia91}. 
\begin{definition}[Functor $\lieli$] \label{lieli}
	Let $\lieli$ denote the composition \[\lieli=\Lambda \circ (f_2)_{!} \circ \Repl_{\cof} \circ (f_1)^\ast \circ \overline F\colon \Orbfld \longrightarrow \LI\dash\Top \,.\]
\end{definition}
%We can immediatly state the following.
\begin{corollary} \label{lielihomotopical}
	The functor $\lieli$ sends essential equivalences of orbifolds to global equivalences of $\LI$-spaces.
\end{corollary}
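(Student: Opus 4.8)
The functor $\lieli$ is by definition the five-fold composition $\Lambda \circ (f_2)_! \circ \Repl_{\cof} \circ (f_1)^* \circ \overline F$, and the weak equivalences of the universal projective model structure on $\LI\dash\Top$ are exactly the global equivalences of $\LI$-spaces. So the plan is to check, one functor at a time, that each of the five constituents carries the relevant weak equivalences to weak equivalences, and then to concatenate these statements.

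The first three are immediate. That $\overline F$ sends essential equivalences of orbifolds to weak equivalences of $\Orb$-spaces is precisely \Cref{overlineF}. The functor $(f_1)^*$ is restriction along $f_1 \colon \Orb^\prime \to \Orb$, which is the identity on objects, so that $\bigl((f_1)^*X\bigr)(K)=X(K)$ for every universal subgroup $K$ of $\LI$; hence $(f_1)^*$ preserves levelwise weak equivalences. For the cofibrant replacement functor $\Repl_{\cof}$ one applies two-out-of-three to the naturality square of the natural trivial fibration $\Repl_{\cof}(-)\Rightarrow\id$, which shows that $\Repl_{\cof}$ preserves weak equivalences; and it of course takes values in cofibrant $\Orb^\prime$-spaces.

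It remains to push the resulting weak equivalence through the two Quillen functors $(f_2)_!$ and $\Lambda$, where I would invoke Ken Brown's lemma. The functor $(f_2)_!$ is right Quillen; since every space is Serre-fibrant, every object of $\Top_{\Orb^\prime}$ is fibrant in the projective model structure, so $(f_2)_!$ preserves weak equivalences between fibrant objects, hence all weak equivalences. The functor $\Lambda$ is left Quillen, hence preserves weak equivalences between \emph{cofibrant} $\Ogl$-spaces and carries cofibrant objects to cofibrant $\LI$-spaces; so, provided the $\Ogl$-space fed into $\Lambda$ is cofibrant, $\Lambda$ sends $(f_2)_!\Repl_{\cof}(f_1)^*\overline F(f)$ to a weak equivalence of the universal projective model structure, i.e. a global equivalence. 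Concatenating the five steps then yields the corollary.

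The point that requires care — and the reason the cofibrant replacement $\Repl_{\cof}$ is built into \Cref{lieli} in the first place — is exactly this cofibrancy of the input to $\Lambda$: Ken Brown's lemma applies to $\Lambda$ only on cofibrant $\Ogl$-spaces, so one must know that $(f_2)_!$ carries the cofibrant $\Orb^\prime$-spaces produced by $\Repl_{\cof}$ to cofibrant $\Ogl$-spaces. I would try to establish this by showing that $(f_2)_!$ is itself left Quillen, so that it preserves cofibrant objects; if that fails, I would instead re-derive, checking that $\lieli$ represents the total derived composite $\mathbf{L}\Lambda \circ \mathbf{R}(f_2)_! \circ (f_1)^*$ along the zig-zag of Quillen equivalences from \Cref{comparison} — a composite of derived functors, indeed an equivalence of homotopy categories, which therefore preserves and reflects all weak equivalences. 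That $\lieli$ does represent this derived composite uses that $(f_1)^*$ needs no replacement (it preserves all weak equivalences) and $(f_2)_!$ needs no fibrant replacement (every object of $\Top_{\Orb^\prime}$ is already fibrant), so that the single cofibrant replacement $\Repl_{\cof}$ inserted between $(f_1)^*$ and $(f_2)_!$ is enough. I expect this cofibrancy bookkeeping at the $\Top_{\Ogl}$ stage to be the only non-formal ingredient of the proof.
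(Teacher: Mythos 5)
Your proposal is correct and follows essentially the same route as the paper's proof: $\overline F$ is homotopical by \Cref{overlineF}, $(f_1)^\ast$ is homotopical (the paper justifies this via right‑Quillenness together with the fact that all objects of $\Top_{\Orb}$ are fibrant, rather than by the identity‑on‑objects observation, but both work), and the remaining composite is homotopical on the cofibrant objects supplied by $\Repl_{\cof}$. The one point you flag --- cofibrancy of the $\Ogl$-space fed into $\Lambda$ --- is resolved in the paper exactly along the lines of your first option: it treats $\Lambda \circ (f_2)_{!}$ as a single left Quillen functor $\Top_{\Orb^\prime} \to \LI\dash\Top$, so Ken Brown's lemma is applied to this composite on the cofibrant $\Orb^\prime$-space $\Repl_{\cof}\bigl((f_1)^\ast \overline F(\G)\bigr)$ and the intermediate cofibrancy question never arises.
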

\begin{proof}
	The functor $\overline F \colon \Orbfld \to \Top_{\Orb}$ is homotopical by \Cref{overlineF}. The functor $(f_1)^\ast \colon \Top_{\Orb} \longrightarrow \Top_{\Orb^\prime}$ is homotopical because it is a right Quillen functor and all objects in $\Top_{\Orb}$ are fibrant. The functor $\Lambda \circ (f_2)_{!} \colon \Top_{\Orb^\prime} \longrightarrow \LI\dash\Top$ is left Quillen and hence homotopical on cofibrant objects.
\end{proof}
\begin{proposition} \label{generalHausdorff}
	Let $A$ be a Hausdorff $G$-space for $G$ a universal subgroup of $\LI$. Then there is a zig-zag of global equivalences between \[(\Lambda \circ (f_2)_{!} \circ \Repl_{\cof} \circ (f_1)^\ast \circ F)(G \ltimes A)\] and $\LI \times_G A$ in $\LI\dash\Top$.
\end{proposition}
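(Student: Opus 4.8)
The plan is to trace the object $F(G\ltimes A)$ through the zig-zag of Quillen equivalences of \Cref{comparison} and \Cref{elmendorfLI} and to identify its image with $\LI\times_G A$. Since $(f_1)^\ast$ is restriction along a functor, hence homotopical, since $\Repl_{\cof}$ sends levelwise weak equivalences to levelwise weak equivalences between cofibrant objects, and since $\Lambda\circ(f_2)_!$ is left Quillen (as used in the proof of \Cref{lielihomotopical}), the composite $\Lambda\circ(f_2)_!\circ\Repl_{\cof}\circ(f_1)^\ast$ sends a zig-zag of levelwise weak equivalences of $\Orb$-spaces to a zig-zag of global equivalences of $\LI$-spaces and realizes the derived equivalence $\Ho(\Top_{\Orb})\simeq\Ho(\LI\dash\Top)$. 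It therefore suffices to produce a zig-zag of levelwise weak equivalences of $\Orb^\prime$-spaces connecting $(f_1)^\ast F(G\ltimes A)$ and $(f_2)^\ast\Phi(\LI\times_G A)$, where $\Phi(\LI\times_G A)$ is the $\Ogl$-space $K\mapsto(\LI\times_G A)^K$: applying $\Repl_{\cof}$ and $\Lambda\circ(f_2)_!$ to such a zig-zag and then invoking the derived unit of the Quillen equivalence $(f_2)^\ast\dashv(f_2)_!$ together with the derived counit of $\Lambda\dashv\Phi$ at the fibrant object $\LI\times_G A$ produces the asserted zig-zag of global equivalences.

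First I would compute the $\Orb$-space $F(G\ltimes A)$. By \Cref{Rfunctor} its value at a universal subgroup $K$ is $|\gmap(\B K,G\ltimes A)|$, and unwinding the mapping groupoid shows that $\gmap(\B K,G\ltimes A)$ is the action groupoid $G\ltimes B_K$: an object is a pair $(a,\beta)$ with $\beta\colon K\to G$ a continuous homomorphism and $a\in A^{\beta(K)}$, a $2$-morphism $(a,\beta)\to(a^\prime,\beta^\prime)$ is an element $g\in G$ with $g.a=a^\prime$ and $\beta^\prime=g\beta g^{-1}$, and $B_K=\{(a,\beta)\mid\beta\in\Hom(K,G),\ a\in A^{\beta(K)}\}$ is topologized as a subspace of $A\times\map(K,G)$. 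As $K$ and $G$ are compact Lie groups, $\Hom(K,G)$ is the disjoint union of its conjugation orbits, each homeomorphic to $G/\Cent(\beta)$ with $\Cent(\beta)$ the centralizer of $\beta(K)$ in $G$, so $B_K\cong\coprod_{[\beta]}G\times_{\Cent(\beta)}A^{\beta(K)}$ as $G$-spaces. Since $G$ is a Lie group, the nerve of $G\ltimes B_K$ is good (cf.~\Cref{isgood}), so $|\gmap(\B K,G\ltimes A)|$ is weakly equivalent to the homotopy quotient $\EC G\times_G B_K\cong\coprod_{[\beta]}\EC G\times_{\Cent(\beta)}A^{\beta(K)}$.

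Next I would compute $\Phi(\LI\times_G A)$. By the standard decomposition of fixed points (cf.~\cite[Prop.~B.17]{sch18}), $(\LI\times_G A)^K\cong\coprod_{[\alpha]}\LI^{\alpha}\times_{\Cent(\alpha)}A^{\alpha(K)}$, the sum ranging over conjugacy classes of continuous homomorphisms $\alpha\colon K\to G$ and $\LI^{\alpha}$ being the fixed points of the graph subgroup of $\alpha$; equivalently $(\LI\times_G A)^K\cong\tilde B_K/G$, where $\tilde B_K=\{(a,\beta,\varphi)\mid\beta\in\Hom(K,G),\ a\in A^{\beta(K)},\ \varphi\in\LI^{\beta}\}$ carries the $G$-action $g.(a,\beta,\varphi)=(g.a,g\beta g^{-1},\varphi g^{-1})$. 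Two observations drive the comparison. First, $\tilde B_K$ is a \emph{free} $G$-space (cancel $\varphi$), and since $G$ is compact Lie and $A$ — hence $\tilde B_K$ — is Hausdorff, $\tilde B_K\to\tilde B_K/G$ is a principal $G$-bundle, so by \cite[Prop.~A.7]{kor16} the natural map $|G\ltimes\tilde B_K|\to\tilde B_K/G=(\LI\times_G A)^K$ is a weak equivalence. Second, forgetting $\varphi$ gives a $G$-map $\tilde B_K\to B_K$ which over each conjugation orbit is the projection $G\times_{\Cent(\beta)}\bigl(A^{\beta(K)}\times\LI^{\beta}\bigr)\to G\times_{\Cent(\beta)}A^{\beta(K)}$ with contractible fibre $\LI^{\beta}$ — the spaces $\LI^{\beta}$ are contractible because $\LI=\LB(\R^\infty,\R^\infty)$ is a universal space for graph subgroups — so it induces a weak equivalence $|G\ltimes\tilde B_K|\to|G\ltimes B_K|$. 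Setting $N(K):=|G\ltimes\tilde B_K|$, these two maps assemble into a natural zig-zag $(f_1)^\ast F(G\ltimes A)\xleftarrow{\ \simeq\ }N\xrightarrow{\ \simeq\ }(f_2)^\ast\Phi(\LI\times_G A)$ in $\Top_{\Orb^\prime}$, provided one checks that $N$ carries an $\Orb^\prime$-space structure for which both maps are $\Orb^\prime$-natural, the targets being equipped with the structures pulled back along $f_1$ and $f_2$. This structure is forced: $\tilde B_K$ records exactly the data — a homomorphism $\beta$ together with a point $\varphi\in\LI^{\beta}$ — out of which the morphism space $\Orb^\prime(H,K)=\tilde E(H,K)\times_K\EC K$ is built.

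I expect the main obstacle to be exactly this last compatibility check: verifying that $N$, together with its two forgetful maps, forms a genuine diagram of $\Orb^\prime$-, $\Orb$- and $\Ogl$-spaces, i.e.\ that the composition law of $\Orb^\prime$ transported along $f_1$ and $f_2$, the structure maps defining $F$ in \Cref{Rfunctor}, and the action of $\Ogl$-morphisms on fixed-point spaces all match under the identifications above. A secondary, routine obstacle is the model-categorical bookkeeping of the first paragraph — cofibrant replacements, and which of $(f_1)^\ast$, $(f_2)_!$, $\Lambda$, $\Phi$ and $\Lambda\circ(f_2)_!$ is homotopical on which class of objects — all of which is already in place for \Cref{lielihomotopical}. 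Finally, one should record that every weak equivalence used requires only that $A$ — and the auxiliary spaces built from it — be Hausdorff: neither \cite[Prop.~A.7]{kor16} nor the contractibility of the spaces $\LI^{\beta}$ needs a CW structure, which is why the hypothesis here is weaker than that of \Cref{RC1m0}.
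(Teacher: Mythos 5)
Your proposal is correct and follows essentially the same route as the paper: your intermediate $\Orb^\prime$-space $N(K)=|G\ltimes\tilde B_K|$ is exactly the paper's $X(K)=\tilde\EC(K,G,A)\times_G\EC G$, and your two comparison maps (forgetting $\varphi$, with contractible fibres $\LI^\beta$, towards $(f_1)^\ast F(G\ltimes A)$; passing from the homotopy quotient of the free Hausdorff $G$-space $\tilde B_K$ to its orbit space, identified with $(\LI\times_G A)^K$, towards $(f_2)^\ast\Phi(\LI\times_G A)$) are the paper's maps $l$ and $\nu$, with the same formal reduction through the Quillen equivalences. The compatibility check you flag as the main obstacle is indeed what the paper spends its effort on, by adapting Körschgen's constructions.
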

In the case where $G \ltimes A$ admits the structure of a proper foliation Lie groupoid, this compares in particular the two constructions from \Cref{effli} and \Cref{grouporborb}, see also \Cref{lievstop} for a comparison between effective orbifolds and orbifolds.
\begin{corollary} \label{fororbifolds}
	Let $M$ be a manifold together with an almost free action of a universal subgroup $G$ of $\LI$ on $M$. There is zig-zag of global equivalences between $\lieli(G \ltimes M)$ and~$\LI \times_G M$. In particular, $\effli(\chi)=\LI \times_{\OG(n)} \Fr(\chi)$ and $\lieli(\OG(n) \ltimes \Fr(\chi))$ are globally equivalent for any effective orbifold $\chi$.
\end{corollary}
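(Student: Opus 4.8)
The plan is to read this off from \Cref{generalHausdorff} together with \Cref{efforbaregq}; the substantive work has already been carried out in \Cref{generalHausdorff}, so what remains is mostly bookkeeping.

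First I would observe that $G \ltimes M$ really is an object of $\Orbfld$, so that $\lieli(G \ltimes M)$ is defined. Since a universal subgroup $G$ of $\LI$ is a compact Lie group, its smooth action on the Hausdorff manifold $M$ is automatically proper, so $(s,t) \colon G \times M \to M \times M$ is proper and $G \ltimes M$ is a proper Lie groupoid; almost freeness says every stabilizer $G_x$ is finite, hence discrete, so $G \ltimes M$ is a foliation groupoid. Thus $G \ltimes M$ lies in $\Orbfld$, and by \Cref{lielifunctor} its image under $\overline F$ is simply $F(G \ltimes M)$ with $G \ltimes M$ viewed as a topological groupoid via the forgetful functor $\LieGrpd \to \TopGrpd$, which is the underlying topological groupoid of the smooth action groupoid.

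Next I would unwind \Cref{lieli}: by definition \[\lieli(G \ltimes M) = \bigl(\Lambda \circ (f_2)_{!} \circ \Repl_{\cof} \circ (f_1)^\ast \circ F\bigr)(G \ltimes M).\] A manifold is Hausdorff, so \Cref{generalHausdorff} applies with $A = M$ and produces a zig-zag of global equivalences between the right-hand side and $\LI \times_G M$. This establishes the first assertion.

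For the \enquote{in particular} statement, let $\chi$ be an effective orbifold of dimension $n$ and fix an embedding $\OG(n) \hookrightarrow \LI$ as a universal subgroup, which exists by \Cref{allareuniversal}. By \Cref{efforbaregq} the action of $\OG(n)$ on the manifold $\Fr(\chi)$ is almost free, so the first part applies with $M = \Fr(\chi)$ and $G = \OG(n)$ and yields a zig-zag of global equivalences between $\lieli(\OG(n) \ltimes \Fr(\chi))$ and $\LI \times_{\OG(n)} \Fr(\chi)$, which is exactly $\effli(\chi)$ by \Cref{functorLI}. Up to $\LI$-equivariant homeomorphism this is independent of the Riemannian metric and of the embedding of $\OG(n)$ by \Cref{framebundleindependent} and \Cref{allareuniversal}, so the conclusion is insensitive to these choices. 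The only point that needs a line of verification is the first one, namely recognizing $G \ltimes M$ as an orbifold so that $\lieli$ applies and agrees with the composite appearing in \Cref{generalHausdorff}; I do not anticipate any genuine obstacle beyond the work already done in \Cref{generalHausdorff}.
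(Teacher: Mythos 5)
Your proposal is correct and follows the same route the paper takes: the corollary is stated there as an immediate consequence of \Cref{generalHausdorff} applied with $A=M$ (a Hausdorff space), combined with \Cref{efforbaregq} for the frame-bundle case. Your additional verification that $G \ltimes M$ is a proper foliation groupoid, so that $\lieli$ applies and $\overline F(G\ltimes M)=F(G\ltimes M)$, is a reasonable piece of bookkeeping that the paper leaves implicit.
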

\begin{proof}[Proof of \Cref{generalHausdorff}]
	We will construct an object $X$ in $\Top_{\Orb^\prime}$ together with a weak equivalence from~$X$ to~$((f_1)^\ast \circ F)(G \ltimes A)$ and a weak equivalence from~$X$ to~$\left((f_2)^\ast \circ \Phi\right)(\LI \times_G A)$. For a moment, let us assume that such an object $X$ exists. 
	%The cofibrant replacement $\Repl_{\cof}X$ of $X$ is then a common cofibrant replacement of both~$((f_1)^\ast \circ F)(G \ltimes A)$ and~$\left((f_2)^\ast \circ \Phi\right)(\LI \times_G A)$. 
	Pre-composing the second weak equivalence with the natural weak equivalence $\Repl_{\cof}X \to X$ yields a map \[\Repl_{\cof}X \longrightarrow \left((f_2)^\ast \circ \Phi\right)(\LI \times_G A)\] with adjoint \[(\Lambda \circ (f_2)_{!})(\Repl_{\cof}X) \longrightarrow \LI \times_G A\] which also is a weak equivalence because the Quillen pairs involved here are Quillen equivalences and all objects are fibrant in the universal projective model structure on~$\LI\dash\Top$. 
	Left Quillen functors are homotopical on cofibrant objects, so there is an induced weak equivalence \[(\Lambda \circ (f_2)_{!})(\Repl_{\cof}X) \longrightarrow (\Lambda \circ (f_2)_{!})\left( \left( \Repl_{\cof} \circ (f_1)^\ast \circ F \right)(G \ltimes A)\right) \,, \] which proves the claim.
\end{proof}
We will now construct an object $X$ in $\Top_{\Orb^\prime}$ satisfying the assumptions made above. 
If $A=\ast$, then $F(G \ltimes A)$ and $\Phi(\LI \times_G A)$ are the functors which represent the object $G$ (in $\Orb$ or $\Ogl$, respectively). It follows from the results in Körschgen's paper \cite{kor16} that we may choose $X$ to be the object in $\Top_{\Orb^\prime}$ which is represented by $G$. Therefore, the strategy for general $A$ is to adapt the constructions from \cite{kor16}. A brief overview of these constructions can be found in \Cref{comparison}.
We want to remind the reader that we are using left $G$-spaces while Körschgen uses right $G$-spaces.

For the rest of this section let us fix a universal subgroup $G$ of $\LI$ and a Hausdorff $G$-space $A$.
\begin{construction} \label{compareX}
	Let $K$ be a universal subgroup of $\LI$. The group $K \times G$ acts on $\LI$ via~$(k,g).\varphi=k\varphi g^{-1}$ for $k \in K$, $g \in G$ and $\varphi \in \LI$. We define \[\tilde \EC(K,G,A) = \setdef{(\alpha,\varphi,x) \in \map(K,G) \times \LI \times A}{\varphi \in \LI^\alpha,x \in A^{\alpha(K)}} \,. \] This is a $G$-space via \[g.(\alpha,\varphi,x) = (\alpha^g,\varphi g^{-1},g.x) \] for $g$ in $G$.  %One can see this by using that the diagonal map of any topological space is a homeomorphism and hence \begin{align*}&\tilde \EC(H,G,A) \\ =& \setdef{(\alpha,\varphi,\alpha^\prime,x) \in \map(H,G) \times \LI \times \map(H,G) \times A}{\varphi \in \LI^\alpha,x\in A^{\alpha(H},\alpha = \alpha^\prime} \end{align*} which is a description of the pullback as a subset space of a product. 
	We define the $\Orb^\prime$-space $X$ by \[X(K) = \tilde \EC(K,G,A) \times_G \EC G \cong|G \ltimes\tilde \EC(K,G,A)| \] for a universal subgroup $K$ of $\LI$. Functoriality is defined similarly as is \cite[Def. 3.5]{kor16}: We define a composition law of topological groupoids \[(K \ltimes \tilde \EC(H,K)) \times (G \ltimes \tilde \EC(K,G,A)) \longrightarrow G \ltimes \tilde \EC(H,G,A)\] which is given by
	%\begin{align*} \tilde \EC(H,K) \times \tilde \EC(K,G,A) & \longrightarrow \tilde \EC(H,G,A) \,, \\
	%(\alpha,\varphi) , (\beta,\psi,x) & \longmapsto (\beta \circ \alpha,\varphi \psi,x) \end{align*}
	%on the $0$-level and 
	\begin{align*}
	K \times \tilde \EC(H,K) \times G \times \tilde \EC(K,G,A) & \longrightarrow G \times \tilde \EC(H,G,A) \,, \\
	(k,\alpha,\varphi) , (g,\beta,\psi,x) & \longmapsto (g\beta(k),\beta \circ \alpha,\varphi \psi,x)
	\end{align*}
	on arrows. The verification that this indeed gives rise to a functor between topological groupoids is analogous to the one in \cite[Prop. 3.7]{kor16}. Using that \begin{align*} {}&\left|K \ltimes \tilde\EC(H,K) \right| \times \left| G \ltimes \tilde\EC(K,G,A) \right| \\ \cong{}& \left|\left(K \ltimes  \tilde\EC(H,K) \right) \times \left(G \ltimes \tilde\EC(K,G,A) \right)\right| \end{align*} by \Cref{productfgr}, we obtain a map \[ \left| K \ltimes \tilde E(H,K) \right|\times \left|G \ltimes \tilde\EC(K,G,A)\right| \longrightarrow \left|G \ltimes \tilde\EC(H,G,A)\right| \] and hence a map \[\Orb^\prime(H,K) \times X(K) \longrightarrow X(H) \,. \] In the case where $A=\ast$, this coincides with the composition maps established by Körschgen in \cite[Prop. 3.9]{kor16}. The verification of the associativity and unitality with respect to the composition in $\Orb^\prime$ are a computation on the level of groupoids similar to the one by Körschgen for the case~$A=\ast$ in \cite[Prop. 3.8]{kor16}. Afterwards, one uses that the geometric realization is strongly monoidal, compare \cite[Prop. 3.9]{kor16}. We hence obtain an $\Orb^\prime$-space~$X$.
\end{construction}
We are now going to define the weak equivalence of $\Orb^\prime$-spaces \[l \colon X \longrightarrow ((f_1)^\ast \circ F)(G \ltimes A) \,.\]
\begin{construction}
	Let $K$ be a universal subgroup of $\LI$. Using that \[\gmap(\B K,G \ltimes A)_0 \cong \setdef{(\alpha,x) \in \map(K,G) \times A}{x \in A^{\alpha(K)}} \] with $G$-action given by $g.(\alpha,x) = (\alpha^g,g.x)$,  there is a $G$-equivariant (restricted) projection map \[\rho_A^K \colon \tilde \EC(K,G,A) \longrightarrow \gmap(\B K,\G \ltimes A)_0  \] such that the following diagram is a pullback: \[\begin{tikzcd}[row sep=large]
	\tilde \EC(K,G,A) \ar[r] \ar[d,swap,"\rho_A^K"] & \tilde \EC(K,G) \ar[d,"\rho^K"]  \\
	\gmap(\B K,G \ltimes A)_0 \ar[r] & \map(K,G)
	\end{tikzcd}\]
	where the horizontal maps are also projections (forgetting the factor $A$) and the map $\rho^K=\rho^K_\ast$ is also defined in \cite[Def. 2.12]{kor16}. 
	The map $\rho_A^K$ gives rise to the map \[\rho_A^K \times_G \EC G \colon \tilde E(K,G,A) \times_G \EC G \longrightarrow \gmap(\B K,G \ltimes A)_0 \times_G \EC G \,. \]
	Note that \begin{align*}\left((f_1)^\ast \circ F\right)(G \ltimes A)(K) ={}& |\gmap(\B K,G \ltimes A)|\\ \cong{}&|G \ltimes \gmap(\B K,G\ltimes A)_0|\\ \cong{}& \gmap(\B K,G \ltimes A)_0 \times_G \EC G \,. \end{align*}
	Using this homeomorphism, the map $\rho_A^K \times_G \EC G$ rewrites as a map \[l(K) \colon X(K) \longrightarrow \left((f_1)^\ast \circ F \right)(G \ltimes A)(K)\] and a computation on the level of groupoids similar to \cite[Prop. 3.8]{kor16} shows that these maps are compatible with the action maps.
	We obtain a map $l \colon X \to \left( (f_1)^\ast \circ F \right)(G \ltimes A)$.
\end{construction}
\begin{lemma} \label{firstcompare}
	The map \[l \colon X \longrightarrow \left((f_1)^\ast \circ F \right)(G \ltimes A)\] is a weak equivalence of $\Orb^\prime$-spaces. 
\end{lemma}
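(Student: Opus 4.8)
The plan is to show that $l$ is a levelwise weak equivalence, which is what is needed since weak equivalences of $\Orb^\prime$-spaces are detected levelwise in the projective model structure. So fix a universal subgroup $K$ of $\LI$. Under the homeomorphism $\bigl((f_1)^\ast \circ F\bigr)(G \ltimes A)(K) \cong \gmap(\B K,G\ltimes A)_0 \times_G \EC G$ recorded above, the map $l(K)$ is identified with $\rho_A^K \times_G \EC G$. The Borel construction $(-) \times_G \EC G$ takes weak equivalences of $G$-spaces to weak equivalences (it computes homotopy orbits and $\EC G$ is a functorial free contractible model; this is the same input that underlies the analogous comparisons in \cite{kor16}), so it suffices to prove that the $G$-equivariant projection
\[
\rho_A^K \colon \tilde\EC(K,G,A) \longrightarrow \gmap(\B K,G\ltimes A)_0
\]
is a (nonequivariant) weak equivalence.

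To this end I would use the pullback square from the construction of $l$, which exhibits $\rho_A^K$ as the base change of $\rho^K \colon \tilde\EC(K,G) \to \map(K,G)$ along the map $\gmap(\B K,G\ltimes A)_0 \to \map(K,G)$ that forgets the $A$-coordinate, where $\map(K,G)$ denotes the space of continuous homomorphisms. The key point is that $\rho^K$ is an \emph{acyclic} Serre fibration. It is a weak equivalence: its fibre over a homomorphism $\alpha \colon K \to G$ is the fixed point space $\LI^\alpha$ of the graph subgroup of $\alpha$ in $K \times G$, which is weakly contractible because, as in the proof of \Cref{fexLhom}, $\LI$ is $(K\times G)$-equivariantly homotopy equivalent to a universal space for the family of graph subgroups (via \cite[Prop.~A.10]{sch19} and \cite[Prop.~1.1.26(i)]{sch18}), so its fixed points under any member of that family are contractible; compare \cite[Prop.~2.18]{kor16}. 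And it is a Serre fibration: a path in $\map(K,G)$ is a path of homomorphisms of compact Lie groups, hence a conjugation path, and such conjugations can be chosen continuously (slices for the conjugation action of $G$ on $\map(K,G)$); multiplying a given element of $\LI^{\alpha_0}$ on the right by the corresponding path in $G$ then lifts the homotopy fibrewise.

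Granting this, $\rho_A^K$ is the pullback of the acyclic Serre fibration $\rho^K$, and acyclic fibrations are stable under pullback, so $\rho_A^K$ is itself an acyclic Serre fibration; in particular it is a weak equivalence. Combined with the first paragraph this shows that $l(K)$, and hence $l$, is a weak equivalence of $\Orb^\prime$-spaces.

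The step I expect to be the main obstacle is verifying that $\rho^K$ (equivalently $\rho_A^K$) is a genuine fibration and not merely a weak equivalence, since pullbacks of arbitrary weak equivalences need not be weak equivalences. Concretely this amounts to the continuity of the slices for the conjugation action on $\map(K,G)$, i.e.\ to performing the path lifting coherently over a neighbourhood; alternatively, one can invoke the corresponding local-triviality statement about $\rho^K$ already present in \cite{kor16} and check that the inert extra coordinate $x \in A^{\alpha(K)}$ does not interfere, which is exactly what the pullback square encodes.
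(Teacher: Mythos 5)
Your proposal is correct and follows essentially the same route as the paper: identify $l(K)$ with $\rho_A^K \times_G \EC G$, observe that $\rho_A^K$ is the pullback of $\rho^K \colon \tilde\EC(K,G) \to \map(K,G)$, which is a fibration with contractible fibres (the paper cites \cite[Prop.~2.15]{kor16} for the fibre-bundle statement, exactly the local-triviality input you flag as the alternative to your hand-made path-lifting sketch), conclude that $\rho_A^K$ is a weak equivalence by pullback stability, and finish using that the Borel construction is homotopical.
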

\begin{proof}
	Let $K$ be a universal subgroup of $\LI$.
	The map \[\rho^K \colon \tilde\EC(K,G) \longrightarrow \map(K,G)\] is a fiber bundle with contractible fiber by \cite[Prop. 2.15]{kor16} and so is the pullback \[\rho_A^K \colon \tilde \EC(K,G,A) \longrightarrow \gmap(\B K,\G \ltimes A)_0 \,.\] The rest of the proof is completely analogous to the one in \cite[Prop. 2.18]{kor16}. We basically use that the homotopy quotient construction is homotopical, so the induced map $\rho_A^K \times_G EG$ is again a weak equivalence and so is $l(K)$ by definition. 
\end{proof}
Now we are going to define the weak equivalence of $\Orb^\prime$-spaces \[\nu \colon X \longrightarrow \left((f_2)^\ast  \circ \Phi \right)(\LI \times_G A)\,.\]
\begin{lemma}
	Let $K$ be a universal subgroup of $\LI$. The map \begin{align*}\tilde E(K,G,A)/G & \longrightarrow (\LI \times_G A)^K \,, \\
	[\alpha,\varphi,x] & \longmapsto [\varphi,x]  \end{align*}is well-defined and a homeomorphism.
\end{lemma}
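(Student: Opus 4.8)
The plan is to verify well-definedness and continuity of the displayed map directly, then to exhibit an explicit inverse; the only genuinely non-formal point will be the continuity of that inverse, which I would deduce from the case $A=\ast$ treated by Körschgen.

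First I would observe that $(\alpha,\varphi,x)\mapsto[\varphi,x]$ is continuous, being the composite of the inclusion $\tilde E(K,G,A)\hookrightarrow\LI\times A$ (forgetting the $\map(K,G)$-coordinate) with the quotient map $\LI\times A\to\LI\times_G A$, and that its image lies in $(\LI\times_G A)^K$: for $(\alpha,\varphi,x)\in\tilde E(K,G,A)$ and $k\in K$ one has $k\varphi=\varphi\alpha(k)$ since $\varphi\in\LI^\alpha$, hence $k.[\varphi,x]=[\varphi\alpha(k),x]=[\varphi,\alpha(k)^{-1}.x]=[\varphi,x]$ because $x\in A^{\alpha(K)}$. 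Since $g.(\alpha,\varphi,x)=(\alpha^g,\varphi g^{-1},g.x)$ has the same image $[\varphi g^{-1},g.x]=[\varphi,x]$, the map descends to a continuous map $\Psi\colon\tilde E(K,G,A)/G\to(\LI\times_G A)^K$ sending $[\alpha,\varphi,x]$ to $[\varphi,x]$. It then remains to show that $\Psi$ is bijective with continuous inverse.

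For the inverse I would argue as follows. Given $[\varphi,x]\in(\LI\times_G A)^K$, the identity $[k\varphi,x]=[\varphi,x]$ in $\LI\times_G A$ supplies, for each $k\in K$, an element $g_k\in G$ with $k\varphi=\varphi g_k^{-1}$ and $g_k.x=x$; since every element of $\LI$ is injective, $g_k$ is unique, and setting $\alpha(k):=g_k^{-1}$ the relation $\varphi\,\alpha(k_1k_2)=k_1k_2\varphi=\varphi\,\alpha(k_1)\alpha(k_2)$ together with injectivity of $\varphi$ shows that $\alpha\colon K\to G$ is a (necessarily continuous) homomorphism with $\varphi\in\LI^\alpha$ and $x\in A^{\alpha(K)}$, so $(\alpha,\varphi,x)\in\tilde E(K,G,A)$. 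Replacing $(\varphi,x)$ by $(\varphi g^{-1},g.x)$ replaces $\alpha$ by $\alpha^g$, i.e. replaces $(\alpha,\varphi,x)$ by $g.(\alpha,\varphi,x)$, so $[\varphi,x]\mapsto[\alpha,\varphi,x]$ is well-defined; by the uniqueness of $\alpha$ it is a two-sided inverse of $\Psi$ on underlying sets.

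The hard part will be the continuity of this inverse, equivalently that $\Psi$ is a homeomorphism, and here I would reduce to \cite[Prop.~2.20]{kor16}, which is exactly the case $A=\ast$. Writing $W\subseteq\LI\times A$ for the (saturated) preimage of $(\LI\times_G A)^K$ under the quotient map, the restriction $W\to(\LI\times_G A)^K$ is a quotient map, and $\tilde E(K,G,A)\to W$, $(\alpha,\varphi,x)\mapsto(\varphi,x)$, is a homeomorphism, since its inverse sends $(\varphi,x)$ to $(\alpha_\varphi,\varphi,x)$ with $\alpha_\varphi(k)=\varphi^{-1}(k\varphi)$, where $\varphi^{-1}$ is the continuous linear left inverse of the isometric embedding $\varphi$ — precisely the continuity extracted in Körschgen's proof, into which the $A$-coordinate enters only as an inert factor. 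Composing, $\tilde E(K,G,A)\to(\LI\times_G A)^K$ is a quotient map whose fibres are the free $G$-orbits, so it factors through a homeomorphism $\tilde E(K,G,A)/G\cong(\LI\times_G A)^K$, which is $\Psi$. I expect this last continuity point to be the only real obstacle; the rest is a direct verification.
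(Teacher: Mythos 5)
Your proof is correct and follows essentially the same route as the paper: both reduce the only nontrivial point (continuity of $\varphi \mapsto \alpha_\varphi$, hence of the inverse) to the proof of \cite[Prop.~2.20]{kor16}, observing that the $A$-coordinate is carried along inertly — the paper phrases this as ``replace $\LI$ by the $(K\times G)$-space $\LI \times A$,'' while you spell out the elementary verifications (well-definedness, bijectivity, descent to the orbit space) that the paper leaves implicit.
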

\begin{proof}
	We can adapt the proof of \cite[Prop. 2.20]{kor16}. More in detail, the proof does not use any properties of $\LI$ other than being a $(K \times G)$-space such that the restricted $G$-action is free. These assumption also hold for the space $\LI \times A$ where $K \times G$ acts via \[(k,g).(\varphi,x)=(k\varphi g^{-1},g.x)\] for $k \in K$, $g \in G$, $\varphi \in \LI$ and $x \in A$. We may replace $\LI$ with $\LI \times A$. The space $\tilde \EC(K,G)$ then becomes $\tilde \EC(K,G,A)$ and $(\LI/G)^K$ becomes $(\LI \times_G A)^K$.
\end{proof}
\begin{construction}
	Note that \[\left((f_2)^\ast \circ \Phi \right)(\LI \times_G A)(K) = \left(\LI\times_G A\right)^K \,.\] Using the above lemma, the canonical map \[\tilde \EC(K,G,A) \times_G \EC G \to \tilde \EC(K,G,A)/G\] defines a map \[\nu(K) \colon X(K) \longrightarrow \left((f_2)^\ast \circ \Phi \right)(\LI \times_G A)(K) \,. \] These maps assemble into a map $\nu \colon X \to \left((f_2)^\ast \circ \Phi \right)(\LI \times_G A)$. This can be proven similarly as in \cite[Prop. 3.12]{kor16} because functoriality on both sides does not interact with the elements of $A$.
\end{construction}
\begin{lemma} \label{secondcompare}
	The map \[\nu \colon X \longrightarrow \left((f_2)^\ast \circ \Phi \right)(\LI \times_G A)\] is a weak equivalence of $\Orb^\prime$-spaces.
\end{lemma}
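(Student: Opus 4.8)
The plan is to follow the proof of \cite[Prop.~2.22]{kor16}, of which this lemma is the essentially verbatim generalization obtained by carrying the extra factor $A$ along. For a fixed universal subgroup $K$ of $\LI$ I want to recognise $\nu(K)$, up to the homeomorphism supplied by the lemma immediately preceding this one, as the canonical comparison map
\[\tilde\EC(K,G,A)\times_G\EC G\longrightarrow \tilde\EC(K,G,A)/G\]
from a Borel construction to the corresponding orbit space, induced by $\EC G\to\ast$; the target is identified with $\left((f_2)^\ast\circ\Phi\right)(\LI\times_G A)(K)=(\LI\times_G A)^K$ precisely by that lemma, and a groupoid-level check like the ones already carried out shows this identification is natural in $K$, so it suffices to treat each level.

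First I would check that the $G$-action on $\tilde\EC(K,G,A)$ is free. This is immediate from the formula $g.(\alpha,\varphi,x)=(\alpha^g,\varphi g^{-1},g.x)$: the action on the middle coordinate is right translation $\varphi\mapsto\varphi g^{-1}$, and since every element of $G\subset\LI$ is an invertible linear isometry and elements of $\LI$ are injective, $\varphi g^{-1}=\varphi$ forces $g=1$; hence already this coordinate makes the whole action free. I would also record that $\tilde\EC(K,G,A)$ is Hausdorff, being a subspace of $\map(K,G)\times\LI\times A$, with $A$ Hausdorff by the standing assumption of this section.

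Finally, since $\EC G\to\ast$ is a $(K\times G)$-equivariant weak equivalence of free $G$-spaces and the homotopy quotient $-\times_G\EC G$ preserves weak equivalences, the displayed map is a weak equivalence of spaces — this is exactly the input \cite[Prop.~A.7]{kor16} used in \cite[Prop.~2.22]{kor16}. Thus $\nu(K)$ is a weak equivalence for every universal subgroup $K$, so $\nu$ is a weak equivalence of $\Orb^\prime$-spaces by definition of the projective model structure. I do not expect a genuine obstacle here; the only point that requires any care — and it is the same one as in \cite{kor16} — is confirming that $\tilde\EC(K,G,A)$ meets the point-set hypotheses under which the Borel-construction-versus-orbit-space comparison is valid, which the Hausdorffness of $A$ guarantees.
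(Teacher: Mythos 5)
Your proposal is correct and follows essentially the same route as the paper: identify $\nu(K)$ as the Borel-construction-to-orbit-space comparison for the free $G$-space $\tilde\EC(K,G,A)$, verify Hausdorffness using the standing assumption on $A$, and invoke \cite[Thm.~A.7]{kor16}. Your explicit check of freeness via the $\LI$-coordinate is a welcome detail the paper leaves implicit.
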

\begin{proof} 
	The space $A$ is Hausdorff by assumption, the space $\map(K,G)$ is Hausdorff because it is metrizable, the space $\LI$ is Hausdorff by \cite[Prop. A.1]{sch19}. The product of Hausdorff spaces is Hausdorff and we conclude that the space $\tilde \EC(K,G,A)$ is Hausdorff because it is a subspace of a Hausdorff space. The natural map \[\tilde \EC(K,G,A) \times_G \EC G \longrightarrow \tilde \EC(K,G,A)/G\] from the homotopy quotient to the orbit space is a weak equivalence for all universal subgroups $K$ of $\LI$ by \cite[Thm. A.7]{kor16} because $\tilde \EC(K,G,A)$ is a free $G$-space and Hausdorff.
\end{proof}
\Cref{firstcompare} and \Cref{secondcompare} show that the object $X$ defined in \Cref{compareX} satisfies the assumptions made in the proof of \Cref{generalHausdorff}.

Finally, we want to verify that the underlying non-equivariant homotopy type of $\lieli(\G)$ is weakly equivalent to $|\G|$, compare \Cref{homotopygrouporbfld}. 
\begin{proposition} \label{nonequi}
	Let $\G$ be an orbifold. The underlying space of $\lieli(\G)$ is naturally weakly equivalent to $|\G|$.
\end{proposition}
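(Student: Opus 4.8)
The plan is to evaluate everything at the trivial subgroup $\tg\subseteq\LI$, which is one of the objects shared by all three indexing categories $\Orb$, $\Orb^\prime$ and $\Ogl$, and to use that the underlying space of an $\LI$-space $X$ is naturally $\Phi(X)(\tg)\cong X^{\tg}=X$. The starting observation is that $\overline F(\G)(\tg)=\bigl|\gmap(\B\tg,\G)\bigr|\cong|\G|$ naturally in $\G$: a morphism of groupoids $\B\tg\to\G$ is just an object of $\G$ and a $2$-morphism is just a morphism of $\G$, so $\gmap(\B\tg,\G)\cong\G$ as topological groupoids, and the geometric realization is a functor. Since $f_1\colon\Orb^\prime\to\Orb$ and $f_2\colon\Orb^\prime\to\Ogl$ are the identity on objects, this value at $\tg$ should be carried along the zig-zag of Quillen equivalences essentially unchanged.

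Concretely, first I would note $\bigl((f_1)^\ast\overline F(\G)\bigr)(\tg)=\overline F(\G)(\tg)$ literally, and that the natural cofibrant-replacement weak equivalence $\Repl_{\cof}\bigl((f_1)^\ast\overline F(\G)\bigr)\xrightarrow{\ \sim\ }(f_1)^\ast\overline F(\G)$, being a levelwise weak equivalence, evaluates at $\tg$ to a natural weak equivalence with target $|\G|$. Next I would show that for any $\Orb^\prime$-space $Z$ there is a natural weak equivalence $\bigl((f_2)_!Z\bigr)(\tg)\xrightarrow{\ \sim\ }Z(\tg)$: since $(f_2)^\ast\dashv(f_2)_!$ is a Quillen equivalence and both functors are homotopical (all objects of the projective model structures involved are fibrant, and $(f_2)^\ast$ is levelwise), the counit $(f_2)^\ast(f_2)_!Z\to Z$ is a natural weak equivalence, and evaluating at $\tg=f_2(\tg)$ gives the claim. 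Finally, for a cofibrant $\Ogl$-space $W$ the derived unit of $\Lambda\dashv\Phi$ is a natural weak equivalence $W\xrightarrow{\ \sim\ }\Phi(\Lambda W)$, hence at $\tg$ a natural weak equivalence $W(\tg)\xrightarrow{\ \sim\ }\Phi(\Lambda W)(\tg)\cong(\Lambda W)^{\tg}$, which is the underlying space of $\Lambda W$. Composing these three steps with $Z=\Repl_{\cof}\bigl((f_1)^\ast\overline F(\G)\bigr)$ and $W=(f_2)_!Z$ produces the desired natural zig-zag of weak equivalences between the underlying space of $\lieli(\G)=\Lambda\bigl((f_2)_!(\Repl_{\cof}((f_1)^\ast\overline F(\G))))\bigr)$ and $|\G|$.

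The step I expect to be the main obstacle is the model-categorical bookkeeping around $(f_2)_!$: to apply the derived unit of $\Lambda\dashv\Phi$ to $W=(f_2)_!Z$ one needs $(f_2)_!$ to take the cofibrant object $Z$ to a cofibrant $\Ogl$-space, i.e.\ that restriction and Kan extension along the bijective-on-objects Dwyer--Kan equivalence $f_2$ are Quillen equivalences preserving cofibrancy — this is implicit in the fact, used in the proof of \Cref{lielihomotopical}, that $\Lambda\circ(f_2)_!$ is left Quillen. One also has to make sure that every identification above ($\gmap(\B\tg,\G)\cong\G$, the counit of $(f_2)^\ast\dashv(f_2)_!$, the derived unit of $\Lambda\dashv\Phi$, and the cofibrant-replacement map) is natural in $\G$, so that the composite is a genuine zig-zag of natural transformations. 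As a consistency check for the bare statement one can argue more concretely: by \Cref{gqc} every orbifold $\G$ is essentially equivalent to some $G\ltimes M$ with $G$ a compact Lie group acting almost freely, which by \Cref{allareuniversal} we may take to be universal in $\LI$; then \Cref{lielihomotopical} and \Cref{fororbifolds} identify the underlying space of $\lieli(\G)$ up to weak equivalence with $\LI\times_G M$, and since $\LI$ is non-equivariantly contractible with $G$ acting freely on it by precomposition, $\LI\times_G M$ is a model of the Borel construction $\EC G\times_G M$, which is the geometric realization of the nerve of $G\ltimes M$; finally this is weakly equivalent to $|\G|$ because the thin realization of the nerve of a proper foliation Lie groupoid is invariant under essential equivalences, combining \Cref{fgree} with the goodness of the nerve as in \Cref{isgood}.
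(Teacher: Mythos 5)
Your argument is essentially the paper's own proof: both evaluate at the trivial universal subgroup $\tg$, use that $f_1$ and $f_2$ are the identity on objects so that $(f_1)^\ast$ and $(f_2)^\ast$ do not change the value at $\tg$, and then peel off $\Repl_{\cof}$, $(f_2)_!$ and $\Lambda$ via the levelwise weak equivalence of the cofibrant replacement and the derived counit of $(f_2)^\ast\dashv(f_2)_!$ and unit of $\Lambda\dashv\Phi$ (valid since all objects are fibrant), ending with $\overline F(\G)(\tg)=|\gmap(\B\tg,\G)|\cong|\G|$. The cofibrancy bookkeeping around $(f_2)_!$ that you flag is likewise left implicit in the paper, so your write-up is, if anything, slightly more careful on that point.
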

\begin{proof}
	Let $\tg$ denote the trivial subgroup of $\LI$. The adjunction units $\Phi \circ \Lambda$ %\todo{in fact an isomorphism but not needed}
	and $(f_2)^\ast \circ (f_2)_!$ are equivalences on cofibrant objects because all objects in $\Top_{\Ogl}$, respectively $\LI\dash\Top$ with the universal projective model structure, are fibrant. We conclude that \begin{align*} \left(\lieli \G \right)^{\tg} ={}& (\Phi \circ \lieli)(\G)(\tg) \\ 
	={}& (\Phi \circ \Lambda \circ (f_2)_! \circ \Repl_{\cof} \circ (f_1)^\ast \circ \overline F)(\G)(\tg) \\ \simeq{}& ((f_2)_! \circ \Repl_{\cof} \circ (f_1)^\ast \circ \overline F)(\G)(\tg) \\
	={}& ((f_2)^\ast \circ (f_2)_! \circ \Repl_{\cof} \circ (f_1)^\ast \circ \overline F)(\G)(\tg) \\
	\simeq{}& \left(\Repl_{\cof} \circ (f_1)^\ast \circ \overline F\right)(\G)(\tg) \\
	\simeq{}& ((f_1)^\ast \circ \overline F)(\G)(\tg) \\
	={}& \overline F(\G)(\tg)=|\gmap(\B e,\G)|\cong{} |\G| \,. \end{align*} The weak equivalences are induced by adjunction units or by the cofibrant replacement functor and hence natural.%\todo{weg?}
\end{proof}
\section{Towards Stable Global Homotopy Theory} \label{SGHB} 
\subsection{Orthogonal Spaces and Orthogonal Spectra} \label{ortspc}
We briefly recall some basic properties of orthogonal spaces and orthogonal spectra which are discussed in great detail in \cite{sch18}.
Orthogonal spaces are another model for unstable global homotopy types. But it is much easier to switch to orthogonal spectra which model stable global homotopy types.

We require all representations on inner product spaces to be orthogonal representations.
\begin{definition}[Orthogonal Spaces]
	Let $V$ and $W$ be two finite-dimensional inner product spaces. Let~$\LB(V,W)$ denote the space of linear isometric maps between $V$ and $W$ topologized as a Stiefel manifold, compare \cite[Sec. 1.1]{sch18}. \begin{comment}We topologize this set as follows. If $\dim V > \dim W$, then $\LB(V,W)=\emptyset$. If $\dim V \le \dim W$, choose a linear isometric embedding $\varphi \colon V \to W$. Let $\varphi^{\bot}$ denote the orthogonal complement of $\varphi(V)$ in $W$. Then  \begin{align*}
	\OC(W)/\OC(\varphi^\bot) & \longrightarrow \LB(V,W) \\
	A \cdot \OC(\varphi^{\bot}) & \longmapsto A \circ \varphi
	\end{align*}
	is a bijection. We define the topology on $\LB(V,W)$ such that this becomes a homeomorphism. One can check that the topology is independent of the choice of $\varphi$. \todo{check this?}
	\end{comment}
	Let $\LB$ be the topological category with objects all finite-dimensional inner product spaces and morphism space $\LB(V,W)$ between two inner product spaces $V$ and $W$.
	\begin{comment}
	One can check that the composition map actually is continuous. \todo{check this?}
	\end{comment}
	An \defn{orthogonal space} is
	a continuous functor \[\LB \longrightarrow \Top \,.\] A \defn{morphism} between two orthogonal spaces is a continuous natural transformation between them. 
	We write $\Spc$ for the category of orthogonal spaces and morphisms between them. 
\end{definition}
\begin{comment}
Let $G$ be a compact Lie group, let $V$ be a $G$-representation and let $X$ be an orthogonal space. The space $X(V)$ carries the structure of a $G$-space by the induced action of~$G$ on~$X(V)$. Using this, orthogonal spaces can be used to model unstable equivariant homotopy types, see \Cref{adjevsemifree}. A definition of equivalences of orthogonal spaces should capture such phenomena. 

The most obvious (senseful) definition of an equivalence between orthogonal spaces is a \defn{strong level equivalence}, that is a morphism $f \colon X \to Y$ such that $f(V)^G \colon X(V)^G \to Y(V)^G$ is a weak equivalence for all compact Lie groups $G$ and all $G$-representation $V$.
These strong level equivalences as weak equivalence are actually part of the so-called \defn{strong level model structure} on orthogonal space where fibrations are defined as levelwise Serre fibrations on fixed point spaces, see \cite[Proposition 1.2.10]{sch18} for details. 

However, this is too strict of a notion for our purposes. We want to compare orthogonal spaces to $\LI$-spaces together with global equivalences of $\LI$-spaces. In $\LI\dash\Top$ we do not consider finite-dimensional representations but infinite-dimensional complete universes. We therefore don't want to require the maps $f(V)^G \colon X(V)^G \to Y(V)^G$ to be weak equivalences. We just want to require this for some kind of homotopy colimit over all $G$-representations $V$. We now make this precise. 
\end{comment}
\begin{definition}[Global equivalence]
	A morphism $f \colon X \to Y$ of orthogonal spaces is a \defn{global equivalence} if for every finite-dimensional $G$-representation $V$, every positive integer $n$ and every commutative diagram of the form \[\begin{tikzcd}[sep = large]
	S^{n-1} \ar[d,hook] \ar[r,"\alpha"] & X(V)^G \ar[d,"f(V)^G"] \\
	D^n \ar[r,,swap,"\beta"] & Y(V)^G \,,
	\end{tikzcd} \] there is a finite-dimensional inner product space $W$ together with a $G$-equivariant linear isometric embedding $\varphi \colon V \to W$ and a lift $l \colon D^n \to X(W)^G$ such that the in the following diagram  \[\begin{tikzcd}[sep = huge]
	S^{n-1} \ar[d,hook] \ar[r,"\alpha"] & X(V)^G \ar[r,"X(\varphi)^G"] & X(W)^G \ar[d,"f(W)^G"] \\
	D^n \ar[urr,dashed,"l"] \ar[r,swap,"\beta"] & Y(V)^G \ar[r,swap,"Y(\varphi)^G"] & Y(W)^G
	\end{tikzcd} \] the upper left triangle strictly commutes and the lower right triangle commutes up to homotopy relative $S^{n-1}$.
\end{definition}
\begin{remark}[Model structures on $\Spc$]
	The global equivalences are part of two model structures on the category of orthogonal spaces. 
	%The \defn{global model structure} is the left Bousfield localization of the strong level model structure at the global equivalences. In particular, the weak equivalences are precisely the global equivalence and any fibration is a strong level fibration. The acyclic fibrations are the same as for the strong level model structure, see \cite[Prop. 1.2.21]{sch18}.
	The \defn{global model structure} is established in \cite[Prop. 1.2.21]{sch18} and the \defn{positive global model structure} in \cite[Prop. 1.2.23]{sch18}.
	%The \defn{positive global model structure} has more fibrations and less cofibrations than the global model structure. It can be described as the left Bousfield localization of the positive level model structure at the global equivalence. Here the positive level model structure is defined similarly as the strong level model structure but with respect to the positive-dimensional representations only, see \cite[Prop. 1.2.23]{sch18}.
\end{remark}
Orthogonal spaces are closely related to $\mathcal L$-spaces. 
\begin{construction} \label{atRinf}
	Let $X$ be an orthogonal space and let $\mathcal V$ be an inner product space of countably infinite dimension. Let $s(\mathcal V)$ denote the poset of finite-dimensional subspaces of $\mathcal V$, ordered by inclusion. We define the value of $X$ at $\mathcal V$ by \[X(\mathcal V) = \colim\limits_{V \in s(\mathcal V)} X(V) \,. \]
	One can also construct natural action maps \[\LB(\mathcal V,\mathcal W) \times X(\mathcal V) \longrightarrow X(\mathcal W) \] for another inner product space $\mathcal W$ of countably infinite dimension as explained in \cite[Con. 3.2]{sch19}. 
	In particular, $X(\R^\infty)$ inherits an action of $\LB(\R^\infty,\R^\infty)=\LI$.
\end{construction}
Now we can justify the definition of global equivalences between orthogonal spaces. 
\begin{proposition}[{\cite[Prop. 1.1.17]{sch18}}]
	Let $X$ and $Y$ be orthogonal spaces where all structure maps are closed embeddings. A morphism $f \colon X \to Y$ is a global equivalence of orthogonal spaces if and only if the induced map $f(\R^\infty) \colon X(\R^\infty) \to Y(\R^\infty)$ is a global equivalence of $\LI$-spaces.
\end{proposition}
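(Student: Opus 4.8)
The plan is to compare the two notions by routing both through the condition ``for every universal subgroup $G$ of $\LI$, the map $X(\R^\infty)^G \to Y(\R^\infty)^G$ is a weak equivalence'', which is literally the definition of $f(\R^\infty)$ being a global equivalence of $\LI$-spaces, and then matching this condition against the lifting criterion that defines a global equivalence of orthogonal spaces.

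First I would unravel $X(\R^\infty)^G$ for a universal subgroup $G \le \LI$. By \Cref{atRinf} we have $X(\R^\infty) = \colim_{V \in s(\R^\infty)} X(V)$, a filtered colimit along the structure maps, which are closed embeddings by hypothesis. Since $\R^\infty$ is a complete $G$-universe it splits as a countable direct sum of finite-dimensional $G$-subrepresentations, so every finite-dimensional subspace lies in a finite-dimensional $G$-subrepresentation; hence the poset of finite-dimensional $G$-subrepresentations of $\R^\infty$ is cofinal in $s(\R^\infty)$ and the colimit may be computed over it. As $G$ is a compact Lie group, $(-)^G$ commutes with filtered colimits along closed embeddings, so $X(\R^\infty)^G \cong \colim_V X(V)^G$ naturally in $X$. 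Thus $f(\R^\infty)^G$ is a weak equivalence if and only if $\colim_V X(V)^G \to \colim_V Y(V)^G$ is one.

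Next I would prove that this colimit condition, imposed for all universal $G$, is equivalent to the lifting criterion. Assuming the lifting criterion, a map $\alpha \colon S^{n-1} \to X(\R^\infty)^G$ together with a compatible $\beta \colon D^n \to Y(\R^\infty)^G$ factors through a finite stage $V \subseteq \R^\infty$ by compactness and the closed-embedding property; the criterion then yields an abstract finite-dimensional $W$, a $G$-equivariant isometric embedding $\varphi \colon V \to W$, and a lift $l \colon D^n \to X(W)^G$ with the expected commutativities. Using completeness of the $G$-universe $\R^\infty$, choose a $G$-equivariant isometric embedding $\psi \colon W \to \R^\infty$; since the space of $G$-equivariant linear isometric embeddings of a finite-dimensional $G$-representation into a complete $G$-universe is path-connected, the maps $X(V)^G \to X(\R^\infty)^G$ induced by $\psi \circ \varphi$ and by the inclusion $V \subseteq \R^\infty$ are homotopic, and this homotopy transports $l$ into $X(\R^\infty)^G = \colim_W X(W)^G$ to give the required surjectivity and injectivity on homotopy groups, with the lower triangle still commuting up to homotopy relative $S^{n-1}$. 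Conversely, given a square over an abstract finite-dimensional $G$-representation $V$, embed $V$ into $\R^\infty$ so that $G$ becomes a universal subgroup by \Cref{allareuniversal}; the assumed weak equivalence $X(\R^\infty)^G \to Y(\R^\infty)^G$ produces a lift into $X(\R^\infty)^G$, which by compactness factors through a finite stage $V \subseteq W \subseteq \R^\infty$, and this is exactly the data demanded by the criterion. Chaining ``lifting criterion'' $\Leftrightarrow$ ``for all universal $G$, $\colim_V X(V)^G \to \colim_V Y(V)^G$ is a weak equivalence'' $\Leftrightarrow$ ``for all universal $G$, $f(\R^\infty)^G$ is a weak equivalence'' $\Leftrightarrow$ ``$f(\R^\infty)$ is a global equivalence of $\LI$-spaces'' completes the argument.

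The step I expect to be the main obstacle is the \emph{rigidification} inside the first implication: replacing the abstract target $W$ produced by the criterion by an honest finite-dimensional subspace of $\R^\infty$ containing $V$, compatibly with the maps already in play. This relies on the fact that the space of $G$-equivariant linear isometric embeddings of a fixed $G$-representation into a complete $G$-universe is contractible --- the same circle of ideas that makes $\LI$ a universal space for the family of graph subgroups, as used in \Cref{fexLhom} --- so that the two induced maps on $X(-)^G$ agree up to homotopy; carrying the homotopy relative $S^{n-1}$ through this transport, and checking the fixed-point/filtered-colimit interchange (which is precisely where the closed-embedding hypothesis is consumed), are the technical points that need care.
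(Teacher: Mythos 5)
The paper does not prove this statement itself; it cites \cite[Prop. 1.1.17]{sch18}, and your argument is essentially the proof given there: identify $X(\R^\infty)^G$ with the filtered colimit $\colim_V X(V)^G$ over finite-dimensional $G$-subrepresentations (using the closed-embedding hypothesis and cofinality), then translate the lifting criterion back and forth via compactness of $S^{n-1}$ and $D^n$ and the contractibility (in particular path-connectedness) of the space of $G$-equivariant isometric embeddings into a complete $G$-universe. The technical points you single out --- rigidifying the abstract target $W$ into an actual subspace of $\R^\infty$ while keeping the relevant homotopies relative $S^{n-1}$, and the interchange of fixed points with the colimit, which is exactly where the closed-embedding hypothesis is consumed --- are precisely the ones handled in Schwede's proof, so the plan is sound.
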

Evaluation of an orthogonal space at $\R^\infty$ as described in \Cref{atRinf} is closely related to the left Quillen functor~$L$ which realizes the following Quillen equivalence.
\begin{theorem}[{\cite[Thm. 3.9]{sch19}}] \label{LSOS}
	There is a Quillen equivalence \[\LLO \colon \Spc \Leftrightarrows \LI\dash\Top \noloc \RLO \] between the global model structure on $\LI\dash\Top$ and the positive global model structure on~$\Spc$.
\end{theorem}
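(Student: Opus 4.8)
The plan is to prove this in the usual two stages: first exhibit $(\LLO,\RLO)$ as a Quillen adjunction, then upgrade it to a Quillen equivalence by comparing $\LLO$ with the evaluation-at-$\R^\infty$ functor of \Cref{atRinf}.

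First I would pin down the two functors. The right adjoint is \[\RLO(Y)(V)=\map^{\LI}\bigl(\LB(V,\R^\infty),Y\bigr),\] where $\LI=\LB(\R^\infty,\R^\infty)$ acts on $\LB(V,\R^\infty)$ by post-composition; since $V\mapsto\LB(V,\R^\infty)$ is contravariant, this is indeed a continuous orthogonal space. Its left adjoint $\LLO$ is the essentially unique colimit-preserving functor with $\LLO\bigl(\LB(V,-)\bigr)\cong\LB(V,\R^\infty)$ on free orthogonal spaces, concretely the coend $\LLO(X)=\int^{V\in\LB}X(V)\times\LB(V,\R^\infty)$. Composing an isometry $V\hookrightarrow\R^\infty$ with a point of $X(V)$ gives a natural map $\LLO(X)\to X(\R^\infty)$, and the first technical input is that this map is a global equivalence of $\LI$-spaces whenever $X$ is cofibrant (indeed flat): each $\LB(V,\R^\infty)$ is an $\LI$-equivariantly cofibrant space, and these spaces are ``cofinally contractible'' as $V$ exhausts $\R^\infty$, so the coend computes the homotopy colimit of $V\mapsto X(V)$, which is $X(\R^\infty)$.

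Second, for the Quillen adjunction I would check that $\LLO$ sends the generating (positive) cofibrations and generating (positive) acyclic cofibrations of $\Spc$ to cofibrations and acyclic cofibrations of $\LI\dash\Top$. The generating cofibrations are built from semifree orthogonal spaces $\LB_{G,V}$, with $V$ a faithful $G$-representation and, in the positive variant, $V^G\neq 0$; by the co-Yoneda lemma $\LLO(\LB_{G,V})\cong\LB(V,\R^\infty)/G$, and since $V$ is faithful the group $G$ acts freely on $\LB(V,\R^\infty)$, so this quotient is cofibrant in the global model structure on $\LI\dash\Top$. The positivity condition is exactly what makes the generating cells match on the two sides. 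For the acyclic cofibrations one unwinds the left-Bousfield-localization description of the global model structure on $\Spc$ and checks, with the help of the comparison $\LLO X\simeq X(\R^\infty)$, that the relevant maps land among the global equivalences of $\LI$-spaces; being cofibrations, they are then acyclic cofibrations. (Alternatively one may verify directly that $\RLO$ preserves fibrations and acyclic fibrations via their fixed-point descriptions.)

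Finally, for the equivalence I would use the standard criterion: it suffices that $\RLO$ reflects weak equivalences between fibrant objects and that for every cofibrant $X$ the derived unit $X\to\RLO\bigl((\LLO X)^{\mathrm{fib}}\bigr)$ is a global equivalence of orthogonal spaces. For the unit, combining $\LLO X\simeq X(\R^\infty)$ with the characterization of global equivalences of orthogonal spaces via the associated $\LI$-space (\cite[Prop.~1.1.17]{sch18}, quoted above) reduces the claim to showing that $X(V)\to\map^{\LI}\bigl(\LB(V,\R^\infty),X(\R^\infty)\bigr)$ becomes a $G$-equivalence after enlarging $V$; a cell-induction argument reduces this further to $X$ a semifree orthogonal space, where both sides are explicitly computable. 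For the reflection statement, the point is that $\RLO(Y)(V)^H$ stabilizes, as $V$ runs through a complete $G$-universe, to a space weakly equivalent to an appropriate fixed-point space of $Y$, so that $\RLO g$ being a global equivalence forces $g\colon Y\to Y'$ to be one. The main obstacle --- and the part that really needs the equivariant homotopy theory of the spaces $\LB(V,\R^\infty)$ from \cite{sch18,sch19} --- is precisely this last uniformity: controlling the homotopy types of the mapping $\LI$-spaces $\map^{\LI}\bigl(\LB(V,\R^\infty),-\bigr)$ and of their fixed points compatibly in $V$, i.e. making precise the sense in which $\LB(V,\R^\infty)$ ``converges'' to an object of the orbit category $\Ogl$ as $V\to\R^\infty$. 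With those inputs in hand the remainder is bookkeeping with coends and cell complexes.
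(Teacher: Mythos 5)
This theorem is not proved in the paper at all: it is imported verbatim from Schwede, \emph{Orbispaces, orthogonal spaces, and the universal compact Lie group}, Thm.~3.9, so there is no internal argument to compare against. Your outline does match the architecture of the cited proof: the right adjoint $\RLO(Y)(V)=\map^{\LI}(\LB(V,\R^\infty),Y)$, the coend description of $\LLO$, and above all the comparison map $\LLO(X)\to X(\R^\infty)$ being a global equivalence on cofibrant objects --- the latter is exactly \Cref{WEbySch} of this paper (Schwede's Prop.~3.7), and combining it with \cite[Prop.~1.1.17]{sch18} is indeed how the derived unit and counit are controlled.

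That said, as a proof your proposal has two real gaps. First, the Quillen-adjunction step asserts that $\LLO(\LB_{G,V})\cong\LB(V,\R^\infty)/G$ is cofibrant in the global model structure on $\LI\dash\Top$ ``since $G$ acts freely''; freeness of a compact Lie group action does not by itself produce a cofibrant quotient --- you would need an actual $(G\times\LI)$-equivariant cell structure on $\LB(V,\R^\infty)$, or else carry out the alternative you mention in parentheses and verify directly that $\RLO$ preserves (acyclic) fibrations via the fixed-point description $\RLO(Y)(V)^G=\map^{G\times\LI}(\LB(V,\R^\infty),Y)$. Second, and more importantly, you explicitly defer the entire equivariant-homotopy-theoretic content --- that $\LB(V,\R^\infty)$ is a universal space for graph subgroups, so that $\RLO(Y)(V)^G$ stabilizes to the correct fixed points as $V$ exhausts a complete universe, and that the derived unit is an equivalence on semifree generators. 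Those are precisely the statements the citation to \cite[Thm.~3.9]{sch19} is carrying; acknowledging them as ``the main obstacle'' and then declaring the rest bookkeeping leaves the theorem unproved. The outline is a faithful roadmap of Schwede's argument, not a substitute for it.
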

\begin{lemma}[{\cite[Prop. 3.7]{sch19}}] \label{WEbySch}
	There is a natural morphism of $\LI$-spaces \[\xi(X) \colon \LLO(X) \longrightarrow X(\R^\infty) \] which is a global equivalence if $X$ is cofibrant in the global model structure on orthogonal spaces.
\end{lemma}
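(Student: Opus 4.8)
The plan is to view $\xi(X)$ as the canonical comparison between the homotopically robust construction $\LLO(X)$ and the point-set colimit $X(\R^\infty)=\colim_{V\in s(\R^\infty)}X(V)$ of \Cref{atRinf}, and to reduce the claim to the generating cofibrations of the global model structure on $\Spc$. Concretely, a finite-dimensional inner product space $V$ and a linear isometric embedding $\varphi\colon V\hookrightarrow\R^\infty$ give a map $X(V)\to X(\varphi(V))\hookrightarrow X(\R^\infty)$; letting $\varphi$ run through $\LB(V,\R^\infty)$ and then $V$ run through $\LB$ assembles these into $\xi(X)$, with $\LI$ acting on the target by post-composition, and naturality in $X$ is built in. Since $\LLO$ is a left adjoint (\Cref{LSOS}) and $X\mapsto X(\R^\infty)$ preserves all colimits, $\xi$ is a natural transformation of colimit-preserving functors $\Spc\to\LI\dash\Top$, so it is enough to check that it is a global equivalence on the cells building the cofibrant objects and to run the usual cell induction.

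First I would set up that reduction. A cofibrant orthogonal space is a retract of a cell complex $\colim_\lambda X^\lambda$ with $X^0=\emptyset$ and each $X^{\lambda+1}$ a pushout of $X^\lambda$ along a coproduct of generating cofibrations $\LB_{G,V}\times(S^{n-1}\hookrightarrow D^n)$, with $G$ ranging over representatives of the compact Lie groups (the universal subgroups of $\LI$, by \Cref{allareuniversal}) and $V$ over $G$-representations. Both $\LLO$ and $X\mapsto X(\R^\infty)$ send global cofibrations of orthogonal spaces to Hurewicz cofibrations of $\LI$-spaces: for the evaluation because a cofibrant orthogonal space has closed-embedding structure maps, so its value at $\R^\infty$ is a filtered colimit along closed embeddings and hence a homotopy colimit; for $\LLO$ because it preserves colimits and products with spaces and $S^{n-1}\hookrightarrow D^n$ is a Hurewicz cofibration. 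Global equivalences of $\LI$-spaces are preserved by cobase change along Hurewicz cofibrations, by transfinite composition along them, and by retracts (the gluing lemma for the global model structure on $\LI\dash\Top$). Since both functors commute with these colimits and with $(-)\times B$ for a space $B$, and $\xi$ is compatible with all the structure maps, the induction reduces the statement to $X=\LB_{G,V}$.

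The base case is the main obstacle. Here $X(\R^\infty)=\LB(V,\R^\infty)/G$, with $\LI$ acting on $\R^\infty$ by post-composition and $G$ acting on $V$ through its representation; this representation factors faithfully through the quotient $G/N$ by its kernel $N$, so the residual action of $G$ on each nonempty space of isometric embeddings $V\hookrightarrow W$ is free. For the free generator $\LB_{\mathbf{e},V}=\LB(V,-)$ one identifies $\xi(\LB(V,-))$ with the augmentation of the simplicial resolution computing $\LLO(\LB(V,-))$ onto its coend $\int^{W}\LB(W,\R^\infty)\times\LB(V,W)\cong\LB(V,\R^\infty)$; because $\LB(V,-)$ is a free $\LB$-diagram this augmentation carries an extra degeneracy built from $\mathrm{id}_V$, hence is a homotopy equivalence whose contracting homotopy is natural and therefore $(\LI\times G)$-equivariant. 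Passing to $G$-orbits — equivalently to the free $G/N$-orbits — then shows that $\xi(\LB_{G,V})$ is again a homotopy equivalence of $\LI$-spaces, in particular a global equivalence. As a cross-check, for every universal subgroup $K\le\LI$ one has $(\LB(V,\R^\infty)/G)^{K}\cong\coprod_{[\alpha\colon K\to G]}\LB(V,\R^\infty)^{\alpha}/C_G(\alpha(K))$, and each space $\LB(V,\R^\infty)^{\alpha}$ of $K$-equivariant isometric embeddings of $(V,\alpha)$ into $\R^\infty$ is contractible because $\R^\infty$ is a complete $K$-universe, which recovers the global classifying-space homotopy type that $\LLO(\LB_{G,V})$ must carry. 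The delicate points are precisely this extra-degeneracy argument on the free generator and the closed-embedding bookkeeping behind the reduction; the complete argument is \cite[Prop. 3.7]{sch19}.
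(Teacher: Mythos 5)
The paper offers no proof of this lemma: it is imported wholesale from \cite[Prop.~3.7]{sch19}, so the only in-paper ``proof'' is the citation, and your write-up should be judged as a reconstruction of Schwede's argument. Your reduction step is the right skeleton and is essentially how statements of this type are proved: a cofibrant orthogonal space is a retract of a cell complex built from the generating cofibrations $\LB_{G,V}\times(S^{n-1}\hookrightarrow D^n)$; both $\LLO$ and $(-)(\R^\infty)$ preserve the relevant colimits and send these cells to maps that are closed embeddings and h-cofibrations on all fixed points of universal subgroups; and global equivalences of $\LI$-spaces are closed under cobase change along such maps, sequential colimits along them, and retracts. This correctly reduces everything to $X=\LB_{G,V}$.

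The gap is the base case. The lemma concerns the specific left Quillen functor $\LLO$ of \Cref{LSOS}, whose definition appears neither in this paper nor in your argument; you substitute a guessed model, namely the realization of a bar resolution of the representable diagram $\LB(V,-)$ augmenting onto the coend $\int^{W}\LB(W,\R^\infty)\times\LB(V,W)\cong\LB(V,\R^\infty)$, and then invoke the extra degeneracy available for representables. That argument establishes the statement for \emph{that} derived model of evaluation at $\R^\infty$, not for the functor $\LLO$ actually constructed in \cite{sch19} and used elsewhere in the paper (e.g.\ in \Cref{orbfldspc}, where the adjoint of $\xi$ is needed). Note also that if $\LLO(Y)$ were literally the coend $\int^{W}\LB(W,\R^\infty)\times Y(W)$, then $\xi(Y)$ would be a homeomorphism for \emph{every} $Y$ by cofinality of $s(\R^\infty)$ in $\LB\downarrow\R^\infty$, and the cofibrancy hypothesis would be vacuous -- so the substance of the lemma lies precisely in how $\LLO$ differs from such a point-set construction. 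The fixed-point computation you relegate to a ``cross-check,'' namely $(\LB(V,\R^\infty)/G)^{K}\cong\coprod_{[\alpha]}\LB(V,\R^\infty)^{\alpha}/C_G(\alpha(K))$ with each $\LB(V,\R^\infty)^{\alpha}$ contractible because $\R^\infty$ is a complete $K$-universe, correctly identifies the target as a global classifying space of $G$ and is in fact the substantive half of the real base case; but identifying the source $\LLO(\LB_{G,V})$ requires the actual construction of $\LLO$, and that identification is exactly the content of \cite[Prop.~3.7]{sch19}. As written, your text is a plausible outline of the strategy rather than a proof.
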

Evaluation of an orthogonal space at a faithful $G$-representation admits the following left adjoint functor which relates the categories $G\dash\Top$ and $\Spc$.
\begin{definition}[Semifree orthogonal space]
	Let $V$ be a finite-dimensional faithful $G$-re\-pre\-sen\-ta\-tion. We define the \defn{semifree orthogonal space} functor \[\LB_{G,V}\colon G\dash\Top \longrightarrow \Spc \] as follows: Let $A$ be a $G$-space. For an inner product space $W$, we set \[(\LB_{G,V}A)(W)=\LB(V,W) \times_G A\] where $G$ acts on $\LB(V,W)$ be pre-composing the action on $V$. The structure maps act by post-composition. %An element $\varphi \in \LB(W,U)$ induces a map $\LB(V,W) \times_G A \to \LB(V,U) \times_G A$ by post-composition on the first factor. 
	%A map $f \colon A \to B$ between two $G$-spaces induces maps $\LB(V,W)\times_G A \to \LB(V,W) \times_G B$ which are compatible with the structure maps defined above.
\end{definition}
\begin{comment}
\begin{proposition}[{\cite[Remark 1.2.24]{sch18}}] \label{adjevsemifree}
Let $V$ be a finite-dimensional faithful $G$-representation. Let \[\ev_{V} \colon \Spc \longrightarrow G\dash\Top\] denote the evaluation functor on an inner product space $V$. Here $G$ acts on $X(V)$ via the structure maps induced by the action maps on $V$.
Then \[\LB_{G,V} \colon G\dash\Top \Leftrightarrows \Spc \noloc \ev_V \] is a Quillen adjunction between the model structure on $G\dash\Top$ desribed in \Cref{Gmod} and the global model structure on $\Spc.$
\end{proposition}
\end{comment}
Now we come to the definition of orthogonal spectra. %The category of orthogonal spectra is also defined as a topological functor category indexed over inner product spaces but we consider more maps between inner product spaces. The new maps give rise to suspension maps which will be used to define stable homotopy groups.
\begin{construction}[Category $\OC$]
	Let $V$ and $W$ be finite-dimensional inner product spaces. We define \[\xi(V,W) = \defset{(w,\varphi) \in W \times \LB(V,W)}{w \perp \im(\varphi)}  \,. \] Together with the projection onto the second factor $\xi(V,W) \to \LB(V,W)$, this becomes a vector bundle. Let $\OC(V,W)$ be the Thom space of this bundle, that is the one-point compactification of the total space. Let $U$ be another inner product space. The map \begin{align*}
	\xi(V,W) \times \xi(U,V) & \longrightarrow \xi(U,W) \,,\\
	((w,\varphi),(v,\psi)) & \longmapsto (w+\varphi(v),\varphi\psi)
	\end{align*}
	extends to a map \[\circ \colon \OC(V,W) \times \OC(U,V) \longrightarrow \OC(U,W) \,. \]
	Hence we can define the $\Top_\ast$-enriched category $\OC$ with objects all finite-dimensional inner product spaces and morphism space $\OC(V,W)$ between two inner product spaces $V$ and~$W$. %The identity of $V$ is given by $(0,\id_V)$.
\end{construction}
\begin{definition}[Orthogonal $G$-spectrum] Let $G$ be a compact Lie group. 
	An \defn{orthogonal $G$-spectrum} is a based continuous functor \[ \OC \longrightarrow G\dash\Top_\ast \,. \]
	
	A \defn{morphism} of orthogonal $G$-spectra is a based continuous natural transformation of such functors.
	We write $G\dash\Sp$ for the category of orthogonal $G$-spectra. 
	
	Let $e$ denote the trivial group. An \defn{orthogonal spectrum} is an orthogonal $e$-spectrum. We write $\Sp$ for $\tg\dash\Sp$.
\end{definition}
\begin{remark} \label{GonXV}
	Let $X$ be an orthogonal $G$-spectrum and let $V$ be a representation of a compact Lie group $G$. The space $X(V)$ admits an action of the group $G \times G$ via the given action on the space itself and the action which is induced by the action on $V$. We usually consider~$X(V)$ as a $G$-space by the diagonal action.
	Note that the $G$-action on $X(V)$ may be non-trivial even if the orthogonal $G$-spectrum takes values in trivial $G$-spaces.
\end{remark}
\begin{comment}
\begin{notation}
Let $S^V$ denote the one-point-compactification of an inner product space~$V$.
\end{notation}
\begin{remark}
Given an orthogonal spectrum $X$, we have the following structure maps. 
\begin{itemize}
\item For all inner product spaces $V,W$, we have a \defn{suspension map} \[\sigma_{V,W} \colon S^V \wedge X(W) \longrightarrow X(V \oplus W) \ \] which is adjoint to the map \[S^V \longhookrightarrow \OC(W,V\oplus W) \xrightarrow{\,X(W,V \oplus W)\,} \map_*(X(W),X(V \oplus W)) \]
where $S^V$ is embedded into $\OC(V,V \oplus W)$ by the induced map on one-point-compactifications of the map \begin{align*}
V & \longrightarrow \xi(W,V\oplus W) \,,\\
v & \longmapsto (\incl_W,(v,0))\,.
\end{align*}
\item For all inner product spaces $V$ and $W$ with $\dim V = \dim W$ we have an \defn{action map} \[\rho_{V,W} \colon \LB(V,W)_+ \wedge X(V) \longrightarrow X(W) \]
which is adjoint to \[ \LB(V,W)_+ \longhookrightarrow \OC(V,W) \xrightarrow{X(V,W)} \map_\ast(X(V),X(W)) \] where the inclusion of $\LB(V,W)_+$ into $\OC(V,W)$ is given by the one-point-com\-pac\-ti\-fi\-cation of the map \begin{align*}
\LB(V,W) & \longrightarrow \xi(V,W)\,, \\
\varphi & \longmapsto (\varphi,0)\,.
\end{align*}
\end{itemize}
One can recover the original orthogonal spectrum $X$ from the suspension maps and the $\OG(V)$-action on $X(V)$. A more detailed account can be found in \cite[Section 3.1]{sch18}.
\end{remark}
\end{comment}
\begin{definition}[Homotopy groups]%\todo{delete?}
	Let $X$ be an orthogonal $G$-spectrum and let $H$ be a closed subgroup of $G$. Let $\mathcal U_H$ be a complete $H$-universe. Let $s(\mathcal U_H)$ be the poset of all finite-dimensional subrepresentations of $\mathcal U_H$. 
	Let $k$ be an non-negative integer. We define the \defn{$k$-th homotopy group} of $X$ as \[\pi_k^{H}(X) = \colim\limits_{V \in s(\mathcal U_H)} \left[S^{V \oplus \R^k},X(V)\right]^H_\ast \] and similarly the \defn{$(-k)$-th homotopy group} \[\pi_{-k}^{H}(X) = \colim\limits_{V \in s(\mathcal U_H)} \left[S^V,X(V \oplus \R^{k})\right]^H_\ast \,. \] 
	The maps in the colimit system are defined by using the suspension maps, see \cite[Sec, 3.1]{sch18} for a complete definition.
\end{definition}
\begin{definition}[$\underline \pi_\ast$-isomorphism]
	A morphism $f \colon X \to Y$ of orthogonal $G$-spectra is called a \defn{$\underline\pi_\ast$-isomorphism} if $\pi_k^H(f) \colon \pi_k^H(X) \to \pi_k^H(Y)$ is an isomorphism for all closed subgroups~$H$ of $G$ and all integers $k$. 
\end{definition}
\begin{definition}[Suspension spectrum] \label{sustopsp} We define the \defn{suspension} functor \[\Sigma_{G,+}^\infty \colon G\dash\Top \longrightarrow G\dash\Sp \] as follows: For $A$ a $G$-space and $V$ an inner product space, let $(\Sigma_{G,+}^\infty A)(V)=S^V \wedge A_+$ where the action of $G$ is given by the action on $A$ and the structure maps act on $S^{V}$. A $G$-equivariant map $f \colon A \to B$ induces maps $S^V \wedge A \to S^V \wedge B $ which commute with the structure maps.
	%We write $\Sigma^\infty_{G,+}\colon G\dash\Top \to G\dash\Sp$ for the composition $\Sigma_G^\infty \circ (-)_+$. Moreover, we write~$\Sigma^\infty$ for $\Sigma_{\{e\}}^\infty$ and $\Sigma^\infty_+$ for $\Sigma^\infty_{\{e\},+}$.
	We write $\Sigma_+^\infty$ for $\Sigma_{e,+}^\infty$.
\end{definition}
\begin{definition}[Suspension spectrum] \label{suspensionspectrum}
	We define the \defn{suspension spectrum} functor \[\Sigma^\infty_+ \colon \Spc \longrightarrow \Sp \] with value at an orthogonal space $X$ and an inner product space $V$ given by \[(\Sigma^\infty_+ X)(V)=S^V \wedge X(V)_+ \,.\] %The $\OC(V)$ action on $(\Sigma_+^\infty X)(V)$ is given by the diagonal action. 
	%The structure map $\sigma_{V,W}$ is defined to be the composition \[S^V \wedge (S^W \wedge X(W)_+) \cong S^{V \oplus W} \wedge X(W) \xrightarrow{\,\id \wedge Y(\incl_1)\,} S^{V \oplus W} \wedge Y(V \oplus W)_+   \]
	A linear isometry acts via the diagonal action on both factors while the suspension coordinate only affects the first factor. A full definition can be found in \cite[Con. 4.1.7]{sch18}. 
\end{definition}
This functor is left Quillen with respect to the global model structures, see \cite[Thm. 4.3.17(v)]{sch18}, and takes global equivalences of orthogonal spaces to global equivalences of orthogonal spectra, see \cite[Cor. 4.1.9]{sch18}. For an orthogonal space with constant value $A$, the functor $\Sigma_+^\infty$ coincides with the one from \Cref{sustopsp}.
\begin{comment}
\begin{proposition}[{\cite[Theorem 4.3.17]{sch18}}] \label{adjsigmaomega}
The functor pair  \[\Omega^\bullet \colon \Sp \Leftrightarrows \Spc \noloc \Sigma_+^\infty \] forms a Quillen pair with respect to the global model structures on orthogonal spaces and on orthogonal spectra, respectively. 
\end{proposition} 
\begin{definition}[homotopic]
Let $X$ and $Y$ be orthogonal $G$-spectra. Two maps $f,g \colon X \to Y$ are said to be \defn{homotopic} if there is a commutative diagram of the form \[\begin{tikzcd}
X \wedge \{0\}+ \cong X \ar[dr,"f"] \ar[d,"\id \wedge \incl" swap]\\
X \wedge [0,1]_+ \ar[r,"H"] & Y\\
X \wedge \{1\}_* \cong X \ar[ur,"g" swap] \ar[u,"\id \wedge \incl"]
\end{tikzcd} \] In this case $H$ is called a \defn{homotopy} between $f$ and $g$.
\end{definition}
\end{comment}
\begin{definition}[Homotopy categories]
	Let $G\dash\SH$ denote the homotopy category of~$G\dash\Sp$ with respect to the $\underline\pi_\ast$-isomorphisms. We also simply write $\SH$ instead of~$\tg\dash\SH$.
	Let $\GH$ denote the homotopy category of $\Sp$ with respect to the global equivalences.
	
	These categories admit the structure of a triangulated category, the suspension functor is induced by the smash functor $(- \wedge S^1)$, compare \cite[Sec. 4.4]{sch18}.
\end{definition}
The category $G\dash\SH$ models $G$-equivariant cohomology theories. 
\begin{definition}
	Let $E$ be an orthogonal $G$-spectrum and $A$ be a $G$-space. We define the \defn{$k$-th $G$-equivariant cohomology group} represented by $E$ of $A$ as \[E^k(A)=G\dash\SH(\Sigma_{G,+}^\infty A,E[k]) \,. \]
\end{definition}
%\begin{construction}[Forgetful functor]
The trivial action functor \[(-)_G \colon \Sp \longrightarrow G\dash\Sp\] sends global equivalences to $\underline \pi_\ast$-isomorphisms %This induces the \defn{forgetful functor} \[\UF_G \colon \GH \longrightarrow G\dash\SH \,. \]
and provides a connection between global homotopy theory which is studied in $\GH$ and $G$-equivariant homotopy theory which is studied in $G\dash\SH$.
%\end{construction}
\begin{comment}
The functor $(-)_G$ is not a right Quillen functor. However, we can replace $(-)_G$ by a right Quillen functor such that the total derived functors are naturally isomorphic. 
\begin{lemma} [{\cite[Remark 4.5.25]{sch18}}] \label{replaceforget}
Let $V$ be a finite-dimensional faithful $G$-re\-pre\-sen\-ta\-tion. There is a functor \[\Omega^V \sh^V \colon \Sp \longrightarrow G\dash\Sp \] such that \[ \left(\Omega^V\sh^V(X)\right)(W) = \map_\ast(S^V,X(W \oplus V)) \] for an orthogonal spectrum $X$ and an inner product space $W$ where $G$ acts via conjugation. This functor also takes global equivalences to $\underline \pi_\ast$-isomorphisms. This functor is right Quillen and the total derived functor is naturally equivalent to $\UF_G$.
\end{lemma}
\end{comment}
\begin{proposition} \label{cohom}
	Let $A$ be a $G$-CW complex and let $E$ be an orthogonal spectrum. There is an isomorphism \[(E_G)^k(A) \cong \GH\left(\Sigma_+^\infty \LB_{G,V} A,E[k]\right) \] which is natural in $A$. 
\end{proposition}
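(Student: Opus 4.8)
The plan is to identify both sides of the claimed bijection with the set $[A, Z]_{G\dash\Top}$ of $G$-equivariant homotopy classes of maps from $A$ into a single $G$-space $Z$. That $G$-space is $\map_\ast(S^V, E_f[k](V))$, where $E_f$ is a global fibrant replacement of $E$ in $\Sp$, where $V$ is the given faithful $G$-representation, and where $G$ acts by restricting along $G\hookrightarrow\OG(V)$ the conjugation action of $\OG(V)$ on $\map_\ast(S^V, E_f[k](V))$ — equivalently, diagonally on the factor $S^V$ and on the factor $E_f[k](V)$, as in \Cref{GonXV}. Throughout, the shift $[k]$ plays only a passive bookkeeping role.

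For the right-hand side I would unwind the derived mapping set along two Quillen adjunctions. Since $A$ is a $G$-CW complex it is cofibrant in $G\dash\Top$, so $\LB_{G,V}A$ is cofibrant in $\Spc$ (the functor $\LB_{G,V}$ is left Quillen, \cite[Rem.~1.2.24]{sch18}) and hence $\Sigma^\infty_+\LB_{G,V}A$ is cofibrant in $\Sp$. Writing $\Omega^\bullet$ for the orthogonal loop space functor right adjoint to $\Sigma^\infty_+$ (see \cite[Con.~4.1.6]{sch18}) and using that $\Omega^\bullet$ and $\ev_V$ are right Quillen, hence preserve the relevant fibrant objects, one gets natural bijections
\begin{align*}
\GH\big(\Sigma^\infty_+\LB_{G,V}A,\ E[k]\big)
&\cong \big[\Sigma^\infty_+\LB_{G,V}A,\ E_f[k]\big]_{\Sp} \\
&\cong \big[\LB_{G,V}A,\ \Omega^\bullet(E_f[k])\big]_{\Spc} \\
&\cong \big[A,\ \ev_V\Omega^\bullet(E_f[k])\big]_{G\dash\Top}
= \big[A,\ Z\big]_{G\dash\Top} ,
\end{align*}
the last equality being the identification $\ev_V\Omega^\bullet(E_f[k]) = \map_\ast(S^V, E_f[k](V))$ with the $G$-action described above.

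The left-hand side is where the real content lies. I would show that the $G$-spaces $\map_\ast(S^V, E_f[n](V))$, as $n$ varies, are the spaces of a genuine $\Omega$-$G$-spectrum $\underline E$ with $\underline E \simeq E_G$ in $G\dash\SH$. The key input is that $E_f$ is globally fibrant and that, for every closed subgroup $H\le G$, the restriction $V|_H$ is again a faithful $H$-representation, so that the structure maps $E_f(V)\to\Omega^W E_f(V\oplus W)$ are $H$-equivalences for all $H$-representations $W$ and $\pi_j^H(E_f)$ may be computed at $V$ itself (cf.\ \cite{sch18}); consequently $\pi_j\big(Z^H\big) = \big[S^{V|_H\oplus\R^j},\ E_f[k](V)\big]^H = \pi_j^H(E_f[k]) = \pi_j^H(E[k])$ for all $j$, and the maps $\map_\ast(S^V, E_f[n](V)) \to \Omega\map_\ast(S^V, E_f[n+1](V))$ induced by the orthogonal spectrum structure of $E_f$ are $G$-weak equivalences. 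Given this, $[A, Z]_{G\dash\Top} = G\dash\SH(\Sigma^\infty_{G,+}A,\ \underline E[k]) = G\dash\SH(\Sigma^\infty_{G,+}A,\ E_G[k]) = (E_G)^k(A)$ for the $G$-CW complex $A$. Combining with the previous display yields the asserted bijection, and naturality in $A$ is automatic since every functor and adjunction isomorphism used above is.

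The main obstacle is precisely this passage between the global model structure on $\Sp$ and genuine $G$-equivariant stable homotopy theory. One cannot simply restrict a global fibrant replacement: the underlying orthogonal $G$-spectrum $(E_f)_G$ is generally \emph{not} stably fibrant, since its structure map at the trivial representation need not be a $G$-equivalence. What rescues the argument is the faithfulness of $V$ — and of each restriction $V|_H$ — which is exactly the condition under which evaluating the orthogonal space $\Omega^\bullet(E_f)$ at $V$ produces the genuine equivariant infinite loop space of $E_G$; establishing this, and checking that the $G$-action supplied by \Cref{GonXV} on $E_f[k](V)$ agrees with the conjugation action that appears on the orthogonal-space side, is the crux. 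Everything else is formal manipulation with the Quillen adjunctions recalled in \Cref{SGHB} together with the hypothesis that $A$ be a $G$-CW complex.
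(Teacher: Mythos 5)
Your argument is correct in substance and, despite appearances, follows the same route as the paper: the paper's entire proof is a citation of \cite[Rem.~4.5.25]{sch18}, which asserts that the trivial action functor $(-)_G$ has a derived left adjoint sending $\Sigma^\infty_{G,+}A$ to $\Sigma^\infty_+\LB_{G,V}A$, and that remark is itself assembled from exactly the Quillen adjunctions you unwind ($\Sigma^\infty_+\dashv\Omega^\bullet$, $\LB_{G,V}\dashv\ev_V$, $\Sigma^\infty_{G,+}\dashv\ev_0$, together with the functor $\Omega^V\sh^V\colon\Sp\to G\dash\Sp$ given by $W\mapsto\map_\ast(S^V,X(W\oplus V))$, whose zeroth space is your $Z$ and which satisfies $\ev_0\circ\Omega^V\sh^V=\ev_V\circ\Omega^\bullet$). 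So you are reconstituting the content of the cited remark rather than citing it; that is a legitimate and more self-contained version of the same proof.

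One step needs tightening. To pass from $[A,Z]_{G\dash\Top}=G\dash\SH(\Sigma^\infty_{G,+}A,\underline{E}[k])$ to $G\dash\SH(\Sigma^\infty_{G,+}A,E_G[k])$ you need $\underline{E}$ and $E_G[k]$ to be \emph{isomorphic objects of $G\dash\SH$}, and for that an abstract isomorphism of homotopy groups $\pi_j^H(\underline{E})\cong\pi_j^H(E_G[k])$ is not enough: you must exhibit a map (or zig-zag) of orthogonal $G$-spectra inducing it. The map is the canonical one
\[
(E_f[k])_G\longrightarrow\Omega^V\sh^V\bigl(E_f[k]\bigr)=\underline{E}
\]
adjoint to the structure maps $S^V\wedge E_f[k](W)\to E_f[k](W\oplus V)$; your fixed-point computation (faithfulness of $V|_H$ for every closed $H\le G$ plus global fibrancy of $E_f$, so that the colimit defining $\pi_j^H$ is already attained at $V$) is precisely the verification that this map is a $\underline\pi_\ast$-isomorphism, but it has to be said that the isomorphism is realized by this map. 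Relatedly, $\underline{E}$ should be taken to be the orthogonal $G$-spectrum $\Omega^V\sh^V(E_f[k])$ rather than the sequential spectrum of spaces $\map_\ast(S^V,E_f[n](V))$, since the identification $[A,\ev_0 Y]_{G\dash\Top}\cong G\dash\SH(\Sigma^\infty_{G,+}A,Y)$ requires $Y$ to be a stably fibrant orthogonal $G$-spectrum — which $\Omega^V\sh^V(E_f[k])$ is, because $\Omega^V\sh^V$ is right Quillen. With these two points made explicit the proof is complete.
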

\begin{proof}
	This is stated without a detailed proof in \cite{sch18}. It follows from the adjunctions described in \cite[Rem. 4.5.25]{sch18}: The trivial action functor admits a derived left adjoint and the value of this adjoint at $\Sigma_{G,+}^\infty A$ is $\Sigma_+^\infty \LB_{G,V}A$.  
\end{proof}
\begin{comment}
\begin{proof}[Proof of \Cref{cohom}.]
We use the that the derived functors of a Quillen pair also form a pair of adjoint functors. We think of $A$ as an element in $\ho(G\dash\Top)$ and write $\LD(-)$ and $\RD(-)$ for total left, respectively total right, derived functors. We compute\todo{delete $\LD$ in left side}
\begin{align*}
{}&\GH\left(\LD \left( \Sigma_+^\infty \LB_{G,V} \right) (A), E\right) \\
\cong{}& \ho(\Spc)\left(\LD\left( \LB_{G,V}\right) (A),\RD\left(\Omega^\bullet\right) (E) \right) & \text{(\Cref{adjsigmaomega})} \\
\cong{}& \ho(G\dash\Top)\left( A,\RD\left( \ev_V \Omega^\bullet\right) (E )\right) & \text{(\Cref{adjevsemifree})} \\
\cong{}& \ho(G\dash\Top) \left(A,\RD\left(\ev_0\Omega^V \sh^V\right) (E) \right) \\
\cong{}&G\dash\SH\left(\LD\left(\Sigma_{G,+}^\infty\right)(A),\RD\left( \Omega^V\sh^V \right)(E)\right) & \text{(\Cref{adjev0sigmaG})} \\
\cong{}& G\dash\SH\left(\LD\left(\Sigma_{G,+}^\infty\right) (A), \UF_G E \right) & \text{(\Cref{replaceforget})}
\end{align*}
where we made use of the following commuting diagram:
\[\begin{tikzcd}
\Sp \arrow{r}{\Omega^\bullet} \arrow[swap]{d}{\Omega^V\sh^V} & \Spc \arrow{d}{\ev_V} \\
G\dash\Sp \arrow{r}{\ev_0} & G\dash\Top
\end{tikzcd} \] 
Because $A$ is cofibrant in $G\dash\Top$, the object $\LD(\Sigma_+^\infty \LB_{G,V}) A$ may be identified with the image of~$\Sigma_+^\infty \LB_{G,V} A$ in $\GH$. An analogous statement holds for $\Sigma_{G,+}^\infty A$.
\end{proof}
\end{comment}
\subsection{The Homotopy Type of an Orbifold}\label{htorb}
In this section we define orbifold cohomology groups and discuss various properties and examples. 

The functor $\lieli$ from \Cref{lieli} associates an $\LI$-space to an orbifold. By first using a fibrant replacement in the global model structure on $\LI\dash\Top$ and then the right Quillen functor \[\RLO \colon \LI\dash\Top \longrightarrow \Spc\] from \Cref{WEbySch} we further obtain an orthogonal space associated to an orbifold. Afterwards, we use the homotopical left Quillen functor \[\Sigma_+^\infty \colon \Spc \longrightarrow \Sp\] from \Cref{suspensionspectrum} and finally obtain an orthogonal spectrum which is supposed to model the stable global homotopy type of the orbifold.
Let \[\Repl_{\fib} \colon \LI\dash\Top \to \LI\dash\Top\] be a fibrant replacement functor in $\LI\dash\Top$. The existence of a functorial replacement is proven when establishing the global model structure on $\LI\dash\Top$, see \cite[Thm. 1.20]{sch19}.
\begin{definition}[Orbifold cohomology]
	Let $\G$ be an orbifold and let $E$ be an orthogonal spectrum and let $k$ be an integer. We define the \defn{$k$-th orbifold cohomology group} represented by $E$ of $\G$ by \[E^k(\G)= \GH\left(\left(\Sigma_+^\infty\circ \RLO\circ\Repl_{\fib}\circ\lieli\right)(\G), E[k] \right) \,.\] This extends to a functor $E^k({-}) \colon \Orbfld^{\op} \to \Ab$ and morphisms between orthogonal spectra induce natural transformations between the associated orbifold cohomology theories. 
\end{definition}
\begin{proposition}
	Let $E$ be an orthogonal spectrum and let $k$ be an integer. The functor $E^k({-}) \colon \Orbfld^{\op} \to \Ab$ inverts essential equivalences of orbifolds. 
\end{proposition}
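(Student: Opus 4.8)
The plan is to show that the composite functor $\Sigma_+^\infty\circ\RLO\circ\Repl_{\fib}\circ\lieli\colon\Orbfld\to\Sp$ carries essential equivalences of orbifolds to global equivalences of orthogonal spectra; once that is established the proposition is immediate. Indeed, $\GH$ is by definition the localization of $\Sp$ at the global equivalences, so the image of an essential equivalence $f\colon\G\to\HB$ becomes an isomorphism in $\GH$, and applying the contravariant hom-functor $\GH(-,E[k])$ then yields an isomorphism of abelian groups $E^k(\HB)\to E^k(\G)$, i.e. $E^k(f)$ is an isomorphism.

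To run the composite, fix an essential equivalence $f\colon\G\to\HB$. First I would invoke \Cref{lielihomotopical}: $\lieli(f)$ is a global equivalence of $\LI$-spaces. Next I would note that $\Repl_{\fib}$ is homotopical — the naturality square for the transformation $\id\Rightarrow\Repl_{\fib}$ together with the two-out-of-three property forces $\Repl_{\fib}(\lieli(f))$ to again be a global equivalence — and that, by definition of a fibrant replacement functor, its source and target are fibrant in the global model structure on $\LI\dash\Top$. Since $\RLO$ is the right adjoint of the Quillen equivalence of \Cref{LSOS}, Ken Brown's lemma shows it preserves weak equivalences between fibrant objects, so $(\RLO\circ\Repl_{\fib}\circ\lieli)(f)$ is a global equivalence of orthogonal spaces. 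Finally, $\Sigma_+^\infty$ sends global equivalences of orthogonal spaces to global equivalences of orthogonal spectra by \cite[Cor. 4.1.9]{sch18}, as recorded right after \Cref{suspensionspectrum}. Stringing these four facts together gives the desired conclusion about the composite, and hence the proposition.

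I do not expect a serious obstacle — the content is purely formal — but the point that needs care is the placement of the fibrant replacement $\Repl_{\fib}$: the right adjoint $\RLO$ is only guaranteed to be homotopical on fibrant objects, so without $\Repl_{\fib}$ the argument would break, and it is precisely this that motivates its presence in the definition of orbifold cohomology. A minor bookkeeping point is that the Quillen equivalence of \Cref{LSOS} uses the \emph{positive} global model structure on $\Spc$, whose weak equivalences nevertheless coincide with the ordinary global equivalences of orthogonal spaces, so that the output of $\RLO\circ\Repl_{\fib}\circ\lieli$ on $f$ really is a global equivalence in the sense needed to apply the statement about $\Sigma_+^\infty$. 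The asserted functoriality of $E^k(-)$ on $\Orbfld^{\op}$ is inherited from the functoriality already recorded in its definition.
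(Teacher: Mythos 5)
Your proposal is correct and is essentially the paper's own argument, just spelled out in more detail: the paper's proof is the one-line observation that $\lieli$, $\RLO\circ\Repl_{\fib}$ and $\Sigma_+^\infty$ are all homotopical, which is exactly the chain of facts you verify (including the role of the fibrant replacement in making $\RLO$ applicable).
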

\begin{proof}
	The functor $\lieli$ is homotopical by \Cref{lielihomotopical} and so are $\RLO \circ \Repl_{\fib}$ and $\Sigma_+^\infty$.
\end{proof}
Let $\Orb^{\fin}$ denote the full subcategory of finite groups of $\Orb$. The category $\Top_{\Orb^{\fin}}$ models unstable global homotopy types for the family of finite groups, see \cite[Sec. 1.3]{gh07}. The forgetful functor $\Top_{\Orb} \to \Top_{\Orb^{\fin}}$ is right Quillen with adjoint $\Top_{\Orb^{\fin}} \to \Top_{\Orb}$ given by left Kan extension. Every morphism~$\B K \to \G$ from a universal subgroup $K$ of $\LI$ to an orbifold $\G$ factors through a stabilizer of $\G$ and hence through a finite group. The $\Orb$-space $\overline F(\G)$ hence is contained in the essential image of the left adjoint. Similarly, there are analogs of $\Top_{\Orb^{\prime}}$ and $\Top_{\Ogl}$ for the family of finite groups. There also are $\fin$-global model structures on $\LI\dash\Top$ and $\Spc$ (\cite[Rem. 3.11]{sch19}) such that the identity is a left Quillen functor from the $\fin$-global model structure to the global model structure. All established Quillen equivalenes expand to the $\fin$-relative versions and these Quillen equivalences commute with the forgetful functors from the universal to the $\fin$-relative versions. We conclude that the orthogonal space $\left(\RLO\circ \Repl_{\fib} \circ \lieli \right)(\G)$ is also left induced from the $\fin$-global model structure on $\Spc$, up to global equivalence. 
There is also a $\fin$-global model structure on $\Sp$ and the identity is a left Quillen functor from this $\fin$-global model structure to the global model structure on $\Sp$. 
The suspension spectrum functor $\Sigma_+^\infty\colon \Spc \to \Sp$ is left Quillen with respect to the respective $\fin$-global model structures, see \cite[Thm. 4.3.17(v)]{sch18}. The global homotopy type $(\Sigma_+^\infty \circ \RLO \circ \Repl_{\fib} \circ \lieli) (\G)$ hence is left induced from the family of finite groups in the sense of \cite[Thm. 4.5.1(i)]{sch18}. This in particular proves the following:
\begin{proposition} \label{fin}
	A $\fin$-global equivalence between two orthogonal spectra induces isomorphisms on orbifold cohomology groups for all orbifolds. 
\end{proposition}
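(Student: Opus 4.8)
Let $f\colon E \to E'$ be a $\fin$-global equivalence of orthogonal spectra, let $\G$ be an orbifold, and let $k$ be an integer; the task is to show that the induced map $f_\ast\colon E^k(\G) \to (E')^k(\G)$ is an isomorphism. The plan is to deduce this formally from the discussion immediately preceding the statement, where the orthogonal spectrum $X_\G := (\Sigma_+^\infty \circ \RLO \circ \Repl_{\fib} \circ \lieli)(\G)$ was shown to be \emph{left induced from the family of finite groups} in the sense of \cite[Thm.~4.5.1(i)]{sch18}; concretely, $X_\G$ is globally equivalent to an orthogonal spectrum that is cofibrant in the $\fin$-global model structure on $\Sp$. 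Since $E^k(\G) = \GH(X_\G, E[k])$ by definition, and a hom-set in the localization $\GH$ depends only on the global-equivalence class of $X_\G$, I would begin by replacing $X_\G$ by such a model, so that henceforth $X_\G$ is cofibrant for the $\fin$-global model structure.

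The engine of the argument is the Quillen adjunction whose left member is the identity functor from the $\fin$-global to the global model structure on $\Sp$, recalled in the excerpt; its right adjoint is the identity in the other direction, which preserves fibrations and fibrant objects. Write $\GH_\fin$ for the homotopy category of the $\fin$-global model structure. Then $\mathbf{R}\,\mathrm{id}\colon \GH \to \GH_\fin$ is computed by global-fibrant replacement, while $\mathbf{L}\,\mathrm{id}\colon \GH_\fin \to \GH$ is the identity on $\fin$-cofibrant objects. Fixing a global-fibrant replacement $E \to \hat E$ and a lift $\hat f\colon \hat E \to \hat E'$ of $f$ to global-fibrant replacements, the derived adjunction supplies natural isomorphisms
\[
E^k(\G) \;=\; \GH(X_\G, E[k]) \;\cong\; \GH_\fin(X_\G, \hat E[k]), \qquad (E')^k(\G) \;\cong\; \GH_\fin(X_\G, \hat E'[k]),
\]
under which $f_\ast$ corresponds to $(\hat f)_\ast$. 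Now $E \to \hat E$ and $E' \to \hat E'$ are global equivalences, hence $\fin$-global equivalences, so by two-out-of-three $\hat f$ is a $\fin$-global equivalence between $\fin$-fibrant spectra (global-fibrant implies $\fin$-fibrant, since the right Quillen identity preserves fibrant objects). Thus $\hat f$ is an isomorphism in $\GH_\fin$, so $(\hat f)_\ast$ is a bijection, and transporting back along the displayed isomorphisms shows that $f_\ast\colon E^k(\G) \to (E')^k(\G)$ is an isomorphism; it is an isomorphism of abelian groups because all the identifications are additive.

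Since the assertion that $X_\G$ is left induced from the finite groups was already carried out in the paragraph before the statement --- using that every morphism $\B K \to \G$ from a universal subgroup factors through a finite stabilizer of $\G$, so that $\overline F(\G)$ lies in the image of left Kan extension from $\Top_{\Orb^{\fin}}$, together with the $\fin$-relative compatibility of the zig-zag of Quillen equivalences and of $\Sigma_+^\infty$ --- there is no serious obstacle here: the remaining work is pure derived-adjunction bookkeeping. The one point that deserves care is checking that the isomorphism $\GH(X_\G, -) \cong \GH_\fin(X_\G, \mathbf{R}\,\mathrm{id}(-))$ is natural in the target variable and carries $f_\ast$ to $(\hat f)_\ast$, which is just naturality of the (co)unit of the derived adjunction, and that the identifications are additive so that the resulting bijection is a group isomorphism. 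As a sanity check one could instead argue by induction over a $\fin$-global cell structure on $X_\G$, reducing to the case $X_\G = \Sigma_+^\infty \LB_{G,V}(\ast)$ with $G$ finite, where the claim follows from \Cref{cohom} together with the fact that a $\fin$-global equivalence induces isomorphisms on $\pi_\ast^G$ for finite $G$; but this merely reproves the "left induced" statement, so using the machinery already in place is cleaner.
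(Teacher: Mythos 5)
Your proposal is correct and follows essentially the same route as the paper: the entire content lies in the preceding paragraph's observation that $(\Sigma_+^\infty \circ \RLO \circ \Repl_{\fib} \circ \lieli)(\G)$ is left induced from the family of finite groups, and the paper then treats the proposition as an immediate consequence via \cite[Thm.~4.5.1(i)]{sch18}. What you add is an explicit unpacking of that citation as derived-adjunction bookkeeping for the Quillen pair given by the identity between the $\fin$-global and global model structures, which is a faithful and correct rendering of the step the paper delegates to the reference.
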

We are now going to verify that the orbifold cohomology of a global quotient orbifold can be identified with the equivariant cohomology of the underlying manifold. 
\begin{lemma} \label{orbfldspc}
	Let $G$ be a universal subgroup of $\LI$, $V$ be a faithful non-zero $G$-representation and $A$ a $G$-CW complex. There is a zig-zag of global equivalences \[\left(\RLO \circ \Repl_{\fib} \right)(\LI \times_G A) \simeq \LB_{G,V}A \,. \]
\end{lemma}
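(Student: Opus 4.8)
The plan is to reduce the statement to a comparison of $\LI$-spaces through the Quillen equivalence $\LLO\colon\Spc\Leftrightarrows\LI\dash\Top\noloc\RLO$ of \Cref{LSOS}, and then to prove that comparison by an explicit fixed-point computation. First I would record that $\LB_{G,V}A$ is cofibrant: since $V$ is faithful and non-zero and $A$ is a $G$-CW complex, the functor $\LB_{G,V}$ is left Quillen (\cite[Rem. 1.2.24]{sch18}), so $\LB_{G,V}A$ is cofibrant in (at least) the positive global model structure on $\Spc$, and as positive cofibrations are cofibrations it is cofibrant in the global model structure as well. Hence \Cref{WEbySch} applies and yields a natural global equivalence of $\LI$-spaces $\xi(\LB_{G,V}A)\colon\LLO(\LB_{G,V}A)\to(\LB_{G,V}A)(\R^\infty)$, and by \Cref{atRinf} the target is $\LB(V,\R^\infty)\times_G A$, with $\LI$ acting by post-composition on the first factor (here $\LB(V,\R^\infty)=\colim_{W\in s(\R^\infty)}\LB(V,W)$ and the $G$-quotient commutes with the colimit).

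The heart of the argument is then to exhibit a global equivalence of $\LI$-spaces between $\LI\times_G A$ and $\LB(V,\R^\infty)\times_G A$. Fixing a $G$-equivariant linear isometric embedding $j\colon V\hookrightarrow\R^\infty$ — which exists because $\R^\infty$ is a complete $G$-universe, $G$ being universal — restriction along $j$, i.e.\ $\omega\mapsto\omega\circ j$, is $\LI$-equivariant for the post-composition actions and $G$-equivariant, so it descends to a map $r\colon\LI\times_G A\to\LB(V,\R^\infty)\times_G A$. I would prove $r$ is a global equivalence one universal subgroup $K\subseteq\LI$ at a time: both $\LI$ (by \cite[Prop. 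A.10]{sch19}) and $\LB(V,\R^\infty)$ are, as $(K\times G)$-spaces with $K$ acting by post-composition through the $K$-universe structure on $\R^\infty$ and $G$ by pre-composition through $V$, universal spaces for the family of graph subgroups $\Gamma\leq K\times G$ with $\Gamma\cap(1\times G)=1$. For $\LB(V,\R^\infty)$ this is a direct check: the fixed points of a graph subgroup attached to $\alpha\colon L\to G$ ($L\leq K$ closed) are the space of $L$-equivariant linear isometric embeddings of the finite-dimensional $L$-representation $V_\alpha$ into the complete $L$-universe $\R^\infty$, which is contractible, while for a subgroup not of this form the fixed points are empty because $V$ is faithful. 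Thus $r$ restricts to a $(K\times G)$-map between $(K\times G)$-CW models for the same universal space, hence to a $(K\times G)$-homotopy equivalence; applying $-\times_G A$ turns it into a $K$-equivariant homotopy equivalence, in particular a weak equivalence on $K$-fixed points. Concatenating with $\xi$ I obtain a zig-zag $\LLO(\LB_{G,V}A)\xrightarrow{\ \sim\ }\LB(V,\R^\infty)\times_G A\xleftarrow{\ \sim\ }\LI\times_G A$ of global equivalences of $\LI$-spaces.

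Finally I would transport this across the Quillen equivalence. Since $\LB_{G,V}A$ is cofibrant, the zig-zag above computes $\mathbb{L}\LLO[\LB_{G,V}A]=[\LI\times_G A]$ in $\ho(\LI\dash\Top)$; applying $\mathbb{R}\RLO$, using that the derived adjunction of a Quillen equivalence has invertible unit, and using that $\RLO\circ\Repl_{\fib}$ computes $\mathbb{R}\RLO$ (because $\Repl_{\fib}$ is a fibrant replacement in $\LI\dash\Top$ and $\RLO$ sends weak equivalences between fibrant objects to weak equivalences), I get $[\LB_{G,V}A]\cong[(\RLO\circ\Repl_{\fib})(\LI\times_G A)]$ in $\ho(\Spc)$, which is precisely a zig-zag of global equivalences between the two orthogonal spaces.

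The main obstacle, I expect, is the geometric comparison $\LI\times_G A\simeq\LB(V,\R^\infty)\times_G A$: one must keep track of the several commuting $G$- and $\LI$-actions, verify that $\LB(V,\R^\infty)$ genuinely is a universal space for graph subgroups — both the contractibility of the equivariant-embedding spaces (restriction of complete universes to closed subgroups, plus the standard contractibility statement) and the vanishing of the non-graph fixed points, which is exactly where faithfulness of $V$ is used — and check enough cofibrancy of $\LI$ and $\LB(V,\R^\infty)$ that the phrase "universal space for a family" is meaningful and equivariant Whitehead applies. Everything else (the cofibrancy of $\LB_{G,V}A$, the identification of $(\LB_{G,V}A)(\R^\infty)$, and the formal transport across the Quillen equivalence) is routine given the cited results.
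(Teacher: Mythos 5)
Your proposal is correct and follows essentially the same route as the paper: cofibrancy of $\LB_{G,V}A$, the global equivalence $\xi(\LB_{G,V}A)\colon \LLO(\LB_{G,V}A)\to \LB(V,\R^\infty)\times_G A$ from \Cref{WEbySch}, the comparison with $\LI\times_G A$, and transport across the Quillen equivalence of \Cref{LSOS}. The only difference is that the paper simply cites \cite[Prop.~1.10]{sch19} for the equivalence $\LB(V,\R^\infty)\times_G A\simeq \LI\times_G A$, whereas you reprove it via universal spaces for graph subgroups --- which is a correct (and essentially the cited) argument.
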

\begin{proof}
	The orthogonal space $\LB_{G,V}A$ is cofibrant in the global model structure. Furthermore, $\LB_{G,V}A(0)=\emptyset$ and therefore $\LB_{G,V}$ is also cofibrant in the positive global model structure, see \cite[Def. 1.2.22]{sch18}. The composition \[\LLO ( \LB_{G,V}A ) \xrightarrow{\xi(\LB_{G,V}A)} (\LB_{G,V}A)(\R^\infty) \cong \LB(V,\R^\infty) \times_G A \simeq \Repl_{\fib}(\LB(V,\R^\infty) \times_G A) \] is a global equivalence by \Cref{WEbySch}. By \Cref{LSOS}, the adjoint map \[\LB_{G,V}A \longrightarrow \RLO( \Repl_{\fib}(\LB(V,\R^\infty) \times_G A))\] also is a global equivalence. Moreover, \[\LB(V,\R^\infty) \times_G A \simeq \LI \times_G A \] by \cite[Prop. 1.10]{sch19}. The claim follows because the functor $\RLO$ is right Quillen and therefore preserves global equivalences between fibrant objects. 
\end{proof}
\begin{corollary} \label{cohomorbi}
	Let $G$ be a compact Lie group acting almost freely on a manifold $M$. The orbifold cohomology of $G \ltimes M$ is isomorphic to the $G$-equivariant cohomology of $M$, i.e. \[E^k(G \ltimes M) \cong E_G^k(M) \] for any orthogonal spectrum $E$.
\end{corollary}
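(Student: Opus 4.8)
The plan is to obtain the isomorphism by stringing together \Cref{fororbifolds}, \Cref{orbfldspc} and \Cref{cohom}, after two preliminary observations. First, by \Cref{allareuniversal} we may choose an isomorphism $\phi \colon G \xrightarrow{\,\cong\,} G'$ onto a universal subgroup $G'$ of $\LI$. Transporting the action along $\phi$ produces an isomorphism of Lie groupoids $G \ltimes M \cong G' \ltimes M$ (hence the same orbifold, so $E^k(G \ltimes M) = E^k(G' \ltimes M)$); on the other hand, restriction of actions along $\phi$ is an isomorphism of categories $G'\dash\Sp \xrightarrow{\,\cong\,} G\dash\Sp$ which sends $E_{G'}$ to $E_G$ and $\Sigma_{G',+}^\infty M$ to $\Sigma_{G,+}^\infty M$, whence $E_{G'}^k(M) \cong E_G^k(M)$. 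So we may assume $G$ is a universal subgroup of $\LI$. Second, since the $G$-action on $M$ is smooth, $M$ carries the structure of a $G$-CW complex by \cite{ill83}, and a compact Lie group always admits a faithful non-zero finite-dimensional representation $V$; these are exactly the hypotheses under which \Cref{orbfldspc} and \Cref{cohom} apply to $M$.

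Next I would assemble the chain of weak equivalences. By \Cref{fororbifolds}, $\lieli(G \ltimes M)$ and $\LI \times_G M$ are linked by a zig-zag of global equivalences of $\LI$-spaces. Applying $\Repl_{\fib}$ (a fibrant replacement functor, hence homotopical), then $\RLO$ (right Quillen for the global and positive global model structures by \Cref{LSOS}, hence homotopical on the fibrant objects produced by $\Repl_{\fib}$), then $\Sigma_+^\infty$ (homotopical by \cite[Cor. 4.1.9]{sch18}), this zig-zag is carried to a zig-zag of global equivalences of orthogonal spectra between $(\Sigma_+^\infty \circ \RLO \circ \Repl_{\fib} \circ \lieli)(G \ltimes M)$ and $(\Sigma_+^\infty \circ \RLO \circ \Repl_{\fib})(\LI \times_G M)$. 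By \Cref{orbfldspc} there is a further zig-zag of global equivalences $(\RLO \circ \Repl_{\fib})(\LI \times_G M) \simeq \LB_{G,V}M$, and applying $\Sigma_+^\infty$ once more gives $(\Sigma_+^\infty \circ \RLO \circ \Repl_{\fib})(\LI \times_G M) \simeq \Sigma_+^\infty \LB_{G,V}M$. Since $\GH$ is the localization of $\Sp$ at the global equivalences, every term of these zig-zags becomes an isomorphism in $\GH$.

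Putting this together with the definition of orbifold cohomology and \Cref{cohom}, I obtain
\begin{align*}
E^k(G \ltimes M) &= \GH\bigl((\Sigma_+^\infty \circ \RLO \circ \Repl_{\fib} \circ \lieli)(G \ltimes M),\, E[k]\bigr) \\
&\cong \GH\bigl(\Sigma_+^\infty \LB_{G,V}M,\, E[k]\bigr) \\
&\cong (E_G)^k(M) \;=\; E_G^k(M),
\end{align*}
as desired. I do not anticipate a genuine obstacle: the mathematical content is entirely contained in the cited results. The only points requiring modest care are the reduction to a universal subgroup of $\LI$ — for which one must verify that both sides of the claimed isomorphism are insensitive to transport along a group isomorphism — and the observation that a smooth $G$-manifold admits a $G$-CW structure, which is what lets \Cref{orbfldspc} and \Cref{cohom} apply to $M$.
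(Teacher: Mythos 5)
Your proof is correct and follows essentially the same route as the paper: the paper's proof likewise combines \Cref{fororbifolds} and \Cref{orbfldspc} to identify $(\RLO\circ\Repl_{\fib}\circ\lieli)(G \ltimes M)$ with $\LB_{G,V}M$ up to global equivalence and then invokes \Cref{cohom}. The only difference is that you spell out two details the paper leaves implicit --- the reduction to a universal subgroup of $\LI$ via \Cref{allareuniversal} and the existence of a $G$-CW structure on $M$ via Illman's theorem --- both of which are handled correctly.
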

\begin{proof}
	By \Cref{orbfldspc} and \Cref{fororbifolds}, we have \[\left(\RLO\circ\Repl_{\fib} \circ\lieli\right)(G \ltimes M) \simeq \LB_{G,V}M\] for an almost free action of a compact Lie group $G$ on a manifold $M$. The rest is a reformulation of \Cref{cohom}. 
\end{proof}
For $\tilde U$ an open subset of the object manifold $\G_0$ of a Lie groupoid $\G$, we define the \defn{restricted groupoid $\res{\G}{\tilde U}$} of $\G$ to $\tilde U$ as the Lie groupoid with object manifold $\tilde U$ and arrow manifold $(s,t)^{-1}(\tilde U)$. The structure maps are given by restricting the corresponding maps of $\G$. We moreover write \[\G_0/\G_1=\coeq(s,t \colon \G_1 \longrightrightarrows \G_0)\] for the \defn{orbit space} of $\G$.
\begin{proposition}[Mayer-Vietoris sequence] \label{MVS}
	Let $\G$ be an orbifold and let $U,V$ be two open subsets covering the orbit space $\G_0/\G_1$. Let $\tilde U$ and $\tilde V$ be the preimages of $U$ and $V$ under the quotient map $\G_0 \to \G_0/\G_1$. There is a long exact sequence of orbifold cohomology groups \[\begin{tikzcd}[sep=2.7ex]
	\dots \ar[r] & E^k\left(\G\right) \ar[r,"{\left(i_U^\ast,i_V^\ast\right)}"] &[3.8ex] E^k\left(\res{\G}{\tilde U} \right) \oplus E^k\left(\res{\G}{\tilde V} \right) \ar[r,"{j_U^\ast - j_V^\ast}"] &[3.8ex] E^k\left(\res{\G}{\tilde U \cap \tilde V} \right) \ar[r] & E^{k+1}(\G) \ar[r] & \dots 
	\end{tikzcd} \] 
	for every orthogonal spectrum $E$. Here, $i_U \colon \res{\G}{\tilde U} \to \G$, $i_V \colon \res{\G}{\tilde V} \to \G$, $j_U \colon \res{\G}{\tilde U \cap \tilde V} \to \res{G}{\tilde U}$ and $j_V \colon \res{\G}{\tilde U \cap \tilde V} \to \res{\G}{\tilde V}$ denote the inclusion morphisms. 
\end{proposition}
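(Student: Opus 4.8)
The plan is to reduce everything to a homotopy pushout square of orthogonal spectra, and then invoke the long exact sequence associated to a cofiber sequence (the categories $\GH$ and $G\dash\SH$ are triangulated, as recalled in the excerpt). First I would observe that the four restricted groupoids $\res{\G}{\tilde U}$, $\res{\G}{\tilde V}$, $\res{\G}{\tilde U \cap \tilde V}$ and $\G$ fit into a strict pushout diagram of Lie groupoids: since $\tilde U, \tilde V$ are $\G_1$-saturated open subsets of $\G_0$ (being preimages of open subsets of the orbit space) with $\tilde U \cup \tilde V = \G_0$, we have $\G_0 = \tilde U \cup_{\tilde U \cap \tilde V} \tilde V$ in $\Top$ (indeed in open subsets of $\G_0$), and likewise $\G_1 = (s,t)^{-1}(\tilde U) \cup_{(s,t)^{-1}(\tilde U \cap \tilde V)} (s,t)^{-1}(\tilde V)$, compatibly with all structure maps. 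Hence $\res{\G}{\tilde U} \cup_{\res{\G}{\tilde U\cap\tilde V}} \res{\G}{\tilde V} \cong \G$ as Lie groupoids, and this is moreover a homotopy pushout on the level of topological groupoids because the inclusions are open embeddings (so the relevant maps on nerves are objectwise closed cofibrations, in fact open-and-closed decompositions).

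Next I would push this square through the chain of functors $\Sigma_+^\infty \circ \RLO \circ \Repl_{\fib} \circ \lieli$ that defines orbifold cohomology, and argue that the result is still a homotopy pushout of orthogonal spectra. For this it suffices that each functor in the chain preserves homotopy pushouts of the relevant diagrams. The functor $\overline F \colon \Orbfld \to \Top_{\Orb}$ evaluated at a universal subgroup $K$ is $\|\gmap(\B K, -)\|$ (composed with the fat-to-thin realization comparison, which is a levelwise weak equivalence by \Cref{isgood}): since $\B K$ is a fixed finite group(oid) and mapping out of it commutes with the open-cover pushout on object and arrow spaces levelwise, $\overline F$ carries our square to a levelwise homotopy pushout of $\Orb$-spaces, hence (the $\Orb$-spaces being cofibrant enough after applying $\Repl_{\cof}$) a homotopy pushout. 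The remaining functors $(f_1)^\ast$, $\Repl_{\cof}$, $(f_2)_!$, $\Lambda$, $\Repl_{\fib}$, $\RLO$, $\Sigma_+^\infty$ are each either left or right Quillen equivalences between model categories in which the objects at hand are appropriately (co)fibrant, or derived functors that preserve homotopy pushouts because they are part of a Quillen equivalence; in particular left Quillen functors preserve homotopy pushouts, and $\RLO$ preserves the global equivalences between fibrant objects that we need, exactly as used in \Cref{orbfldspc}. Therefore $(\Sigma_+^\infty \circ \RLO \circ \Repl_{\fib} \circ \lieli)$ applied to the square is a homotopy cocartesian square of orthogonal spectra.

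Finally, a homotopy cocartesian square of spectra gives, on applying $\GH(-, E[k])$, a Mayer–Vietoris long exact sequence: the homotopy pushout $P$ of $B \leftarrow A \rightarrow C$ sits in a cofiber sequence $A \to B \vee C \to P$ (up to the usual rotation), and $\GH(-, E[k])$ is a cohomological functor on the triangulated category $\GH$, producing exactly the displayed sequence with connecting maps $E^k(\res{\G}{\tilde U \cap \tilde V}) \to E^{k+1}(\G)$; naturality and the identification of the two maps out of the middle term with $(i_U^\ast, i_V^\ast)$ and $j_U^\ast - j_V^\ast$ follow by chasing the inclusion morphisms through the functors. The main obstacle I anticipate is the bookkeeping in the previous paragraph: verifying that the intermediate objects really are cofibrant (resp. fibrant) at each stage of the long functor chain so that each step is genuinely homotopy-invariant and preserves the pushout — in particular handling the cofibrant-replacement $\Repl_{\cof}$ and fibrant-replacement $\Repl_{\fib}$ steps, and checking that $\|\cdot\|$ versus $|\cdot|$ comparison interacts well with the pushout. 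None of this is deep, but it is where care is needed; everything else is formal triangulated-category nonsense once the homotopy cocartesian square is in hand.
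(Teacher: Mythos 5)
Your proposal is correct and follows essentially the same route as the paper: identify the square of restricted groupoids as an open-cover pushout that $|\gmap(\B K,-)|$ turns into a levelwise homotopy pushout (using saturation of $\tilde U$, $\tilde V$), push it through the functor chain, and read off the long exact sequence from the resulting distinguished triangle in the stable category $\GH$. The one step you flag as ``bookkeeping''---that a levelwise homotopy pushout of $\Orb$-spaces is a genuine homotopy pushout---is exactly where the paper does the real work, replacing the diagram cofibrantly in the projective model structure on $[P,\Top_{\Orb}]$ and using that projective-cofibrant diagrams consist levelwise of Hurewicz cofibrations; your appeal to $\Repl_{\cof}$ (which replaces objects, not diagrams) does not quite supply this, but the intended argument is the same.
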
 
\begin{proof}
	We first check that the commutative diagram of orbifolds \begin{equation} \label{mvpushout} \begin{tikzcd}[sep=large]
	\res{\G}{\tilde U \cap \tilde V} \ar[r,"j_U"] \ar[d,swap,"j_V"] & \res{\G}{\tilde U} \ar[d,"i_U"] \\
	\res{\G}{\tilde V} \ar[r,"i_V"] & \G
	\end{tikzcd} \end{equation} is sent to a pushout square in $\Top$ under the functor $|\gmap(\B K, -)| \colon \Orbfld \to \Top$ for a universal subgroup $K$ of $\LI$. The geometric realization is a left adjoint and therefore commutes with pushouts. We can hence also check that the functor which assigns the $n$-th space of the nerve of $\gmap(\B K,\G)$ to a topological groupoid $\G$ sends \eqref{mvpushout} to a pushout. It can be checked on elements that this is true for the underlying sets. It is also a pushout in $\Top$ because all maps are sent to inclusions of open subsets.
	
	The natural map from the homotopy pushout to a pushout along open inclusions is a weak equivalence, see \cite[Cor. 1.6]{di04}. This applies to the above situation. The functor $\overline F$ from \Cref{lielifunctor} hence takes \eqref{mvpushout} to a levelwise homotopy pushout in $\Top_{\Orb}$. 
	
	We are now going to prove that a levelwise homotopy pushout in $\Top_{\Orb}$ is a homotopy pushout. Let $P$ denote the diagram $(\begin{tikzcd}
	\cdot & \cdot \ar[r] \ar[l] & \cdot 
	\end{tikzcd})$. 
	Pushouts can be computed levelwise. The homotopy pushout functor in $\Top_{\Orb}$ therefore is the left derived functor of \[\left(\colim_P\right)^{\Orb} \colon [P,\Top_{\Orb}] \cong [\Orb,[P,\Top]] \longrightarrow [\Orb,\Top] = \Top_{\Orb} \,.\] 
	It follows from general model category theory that the homotopy pushout can be computed as the pushout of a cofibrant replacement in the projective model structure on $[P,\Top_{\Orb}]$, compare \cite[Cor. 5.1.6]{hov99}. The homotopy pushout in $\Top_{\Orb}$ hence is weakly equivalent to the levelwise homotopy pushout if a cofibrant diagram in $[P,\Top_{\Orb}]\cong[\Orb,[P,\Top]]$ levelwise consists of diagrams in $[P,\Top]$ for which the natural map from the homotopy pushout to the pushout is a weak equivalence.
	This is the case for diagrams in $[P,\Top]$ where both arrows of $P$ are mapped to Hurewicz cofibrations, see e.g. \cite[Thm. A.7]{di04}. As can be seen from the characterization given in \cite[Thm. 5.1.3]{hov99}, a cofibrant object in $[P,\Top_{\Orb}]$ maps the two arrows of $P$ to cofibrations between $\Orb$-spaces and these cofibrations are levelwise Hurewicz cofibrations by \cite[Prop. 2.2(iv)]{kor17}.
	
	The rest of the proof is now a formal consequence. The functors $(f_1)_\ast \colon \Top_{\Orb} \to \Top_{\Orb^\prime}$, $\Lambda \circ (f_2)_! \circ \Repl_{\cof} \colon \Top_{\Orb^\prime} \to \LI\dash\Top$ and $\RLO \circ \Repl_{\fib} \colon \LI\dash\Top \to \Spc$ are lifts of derived Quillen equivalences and $\Sigma_+^\infty \colon \Spc \to \Sp$ is a homotopical left Quillen functor. All these functors hence preserve homotopy pushouts. We conclude that \eqref{mvpushout} is sent to a homotopy pushout in $\Sp$. The category $\Sp$ together with the global model structure is a stable model category and the homotopy pushout gives rise to a distingushed triangle in $\GH$ as explained in \cite[Lem. 5.7]{may01}. The claim follows by applying $\GH(-,E[k])$ to the distingushed triangle.
\end{proof}
\begin{comment}
\begin{lemma}\todo{labeling with matrices} \label{DT}
Let $\SMC$ be a stable model category and let \[\begin{tikzcd}
A \ar[r,"i"] \ar[d,,swap,"j"] & B \ar[d,"k"] \\
C \ar[r,"l"] & D
\end{tikzcd} \]
be a homotopy pushout square in $\SMC$. Then there is a distingushed triangle \[\begin{tikzcd}[ampersand replacement=\&]
A \ar[r,"{\left(i,j \right)}"] \& B \oplus C \ar[r,swap,"\begin{psmallmatrix}
k \\ l
\end{psmallmatrix}"] \& D \ar[r] \& A \left[1\right]
\end{tikzcd} \]
in $\ho(\SMC)$.
\end{lemma}
\begin{proof}
Consider the following commutative diagram: \[\begin{tikzcd}[ampersand replacement=\&,sep=large]
A \ar[r,"j"] \ar[d,swap,"{(i,j)}"] \& C \ar[d] \ar[r] \& 0 \ar[d] \\
B \oplus C \ar[r,swap,"{\begin{psmallmatrix}
k & 0 \\
l & \id
\end{psmallmatrix}}"] \& D \oplus C \ar[r] \& D 
\end{tikzcd}\]
The left and right square are homotopy pushout squares because they are sums of such. The outer square hence also is a homotopy pushout square. The distingushed triangle arises as the homotopy cofiber sequence of $(i,j) \colon A \to B \oplus C$.
\end{proof}
\end{comment}
\begin{proposition}[Additivity]
	Let $E$ be an orthogonal spectrum. Let $(\G^i)_{i \in I}$ be a collection of orbifolds indexed by a countable set $I$. The inclusions $\G^j \to \coprod_{i \in I} \G^i$ induce an isomorphism of orbifold cohomology groups \[E^k \left( \coprod_{i \in I} \G^i \right) \cong \prod_{i \in I} E^k(\G^i)  \,. \]
\end{proposition}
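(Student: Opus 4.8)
The plan is to show that the functor
\[
\Psi := \Sigma^\infty_+ \circ \RLO \circ \Repl_{\fib} \circ \lieli \colon \Orbfld \longrightarrow \Sp
\]
which defines orbifold cohomology sends the coproduct $\coprod_{i \in I}\G^i$ to an orthogonal spectrum that is globally equivalent, naturally in the $\G^i$, to the wedge $\bigvee_{i \in I}\Psi(\G^i)$, i.e.\ to the coproduct of the $\Psi(\G^i)$ in $\GH$. Granting this, the proposition follows formally: in any category with coproducts one has $\GH(\coprod_i Y^i, Z) \cong \prod_i \GH(Y^i, Z)$, and under the natural identification above this isomorphism is exactly the map induced by the inclusions $\G^j \hookrightarrow \coprod_i \G^i$; taking $Z = E[k]$ gives the statement, with naturality in the collection built in.

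First I would record that $\coprod_{i \in I}\G^i$ is again an orbifold: for countable $I$ the object manifold $\coprod_i \G^i_0$ is second-countable, while properness of $(s,t)$ and discreteness of the isotropy groups are local conditions, verified on each summand; moreover $\coprod_i \G^i$ is the categorical coproduct in $\Orbfld$. It then suffices to check that every functor in the composite defining $\lieli$ (see \Cref{lieli}) preserves this coproduct up to global equivalence. The functor $\overline F$ of \Cref{lielifunctor} preserves coproducts on the nose: since the groupoid $\B K$ is connected, any morphism $\B K \to \coprod_i \G^i$ lands in a single summand, so $\gmap(\B K, \coprod_i \G^i) \cong \coprod_i \gmap(\B K, \G^i)$ as topological groupoids, and the nerve and the geometric realization, both colimit-preserving, commute with coproducts. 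The restriction functor $(f_1)^\ast$ is computed levelwise, hence preserves coproducts; the functors $\Lambda$ and $(f_2)_!$ are left Quillen, hence preserve coproducts strictly and preserve global equivalences between cofibrant objects. For the cofibrant replacement functor $\Repl_{\cof}$ one uses that a coproduct of cofibrant objects is cofibrant and that a coproduct of levelwise weak equivalences is a levelwise weak equivalence, so $\coprod_i \Repl_{\cof}\bigl((f_1)^\ast\overline F(\G^i)\bigr)$ is a cofibrant replacement of $(f_1)^\ast\overline F(\coprod_i \G^i)$ and is therefore connected to $\Repl_{\cof}\bigl((f_1)^\ast\overline F(\coprod_i \G^i)\bigr)$ by a weak equivalence. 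Combining, $\lieli(\coprod_i \G^i)$ is globally equivalent to $\coprod_i \lieli(\G^i)$, which is cofibrant.

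Next I would push this coproduct through $\RLO \circ \Repl_{\fib}$ and then $\Sigma^\infty_+$ at the level of homotopy categories. As a coproduct of cofibrant $\LI$-spaces, $\coprod_i \lieli(\G^i)$ represents the coproduct of the $\lieli(\G^i)$ in the homotopy category of $\LI\dash\Top$ with respect to the global equivalences. The composite $\RLO \circ \Repl_{\fib}$ computes the right-derived functor of $\RLO$, which by the Quillen equivalence of \Cref{LSOS} is an equivalence of homotopy categories and hence preserves coproducts; so $(\RLO \circ \Repl_{\fib})(\coprod_i \lieli(\G^i))$ is globally equivalent to the coproduct of the orthogonal spaces $(\RLO \circ \Repl_{\fib})(\lieli(\G^i))$ in the homotopy category of $\Spc$. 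Finally $\Sigma^\infty_+$ of \Cref{suspensionspectrum} is left Quillen and homotopical, so the functor it induces on homotopy categories preserves homotopy coproducts; since $\GH$ is the homotopy category of the stable model category $\Sp$, homotopy coproducts there are its coproducts, represented by wedges. Chaining these natural identifications yields $\Psi(\coprod_i \G^i) \simeq \bigvee_i \Psi(\G^i)$, compatibly with the inclusions.

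The main obstacle is the $\lieli$-step. The composite defining $\lieli$ contains the cofibrant replacement functor $\Repl_{\cof}$, which need not commute with coproducts strictly, so one must argue as above that it does so up to a weak equivalence and keep track of the comparison map for naturality; and one must check carefully that $\gmap(\B K, -)$ really commutes with coproducts of topological groupoids at the level of the mapping-space topologies, the relevant point being that $\B K$ is connected. Once these verifications are in place, the rest — sorting out which homotopy coproducts are meant and invoking left-Quillenness or the Quillen equivalences to see that they are preserved, then applying the universal property of the coproduct in $\GH$ — is routine.
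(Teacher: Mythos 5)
Your proposal is correct and follows essentially the same route as the paper: show that each functor in the composite defining orbifold cohomology sends the coproduct of orbifolds to a (homotopy) coproduct — using that $\overline F$ preserves coproducts, that strict coproducts of cofibrant objects compute homotopy coproducts, and that the remaining functors are derived Quillen equivalences or homotopical left Quillen functors — and then apply $\GH(-,E[k])$. One small correction: in the paper's notation $(f_2)_!$ is the \emph{right} adjoint of the left Quillen functor $(f_2)^\ast$, so it is right Quillen and does not preserve coproducts strictly; but since it is part of a Quillen equivalence its total derived functor is an equivalence of homotopy categories and therefore preserves homotopy coproducts, which is exactly how the paper handles this step.
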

\begin{proof}
	The functor $\overline F$ from \Cref{functorLI} preserves coproducts because they can be computed levelwise and $|\gmap(\B K,-)| \colon \Orbfld \to \Top$ preserves coproducts for any universal subgroup $K$ of $\LI$ by a similar reasoning as for pushouts in \Cref{MVS}. It follows from general model category theory that a coproduct in the homotopy category of $\Top_{\Orb}$ can be computed as the coproduct of a cofibrant replacement, see \cite[Ex. 1.3.11]{hov99}. But coproducts preserve weak equivalences in $\Top_{\Orb}$ because they can be computed levelwise in $\Top$. A coproduct in $\Top_{\Orb}$ therefore also computes a coproduct in the homotopy category. % and hence a homotopy coproduct, see \cite[Ex. 1.3.11]{hov99}. 
	The following derived Quillen equivalences $(f_1)_\ast \colon \Top_{\Orb} \to \Top_{\Orb^\prime}$, $\Lambda \circ (f_2)_! \circ \Repl_{\cof} \colon \Top_{\Orb^\prime} \to \LI\dash\Top$ and $\RLO \circ \Repl_{\fib} \colon \LI\dash\Top \to \Spc$ are lifts of equivalences of homotopy categories. The homotopical left Quillen functor $\Sigma_+^\infty \colon \Spc \to \Sp$ is a lift of a left adjoint between the respective homotopy categories. All these functors hence preserve coproducts in the homotopy categories. %Similar as in \Cref{MVS} this homotopy coproduct is sent to a homotopy coproduct in $\GH$. 
	All in all, the orbifold $\coprod_{i \in I} \G^i$ is sent to a coproduct of $(\G^i)_{i \in I}$ in $\GH$. The claim follows by applying $\GH(-,E[k])$.
\end{proof}
We check that the underlying non-equivariant homotopy type of an orbifold is modeled by its classifying space. Let $\UF \colon \GH \to \SH$ denote the forgetful functor induced by the identity on $\Sp$.
\begin{proposition}\label{nonequisp} Let $\G$ be an orbifold. There is a natural isomorphism
	\[\UF \left(\left(\Sigma_+^\infty \circ \RLO \circ \Repl_{\fib}\circ \lieli \right)(\G)\right) \cong \Sigma_+^\infty |\G|  \] in the stable homotopy category $\SH$. 
\end{proposition}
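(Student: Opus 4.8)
The plan is to stabilise the chain of weak equivalences behind \Cref{nonequi}. Write $Z=\bigl(\Repl_{\fib}\circ\lieli\bigr)(\G)$, a fibrant $\LI$-space for the global model structure, and recall that the trivial subgroup $\tg$ of $\LI$ is universal, so that $Z^{\tg}$ is just the underlying space of $Z$. Since $\Repl_{\fib}$ is a global equivalence it is in particular a weak equivalence on $\tg$-fixed points, so \Cref{nonequi} supplies a natural weak equivalence $Z^{\tg}\simeq|\G|$; the job is to promote this to a statement about $\Sigma^\infty_+\circ\RLO$.

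First I would replace $\RLO Z$ by a cofibrant orthogonal space: choose a cofibrant replacement $q\colon Q\xrightarrow{\ \sim\ }\RLO Z$ in the positive global model structure on $\Spc$, so that $Q$ is in particular cofibrant for the global model structure on $\Spc$. As $\Sigma^\infty_+$ carries global equivalences of orthogonal spaces to global equivalences of orthogonal spectra, $\Sigma^\infty_+q$ is a global equivalence, hence $\UF(\Sigma^\infty_+\RLO Z)\cong\UF(\Sigma^\infty_+Q)$ in $\SH$. Next I would identify the underlying space of $Q(\R^\infty)$ with that of $Z$: by \Cref{WEbySch} the natural map $\xi(Q)\colon\LLO Q\to Q(\R^\infty)$ is a global equivalence of $\LI$-spaces, and since $(\LLO,\RLO)$ is a Quillen equivalence by \Cref{LSOS} and $Z$ is fibrant, the derived counit $\LLO Q\to Z$ is a global equivalence of $\LI$-spaces. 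Chaining these gives a natural zig-zag of global equivalences $Q(\R^\infty)\simeq\LLO Q\simeq Z$, so $Q(\R^\infty)^{\tg}\simeq Z^{\tg}\simeq|\G|$.

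The one genuinely new ingredient — and the step I expect to be the main obstacle — is the natural identification $\UF(\Sigma^\infty_+Y)\cong\Sigma^\infty_+\bigl(Y(\R^\infty)\bigr)$ in $\SH$ for a cofibrant orthogonal space $Y$, where the right-hand $\Sigma^\infty_+$ is the ordinary suspension spectrum functor of \Cref{sustopsp}; in words, the underlying non-equivariant stable homotopy type of a global suspension spectrum is the ordinary suspension spectrum of the underlying space. I would prove this on homotopy groups: the underlying orthogonal spectrum of $\Sigma^\infty_+Y$ is $V\mapsto S^V\wedge Y(V)_+$, so its $k$-th homotopy group is $\colim_n\pi_{n+k}\bigl(S^n\wedge Y(\R^n)_+\bigr)$, and the maps $Y(\R^n)\to Y(\R^\infty)$ induce a comparison to $\colim_n\pi_{n+k}\bigl(S^n\wedge Y(\R^\infty)_+\bigr)=\pi^s_k\bigl(Y(\R^\infty)_+\bigr)$. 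For cofibrant $Y$ the structure maps $Y(\R^n)\to Y(\R^{n+1})$ are Hurewicz cofibrations and $Y(\R^\infty)=\colim_nY(\R^n)$, and since stable homotopy groups commute with such sequential colimits the comparison is an isomorphism; for $Y$ with constant value this reduces to the remark following \Cref{suspensionspectrum}.

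Granting this with $Y=Q$, one then chains $\UF(\Sigma^\infty_+\RLO Z)\cong\UF(\Sigma^\infty_+Q)\cong\Sigma^\infty_+\bigl(Q(\R^\infty)^{\tg}\bigr)\cong\Sigma^\infty_+|\G|$ in $\SH$, and the composite is natural in $\G$ because the cofibrant replacement functor, the transformation $\xi$, the derived counit of $(\LLO,\RLO)$, and the equivalence of \Cref{nonequi} are all natural. As an alternative to the first two steps one could instead invoke \Cref{gqc} to reduce to a global quotient $\G\simeq G\ltimes M$, apply \Cref{orbfldspc} and \Cref{fororbifolds} to get $(\RLO\circ\Repl_{\fib}\circ\lieli)(G\ltimes M)\simeq\LB_{G,V}M$ together with $|G\ltimes M|\simeq EG\times_GM$, and then run the same underlying-type computation for the semifree orthogonal space $\LB_{G,V}M$ — but that route bottlenecks at exactly the same fact about suspension spectra of orthogonal spaces.
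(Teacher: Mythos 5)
Your proposal is correct and follows essentially the same route as the paper: cofibrantly replace the orthogonal space, identify the underlying non-equivariant spectrum of the global suspension spectrum with $\Sigma^\infty_+$ of the evaluation at $\R^\infty$, and then use \Cref{WEbySch}, the Quillen equivalence of \Cref{LSOS} and \Cref{nonequi} to identify that underlying space with $|\G|$. The only difference is that the paper cites an external reference for the key fact $\UF(\Sigma^\infty_+ Y)\simeq\Sigma^\infty_+\bigl(Y(\R^\infty)\bigr)$ for cofibrant $Y$, whereas you supply the (correct) colimit-interchange argument on stable homotopy groups yourself.
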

\begin{proof}
	The functor $\Sigma_+^\infty$ is homotopical, so we can replace $\left(\RLO \circ\Repl_{\fib}\circ \lieli \right)(\G)$ cofibrantly. We may afterwards replace this orthogonal space by the underlying non-equivariant space of its evaluation at $\R^\infty$, see \cite[Prop. 5.14]{sch14}. But evaluation at $\R^\infty$ is derived equivalent to $L$ by \Cref{WEbySch} and the adjunction unit $\LLO \circ \RLO$ is derived equivalent to the identity. We obtain \begin{align*}
	\UF \left(\left(\Sigma_+^\infty \circ \RLO \circ \Repl_{\fib}\circ \lieli \right)(\G)\right) \cong{}& \Sigma_+^\infty \left(\left(\LLO \circ \Repl_{\cof} \circ \RLO \circ \Repl_{\fib}\circ \lieli \right)(\G)\right)^\tg \\
	\cong{}& \Sigma_+^\infty \left( \lieli  (\G) \right)^\tg  \cong \Sigma_+^\infty |\G|
	\end{align*} where the last isomorphism is established in \Cref{nonequi}.
\end{proof}
\begin{example}[Cohomology of the classifying space, Borel cohomology] 
	Let $E$ be a non-equivariant generalized cohomology theory. By Brown's representabiliy theorem, there is an orthogonal spectrum representing $E$ which we also denote by $E$.
	The forgetful functor $\UF \colon \GH\to\SH$ admits a right adjoint $b \colon \SH\to \GH$, see \cite[Thm. 4.5.1]{sch18}. Let $\G$ be an orbifold. By \Cref{nonequisp}, we obtain a chain of natural isomorphisms \begin{align*} (bE)^k(\G) ={}& \GH\left(\left( \Sigma_+^\infty\circ \RLO\circ\Repl_{\fib}\circ \lieli\right)(\G) ,bE[k] \right) \\ \cong{}& \SH \left( \UF \left( \left( \Sigma_+^\infty\circ \RLO\circ\Repl_{\fib}\circ \lieli\right)(\G) \right) , E[k] \right) \\ \cong{}& \SH\left(\Sigma_+^\infty |\G|, E[k]\right) = E^k(|\G|) \,. \end{align*}
	We did moreover use that the functor $b$ commutes with the respective shift functors in $\SH$ and $\GH$ because its adjoint $\UF$ does so. 
	We just showed that the orbifold cohomology theory represented by $bE$ can be identified with the cohomology represented by $E$ of the classifying space. For $G \ltimes M$ a global quotient orbifold, we have $|G \ltimes M| \cong M \times_G \EC G$. Therefore, $bE$ represents the equivariant Borel cohomology represented by $E$ for all global quotients orbifolds. 
	
	The functor $b \colon \SH \to \GH$ admits an explicit lift $\Sp \to \Sp$ which is described in \cite[Con. 4.5.21]{sch18}. 
\end{example}
\begin{example}[Bredon cohomology] \label{bredon} We briefly recall the definition of Bredon cohomology and refer the reader to \cite[Sec. 1.4]{blu17} for more details on Bredon cohomology. 
	Let $\Or{G}^{\fin}$ denote the orbit category with finite stabilizers of a compact Lie group $G$. Given a functor $F \colon \left(\Or{G}^{\fin}\right)^{\op} \to \Ab$ and a $G$-CW complex $A$, we define $C(A) \colon \left(\Or{G}^{\fin}\right)^{\op} \to \Cell(\Ab)$ by \[C_\ast(A)(G/H) = C^{\operatorname{cell}}_\ast (A^H) \] where $C^{\operatorname{cell}}_\ast(-)$ denotes the cellular chain complex. The \defn{Bredon cohomology} $H_G^\ast(A,F)$ of $A$ with coefficient system $F$ is defined as the cohomology of the chain complex $\Hom_{\Fun\left(\left(\Or{G}^{\fin}\right)^{\op},\Ab\right)}(C_\ast(A),F)$.
	
	Let $F$ be a global Mackey functor as defined in \cite[Sec. 4.2]{sch18}. By \cite[Thm. 4.4.9]{sch18}, there is an \defn{Eilenberg-MacLane spectrum} $HF$, i.e a spectrum with homotopy groups concentrated in degree~$0$, such that  $\pi^G_0(HF)\cong F(G)$ for all compact Lie groups $G$ with corresponding transfer and restriction maps. The underlying orthogonal $G$-spectrum $(HF)_G$ represents Bredon cohomology with coefficient system $F_G \colon \left(\Or{G}^{\fin}\right)^{\op} \to \Ab$, the underlying contravariant functor of the restriction of $F$ to a $G$-Mackey functor, see \cite[Ex. 3.2.16]{dhlps}.
	For $G \ltimes M$ a global qoutient orbifold, we obtain \[ HF^k(G \ltimes M) \cong HF_G^k(M) \cong H_G^k(M,F_G) \] by \Cref{cohomorbi}. We conclude that the spectrum $HF$ represents Bredon cohomology for global quotient orbifolds. This in particular proves that the Bredon cohomology of an orbifold is independent of the choice of the presentation as a global quotient orbifold. Adem and Ruan \cite[Sec. 3, Rem. 5.11]{ar03} proved this for the special case where $F=\RU_\Q$ is the rationalized representation functor, Pronk and Scull \cite[Prop. 5.11]{ps10} proved this for what they call an \emph{orbifold coefficient system}.
	%A particular example of a global Mackey functor includes the global complex representation functor which assigns the Grothendieck group of complex $G$-representations $\RU(G)$ to a compact Lie group $G$, compare \cite[Ex. 4.2.8(iv)]{sch18}.
	
	Moreover, Bredon cohomology extends to an actual cohomology theory on all orbifolds. 
\end{example}
\begin{construction}[Atiyah-Hirzebruch spectral sequence]
	The full subcategories of globally connective spectra and globally coconnective spectra form a $t$-structure on $\GH$, see \cite[Thm. 4.4.9]{sch18}. The heart of this $t$-structure consists of those spectra whose homotopy groups are concentrated in degree~$0$, i.e. Eilenberg-MacLane spectra. The functor $\underline \pi_0$ induces an equivalence between this heart and the category of global Mackey functors. 
	Note that this $t$-structure is induced by a $t$-model structure on $\Sp$ in the sense of Fausk and Isaksen \cite[Def. 4.1]{fi07}. They give a short argument why this is true for model categories of spectra which also applies in our case.
	
	Their paper \cite[Sec. 10]{fi07} explains how to construct an Atiyah-Hirzebruch spectral sequence from a $t$-model structure on a stable model category. We briefly recall the most important ideas and adjust them to our situation:
	There is an $(q-1)$-connective cover $Y_{\ge q}$ for any spectrum $Y$ and any integer $q$ and natural transformations $Y_{\ge q+1} \to Y_{\ge q}$ in $\GH$. The homotopy cofiber of this map is a shifted Eilenberg-MacLane spectrum of type $\underline \pi_q(Y)$. %Let $\tau_{\le n} \colon \GH \to \GH_{\le n}$ denote the $n$-th truncation functor, compare \cite[Rem. 4.4.11]{sch18}. 
	
	For $X$ another orthogonal spectrum, we apply $\GH(X,-[-p-q])$ to the distingushed triangle \[ \begin{tikzcd} Y_{\ge q+1} \ar[r] & Y_{\ge q} \ar[r] & \left(H \underline \pi_q(Y)\right)[q] \ar[r] & Y_{\ge q+1}[1] \end{tikzcd} \] to get an exact couple with $D^2_{p,q}=\GH(X,Y_{\ge q}[-p-q])$ and $E^2_{p,q}=\GH(X,\left(H \underline \pi_{q}(Y)\right)[-p])$.
\end{construction}
For $X=\left(\Sigma_+^\infty \circ R \circ \Repl_{\fib} \circ \lieli\right)(\G)$, we obtain the following:
\begin{proposition}
	Let $Y$ be an orthogonal spectrum and let $\G$ be an orbifold. There is a spectral sequence with $E^2_{p,q}=(H\underline \pi_q(Y))^{-p}(\G)$ -- Bredon cohomology of $\G$ with coefficient system $\underline \pi_q(Y)$ -- which conditionally converges to $Y^{p+q}(\G)$. 
\end{proposition}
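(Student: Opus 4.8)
The plan is to apply the Atiyah--Hirzebruch construction recalled just above to the single orthogonal spectrum
\[X=\left(\Sigma^\infty_+ \circ \RLO \circ \Repl_{\fib} \circ \lieli\right)(\G),\]
which by the definition of orbifold cohomology satisfies $\GH(X,E[k])=E^k(\G)$ for every orthogonal spectrum $E$ and every integer $k$. Applying the homological functor $\GH(X,-[-p-q])$ to the distinguished triangles $Y_{\ge q+1}\to Y_{\ge q}\to \left(H\underline\pi_q(Y)\right)[q]\to Y_{\ge q+1}[1]$ in $\GH$ produces an exact couple with $D^2_{p,q}=\GH(X,Y_{\ge q}[-p-q])$ and $E^2_{p,q}=\GH(X,(H\underline\pi_q(Y))[-p])$, and hence a spectral sequence. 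So the remaining work is to rename the two relevant groups and to discuss convergence.

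First I would identify the $E^2$-page. By the definition of orbifold cohomology, $\GH(X,(H\underline\pi_q(Y))[-p])=(H\underline\pi_q(Y))^{-p}(\G)$. By \Cref{bredon}, the spectrum $H\underline\pi_q(Y)$ is the global Eilenberg--MacLane spectrum of the global Mackey functor $\underline\pi_q(Y)$, and the orbifold cohomology theory it represents is the Bredon cohomology of $\G$ with coefficient system $\underline\pi_q(Y)$ --- in agreement, by \Cref{cohomorbi}, with the classical group $H^{-p}_G(M;\underline\pi_q(Y)_G)$ for every presentation $\G\simeq G\ltimes M$ as a global quotient orbifold, and by definition of the extended theory in general. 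This gives the claimed $E^2$-term.

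It remains to settle convergence. The abutment furnished by the Fausk--Isaksen machinery (\cite[Sec. 10]{fi07}) is the group $\GH(X,Y[-p-q])=Y^{p+q}(\G)$, filtered by the images of $\GH(X,Y_{\ge q}[-p-q])$, that is, by the connectivity of the connective covers $Y_{\ge q}\to Y$. Conditional convergence in Boardman's sense follows once the tower of connective covers is complete: since $Y_{\ge q}=\fib(Y\to\tau_{<q}Y)$ and the Postnikov truncations $\tau_{<q}Y$ approximate $Y$ as $q\to\infty$, one has $\holim_{q\to\infty}Y_{\ge q}\simeq\ast$, and combining this with the Milnor $\lim^1$-sequence for the tower $(Y_{\ge q})_q$ tested against $X$ forces $\lim_q D^2_{\ast,q}=\lim\nolimits^1_q D^2_{\ast,q}=0$, which is exactly the hypothesis \cite[Sec. 10]{fi07} requires. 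I expect no serious obstacle here, since the functor $\lieli$, the comparison with equivariant cohomology (\Cref{cohomorbi}), the theory of global Eilenberg--MacLane spectra and Bredon cohomology (\Cref{bredon}), and the $t$-model structure on $\Sp$ are all already in place; the one point deserving genuine care is the identification of the $E^2$-term with Bredon cohomology for orbifolds that are \emph{not} global quotients, where \enquote{Bredon cohomology} must be read as the cohomology theory extended to all orbifolds via $\lieli$, rather than through an a priori chain-level definition.
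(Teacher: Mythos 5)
Your proposal follows essentially the same route as the paper: apply the Atiyah--Hirzebruch exact couple from the preceding construction to $X=\left(\Sigma_+^\infty \circ \RLO \circ \Repl_{\fib} \circ \lieli\right)(\G)$, identify the $E^2$-page via \Cref{bredon}, and deduce conditional convergence from Fausk--Isaksen together with $\holim_{q\to\infty} Y_{\ge q}\simeq \ast$, which both you and the paper ultimately justify by a Milnor $\lim^1$-sequence argument (the paper computes $\pi_k^G$ of the homotopy limit directly, using corepresentability of $\pi_0^G$ and the eventual vanishing of $\pi_k^G(Y_{\ge n})$; your version via convergence of the Postnikov tower is equivalent). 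The one hypothesis of \cite[Thm. 10.1]{fi07} you do not verify is that $X$ itself is bounded below; this is needed to identify the abutment with $\GH(X,Y[-p-q])=Y^{p+q}(\G)$ rather than merely with the colimit of the tower, and the paper checks it in one line: $X$ is a suspension spectrum, hence globally connective by \cite[Prop. 4.1.11]{sch18}. Adding that sentence closes the only gap.
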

Here, conditional convergences of spectral sequences is defined in Boardman's paper \cite[Def. 5.10]{boa99}. It also provides several criteria for deducing strong convergence from conditional convergence. 
\begin{proof}
	The statement follows from Fausk and Isaksen's theorem \cite[Thm. 10.1]{fi07}. We need to check that $\left(\Sigma_+^\infty \circ R \circ \Repl_{\fib} \circ \lieli\right)(\G)$ is bounded below and that $\holim_{n \to \infty} Y_{\ge n}$ is contractible. 
	
	The first condition holds true because suspension spectra are globally connective by \cite[Prop. 4.1.11]{sch18}. 
	
	Let $G$ be a compact Lie group and $k$ be an integer. The functor $\pi_0^G \colon \GH \to \Ab$ is corepresentable by \cite[Thm. 4.4.3(i)]{sch18}. We hence obtain a Milnor short exact sequence, see e.g. \cite[\href{https://stacks.math.columbia.edu/tag/0919}{Lem. 0919}]{stacks-project}: \[\begin{tikzcd}[sep=small]
	0 \ar[r] & \dlim_{n \to \infty} \pi^G_{k+1}(Y_{\ge n}) \ar[r] & \pi^G_k\left( \holim_{n \to \infty} Y_{\ge n} \right) \ar[r] & \lim_{n \to \infty} \pi_k^G(Y_{\ge n}) \ar[r] & 0
	\end{tikzcd} \] 
	%see e.g. \cite[Lem. 5.6]{bel00}. 
	The homotopy groups $\pi^G_{k}(Y_{\ge n})$ eventually vanish. The left and the right term of the sequence therefore vanish and so does the middle term. 
\end{proof}
\begin{example}[Orbifold $K$-theory] \label{ktheory}
	Firstly, we briefly discuss the geometric definition of orbifold $K$-theory via orbifold vector bundles. A full definition and proofs of the following statements can be found in \cite[Sec. 3.3]{alr07}. Let $\G$ be a compact orbifold, that is a proper foliation groupoid where $\G_0$ is compact. A \defn{$\G$-space} is a smooth manifold $M$ together with a smooth \defn{anchor map} $\alpha \colon M \to \G_0$ and a smooth \defn{action map}~$\mu \colon \G_1 \times_{\G_0} M \to M$ which satisfy the usual unitality and associativity conditions. 
	A $\G$-vector bundle is a $\G$-space $M$ where the anchor map $\alpha \colon M \to \G_0$ is a complex vector bundle such that the action of $\G$ is fiberwise linear. Similarly as for manifolds, one can define Whitney sums and tensor products of $\G$-vector bundles.
	
	The \defn{orbifold $K$-theory} $\KT_{\Orbfld}(\G)$ of $\G$ is defined as the Grothendieck ring of isomorphism classes of $\G$-vector bundles. This is a functorial construction (using pullbacks of vector bundles) and one can prove that every essential equivalence between orbifolds induces an isomorphism on these rings, compare \cite[Prop. 4.2]{ar03}. 
	%For an effective orbifold $\chi$ we define $\KT_{\Orbfld}(\chi)$ as the corresponding ring of the groupoid associated to $\chi$ as described in \Cref{grpdtoeff}.
	
	Moreover, the $K$-theory of a global quotient orbifold $G \ltimes M$ may be identified with the $G$-equivariant $K$-theory of $M$, i.e. there is an isomorphism \[\KT_{\Orbfld}(G \ltimes M) \cong \KT_G(M) \,, \] which is natural in $M$, see \cite[Prop. 3.6]{alr07}. 
	
	On the other hand, there is the \defn{periodic global $K$-theory spectrum} $\KU$ which represents $K$-theory in the following sense: For every finite $G$-CW complex $A$, there is a natural isomorphism \[\KT_G(A) \cong \KU_G^0(A) \,. \] This applies when $A=M$ is a compact manifold together with a $G$-action on $M$. The definition of $\KU$ is due to Joachim \cite{joa04}. A self-contained exposition from a global perspective can be found in \cite[Sec. 6.4]{sch18}. 
	
	By also using \Cref{cohom}, we finally obtain an isomorphism \[\KT_{\Orbfld}(G \ltimes M) \cong \KU^0(G \ltimes M) \,. \] We conclude that the ring spectrum $\KU$ represents orbifold $K$-theory for compact global quotient orbifolds. Moreover, both sides are invariant under essential equivalences of orbifolds. We conclude hat $\KU$ represents orbifold cohomology for all compact orbifolds by \Cref{gqc}.
	
	We want to point out that it is however not clear that this isomorphism is natural with respect to all maps of orbifolds. 
\end{example}
\begin{example}[Rationalized $K$-theory]
	The rationalized $K$-theory is represented by the rationalized periodic global $K$-theory spectrum $\KU_\Q$. By \cite[Thm. 1.6]{wim18}, this spectrum decomposes up to $\fin$-global equivalence into a sum of shifted Eilenberg-MacLane spectra \[\KU_\Q \simeq \bigvee_{n \in \Z} H(\RU_\Q)[2n] \] which is enough for our purposes, see \Cref{fin}. This implies that there is an isomorphism \[\KU^0(\G) \otimes \Q \cong \bigoplus_{n \in \Z} (H(\RU_\Q))^{2n}(\G) \] between the rationalized $K$-theory and Bredon cohomology with coefficients system $\RU_\Q$ of an orbifold $\G$. For $\G=G \ltimes M$ a global quotient orbifold of a compact manifold $M$ under the action of a compact Lie group $G$, this rewrites as \[\KT_{\Orbfld}(G \ltimes M) \otimes \Q \cong \bigoplus_{n \in \Z} H_G^{2n}(M,\RU_\Q) \] by \Cref{ktheory} and \Cref{bredon}. This has been noted before by Adem and Ruan \cite[Rem. 5.11]{ar03}.
\end{example}
\printbibliography[heading=bibintoc]
\end{document}